\newtheorem{theorem}{Theorem}
\newtheorem{corollary}[theorem]{Corollary}
\newtheorem{lemma}[theorem]{Lemma}
\newtheorem{proposition}[theorem]{Proposition}
\theoremstyle{definition}
\newtheorem{notation}[theorem]{Notation}
\newtheorem{definition}[theorem]{Definition}
\newtheorem{remark}[theorem]{Remark}
\numberwithin{equation}{section}
\numberwithin{theorem}{section}
\DeclareMathOperator{\gid}{Id^{G,W}}
\DeclareMathOperator{\op}{op}
\DeclareMathOperator{\End}{End}
\DeclareMathOperator{\iden}{id}
\DeclareMathOperator{\gids}{Id^{\mathbb{Z}_2,W}}
\DeclareMathOperator{\IsupW}{Id^{\mathbb{Z}_2,W}}
\DeclareMathOperator{\VsupW}{var^{\mathbb{Z}_2,W}}
\newcommand{\gcn}{c_n^{G,W}}
\newcommand{\gcns}{c_n^{\mathbb{Z}_2,W}}
\newcommand{\mad}{\text{mod} \ }
\DeclareMathOperator{\id}{Id^{G}}
\DeclareMathOperator{\IsUT}{Id^{\mathbb{Z}_2,UT_2}}
\newcommand{\cher}{\textnormal{char}}
\DeclareMathOperator{\bi}{\mathcal{B}}
\newcommand{\W}{W \langle X \rangle}
\newcommand{\gpn}{P_n^{G,W}}
\newcommand{\gpi}{P^{G,W}_{I_1,\ldots,I_s}}
\DeclareMathOperator{\gexpG}{exp^{G,W}}
\DeclareMathOperator{\expG}{exp^G}
\newcommand{\gpr}{P^{G,W}_{n_1,\ldots,n_s}}
\newcommand{\gchi}{\chi^{\mathbb{Z}_2,W}_{r,n-r}}
\DeclareMathOperator{\spen}{span}
\title{Varieties of graded $W$-algebras and asymptotic behavior of codimension growth}
\author[G.~Busalacchi]{Giovanni Busalacchi}
\address{Giovanni~Busalacchi,  Dipartimento di Matematica e Informatica, Universit\`{a} degli Studi di Palermo, via
Archirafi 34, 90123, Palermo, Italy. {\em E-mail address}: {giovanni.busalacchi@unipa.it}}
\thanks{G. Busalacchi was supported by
Doctoral School on Mathematics and Computational Sciences, Universities of Messina, Catania and Palermo.}
\author[F.~Martino]{Fabrizio Martino}
\address{Fabrizio~Martino,  Dipartimento di Matematica e Informatica, Universit\`{a} degli Studi di Palermo, via
Archirafi 34, 90123, Palermo, Italy. {\em E-mail address}: {fabrizio.martino@unipa.it}}
\thanks{G.~Busalacchi and F.~Martino were partially supported by GNSAGA of INDAM}
\author[C.~Rizzo]{Carla Rizzo}
\address{Carla~Rizzo,  CMUC, Departamento de Matemática, Universidade de Coimbra, Largo D. Dinis, 3001-501, Coimbra, Portugal. {\em E-mail address}: {carlarizzo@mat.uc.pt}}
\thanks{C.~Rizzo was financially supported by the Fundação para a Ciência e a Tecnologia (Portuguese Foundation for Science and Technology) under the scope of the projects UID/00324/2025 (https://doi.org/10.54499/UID/00324/2025) (Centre for Mathematics of the University of Coimbra).}
\subjclass[2020]{Primary 16R10, 16R50, 16W50; Secondary 16P90, 16D20}
\keywords{graded $W$-algebras, multipliers, polynomial identity,  graded generalized polynomial identity, codimension growth}
\begin{document}

\begin{abstract}
    Let $W$ be a $G$-graded algebra over a field of characteristic zero, where $G$ is a finite group. We develope a theory of generalized $G$-graded polynomial identities satisfied by any finite-dimensional $W$-algebra $A$, by mean of the graded multiplier algebra of $A.$ In particular, we first prove that the graded generalized exponent exists and equals the ordinary one. Then, we explicitly compute the $G$-graded generalized identities of $UT_2,$ the $2 \times 2$ upper triangular matrix algebra equipped with its canonical $\mathbb{Z}_2$-grading, under all the possible graded $W$-actions. Finally, we exhibit examples of varieties of graded $W$-algebras with almost polynomial growth of the codimensions.
\end{abstract}

\maketitle

\section{Introduction}

Let $F$ be a field of characteristic zero and let $G$ be a finite group. Let $W$ be a unital $G$-graded algebra over $F$. An algebra $A$ over $F$ is said to be a $G$-graded $W$-algebra (or simply a \emph{graded $W$-algebra} when no ambiguity may arise) if $A$ is a graded $W$-bimodule satisfying certain additional conditions. For example, when $W = F$ this definition coincides with the standard notion of a graded $F$-algebra. In this paper, we are concerned with the polynomial identities satisfied by finite-dimensional graded $W$-algebras, with particular emphasis on the asymptotic growth of their $W$-codimension sequences.

The codimension sequence $c_n(A)$ was first introduced by Regev in \cite{Regev1972} in the context of associative algebras over fields of characteristic zero, providing a quantitative measure of the polynomial identities satisfied by a given algebra $A$. Subsequently, several properties of this sequence were established. In particular, if $A$ satisfies at least one nontrivial polynomial identity (i.e., if $A$ is a PI-algebra), then $c_n(A)$ is exponentially bounded (see \cite{Regev1972}). Moreover, as shown in \cite{Kemer1979}, its growth is either polynomial or exponential, with no intermediate behaviour. This dichotomy motivated the definition of \emph{varieties of almost polynomial growth}, namely varieties $\mathcal{V}$ whose codimension sequence grows exponentially but for which every proper subvariety $\mathcal{U} \subsetneq \mathcal{V}$ has polynomially bounded codimensions. One of the consequences of the results in \cite{Kemer1979} was the complete characterization of such varieties. Finally, the exponential growth rate of any PI-algebra was determined in \cite{GiambrunoZaicev1999}, where it was proved that the so-called PI-exponent exists and is a non-negative integer. Analogous developments were later obtained in the setting of group-graded algebras (see \cite{AG, GL, V}).

Motivated by these results, the authors of \cite{MR} carried out a detailed study of the $W$-variety generated by $UT_2$, the trivially graded algebra of $2 \times 2$ upper triangular matrices, endowed with all possible $W$-actions. Furthermore, in \cite{MR2} they established generalized versions of the aforementioned theorems for ordinary algebras. In particular, by employing the multiplier algebra--thus enabling them to refrain from specifying \emph{a priori} the acting algebra $W$ and to keep it formally defined--they classified the $W$-varieties of almost polynomial growth generated by finite-dimensional algebras and proved the existence of the PI-exponent when $W$ acts as a subalgebra of $A$ via left and right multiplication.

The aim of the present paper is to investigate the behaviour of codimension growth in the setting of $G$-graded $W$-algebras. The paper is organized as follows. Section~2 introduces the multiplier algebra $\mathcal{M}(A)$ of a graded $W$-algebra $A$, and shows that the given $G$-grading on $A$ extends naturally to $\mathcal{M}(A)$. We also establish a duality between the action of $W$ on $A$ and the corresponding action of $\mathcal{M}(A)$. Section~3 recalls the necessary background on generalized graded polynomial identities, including the definitions of the graded $W$-codimension sequence and the PI graded $W$-exponent. Section~4 proves the existence of the graded $W$-exponent and its equality with the ordinary graded PI-exponent. Sections~5 and~6 provide an extensive analysis of the graded $W$-variety generated by $UT_2$, including explicit computations of the ideals of identities, the codimension sequences, the exponents, and the graded $W$-cocharacters for all possible graded $W$-actions. Finally, Section~7 shows that the $W$-variety generated by $UT_2$ exhibits almost polynomial codimension growth whenever $W$ acts either as $F$ or as the subalgebra $D\subset UT_2$ of diagonal matrices via left and right multiplication.

\section{$W$-actions and $G$-graded algebras}
Throughout this paper, $F$ will denote a field of characteristic 0, $G=\{g_1=e_G,g_2,\ldots,g_s\}$ will be a finite group and all algebras will be associative algebras over $F$.

Let $A$ be an $F$-algebra. 
Recall that a $G$-grading on $A$ is a decomposition of $A$ into the direct sum of subspaces $A=\oplus_{g \in G}A^g$ such that $A^gA^h \subseteq A^{gh}$ for all $g,h \in G$. The subspaces $A^{g}$ are called the homogeneous components of $A$. Accordingly, an element $a \in A$ is homogeneous (or a homogeneous component of degree $g$) if $a \in A^{g}$ and in this case we write $|a|_A =g$ or simply $|a|=g$ if no ambiguity arises.  If $B$ is a subalgebra (ideal) of $A,$ then we say that $B$ is a graded subalgebra (ideal) if $B=\oplus_{g\in G} B^g$ where $B^g= B\cap A^g$ for all $g\in G.$

Given a $G$-graded algebra $W$ with unit $1_W$, we say that $A$ is a $G$-graded $W$-algebra if $A$ is a $G$-graded algebra and a $W$-bimodule, i.e., $A$ is both a left and a right $W$-module with the compatibility relation, for all $w_1,w_2\in W,$ $a\in A,$
\begin{equation}
\label{condizione 2.1}
    (w_1a)w_2=w_1(aw_2),
\end{equation}
that satisfies the following additional relations, for any $w \in W, a_1,a_2 \in A$,
\begin{equation}
\label{abw}
    (a_1a_2)w=a_1(a_2w),\ w(a_1a_2)=(wa_1)a_2,\  (a_1w)a_2=a_1(wa_2),
\end{equation}
and, for any $v \in W^{g},$ $ a \in A^h,$  $ h,g \in G$,
\begin{equation}
\label{Wgraded}
av \in A^{hg} \quad \textnormal{and} \quad  va \in A^{gh}. 
\end{equation}

 Let $\End_F(A)$ be the algebra of the endomorphisms of $A$ acting on the left of $A.$ The multiplication in $\End_F(A)$ is given by the usual composition of endomorphisms, written by juxtaposition. Moreover, we denote by $\End_F(A)^{\op}$  the opposite algebra of $\End_F(A)$,  which is the underlying vector space of $\End_F(A)$ endowed with the opposite product $\cdot^{\op}$ defined by $f_1\cdot^{\op} f_2:=f_2 f_1$ for all $f_1, f_2\in \End_F(A)$. 

 As in \cite{MR2}, we define the multipliers as follows: given $(R,L)\in\End_F(A)^{\op} \times \End_F(A),$ then $(R, L)$ is a multiplier of $A$ if for all $a,b \in A$, the following conditions hold
  \begin{align}
    R(ab) &=aR(b),  \label{condizione1} \\
    L(ab)& =L(a)b, \label{condizione2} \\
    R(a)b &=aL(b). \label{condzione3}
    \end{align}
For each $m\in A,$ the pair $(R_m,L_m)$, defined by $R_m(a)=am$ and $L_m(a)=ma$ for all $a\in A,$ is a multiplier of $A$, referred to as the inner multiplier of $A$ induced by the element $m$.  
    
We denote by $\mathcal{M}(A)$ the set of all multipliers of $A$.
This set naturally carries the structure of an 
$F$-algebra with unity, where the algebra operations are defined component-wise and the unit element is $(\iden_A, \iden_A),$
and it is called the multiplier algebra of $A$. Moreover, the set $\mathcal{IM}(A)$ of all inner multipliers of $A$ is a two-sided ideal of $\mathcal{M}(A)$ called inner multiplier ideal of $A$.

In what follows, we show that $\mathcal{M}(A)$ inherits a $G$-grading from that of $A$. To this end, we introduce the following notation.
For any $a \in A$, we write it in terms of its homogeneous components as $a = \sum_{g \in G} a^g$ with $a^g \in A^g$. For a fixed $h \in G$, we define the linear map $\pi_h: A \to A$ by
\begin{align*}
    \pi_h \left(\sum_{g \in G}a^g \right) = a^h,
\end{align*}
which is called  $h$-homogeneous linear projection, and as a consequence, we can write $a = \sum_{g \in G} \pi_g(a).$ Moreover, notice that for any $h,k\in G$ and $a,b\in A$ we have that
\begin{equation}\label{eq: prop 1 pi}
    \pi_k \pi_h(a)=\begin{cases}
        \pi_h(a) & \text{ if } k=h\\
        0 & \text{ otherwise}
    \end{cases}
\end{equation}
and, in case $a\in A^k$ or $b\in A^h,$  
\begin{equation}\label{eq: prop 2 pi}
    \pi_k(a)\pi_h(b)=\pi_{kh}(ab).
\end{equation}
With this notation in place, we introduce, for each $g \in G$ and for all $(R,L)\in\End_F(A)^{\op} \times \End_F(A),$ the following endomorphisms of $A$:
\begin{equation}\label{eq: R_g L_g}
    R_g =\sum_{h \in G}\pi_{hg}  R \pi_h \quad  \text{ and } \quad L_g =\sum_{h \in G}\pi_{gh}  L  \pi_h.
\end{equation}
\begin{lemma}
\label{multipliersgh}
  Let $g \in G$ and $(R,L) \in \mathcal{M}(A)$. Then $(R_g,L_g) \in \mathcal{M}(A)$.
\end{lemma}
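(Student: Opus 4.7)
The plan is to verify the three multiplier axioms \eqref{condizione1}, \eqref{condizione2}, \eqref{condzione3} for $(R_g,L_g)$. By linearity, it suffices to check them on homogeneous elements $a\in A^h$ and $b\in A^k$. On such elements the sums in \eqref{eq: R_g L_g} collapse via \eqref{eq: prop 1 pi}, giving the clean formulas $R_g(a)=\pi_{hg}(R(a))$ and $L_g(a)=\pi_{gh}(L(a))$, which will be the workhorses throughout.

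For \eqref{condizione1}, I would start from $ab\in A^{hk}$, so $R_g(ab)=\pi_{hkg}(R(ab))=\pi_{hkg}(aR(b))$ by \eqref{condizione1} applied to $(R,L)$. Writing $R(b)=\sum_l \pi_l(R(b))$ and using that $a\pi_l(R(b))\in A^{hl}$, the projection $\pi_{hkg}$ keeps only the summand with $l=kg$, yielding $R_g(ab)=a\pi_{kg}(R(b))=aR_g(b)$. The verification of \eqref{condizione2} is entirely symmetric: use $L(ab)=L(a)b$, decompose $L(a)$ into homogeneous pieces, and observe that $\pi_l(L(a))b\in A^{lk}$, so only $l=gh$ survives.

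The core step is \eqref{condzione3}. Here I would apply $R(a)b=aL(b)$ and then compare homogeneous components on both sides. The left-hand side decomposes as $\sum_l \pi_l(R(a))b$, with $\pi_l(R(a))b\in A^{lk}$, so its degree-$m$ component is $\pi_{mk^{-1}}(R(a))\cdot b$. The right-hand side decomposes as $\sum_l a\pi_l(L(b))$, with $a\pi_l(L(b))\in A^{hl}$, so its degree-$m$ component is $a\cdot\pi_{h^{-1}m}(L(b))$. Specializing to $m=hgk$ gives $\pi_{hg}(R(a))\cdot b = a\cdot\pi_{gk}(L(b))$, which is exactly $R_g(a)b=aL_g(b)$.

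The argument is essentially bookkeeping with the graded indices; the only subtlety is making the passage from $R(ab)=aR(b)$ (and analogues) to the $g$-shifted version, which is precisely what the identities \eqref{eq: prop 1 pi}--\eqref{eq: prop 2 pi} are designed to handle. No additional ingredients beyond the multiplier conditions for $(R,L)$ and the compatibility of $A$'s grading with its multiplication are needed.
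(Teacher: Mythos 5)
Your proof is correct and follows essentially the same route as the paper's: reduce to homogeneous elements, use \eqref{eq: prop 1 pi} to collapse $R_g$ and $L_g$ to single projections, and track homogeneous degrees through the multiplier identities. The only difference is one of presentation — the paper writes out only \eqref{condzione3} and declares \eqref{condizione1} and \eqref{condizione2} analogous, whereas you verify all three explicitly.
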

\begin{proof}
     We prove only the condition \eqref{condzione3} since the same argument works also for \eqref{condizione1} and \eqref{condizione2}. To this end, without loss of generality, we may assume that $a$ and $b$ are homogeneous, so let $a=\pi_{g_1}(a) \in A^{g_1}$ and $b=\pi_{g_2}(b) \in A^{g_2}.$ Then by using \eqref{condzione3}, \eqref{eq: prop 1 pi} and \eqref{eq: prop 2 pi} we have that
$$R_{g}(a)b=\left(\sum_{h \in G}\pi_{hg} R  \pi_h\pi_{g_1}(a) \right)\pi_{g_2}(b)=\pi_{g_1g}(R(a))\pi_{g_2}(b)=\pi_{g_1gg_2}(R(a)b)=\pi_{g_1gg_2}(aL(b)).$$
    On the other hand
    $$aL_g(b)=\pi_{g_1}(a) \left(\sum_{h \in G}\pi_{gh}  L  \pi_h\pi_{g_2}(b)\right)=\pi_{g_1}(a)\pi_{gg_2}(L(b))=\pi_{g_1gg_2}(aL(b)).$$
Then $R_g(a)b=aL_g(b)$.
\end{proof}

\begin{lemma}
\label{LemmatecnicopergradsuM(A)}
For all $(R,L) \in \mathcal{M}(A)$ we have that 
$$R=\sum_{g \in G}R_g \qquad \text{and} \qquad L=\sum_{g \in G}L_g.$$
\end{lemma}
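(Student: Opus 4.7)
The plan is a direct computation starting from the definitions in \eqref{eq: R_g L_g}. The key ingredient is that every element $a \in A$ admits the decomposition $a = \sum_{h \in G} \pi_h(a)$, so as operators on $A$ we have $\sum_{h \in G} \pi_h = \iden_A$.

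First I would handle the $R$-component. Substituting the definition and swapping the order of summation yields
\begin{equation*}
\sum_{g \in G} R_g \;=\; \sum_{g \in G} \sum_{h \in G} \pi_{hg}\, R\, \pi_h \;=\; \sum_{h \in G} \left( \sum_{g \in G} \pi_{hg} \right) R\, \pi_h .
\end{equation*}
For each fixed $h \in G$, the map $g \mapsto hg$ is a bijection of $G$ onto itself, so $\sum_{g \in G} \pi_{hg} = \sum_{k \in G} \pi_k = \iden_A$. Hence the above expression collapses to $\sum_{h \in G} R\, \pi_h = R \left( \sum_{h \in G} \pi_h \right) = R \cdot \iden_A = R$.

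The same argument works for $L$, using the bijection $g \mapsto gh$ instead of $g \mapsto hg$:
\begin{equation*}
\sum_{g \in G} L_g \;=\; \sum_{h \in G} \left( \sum_{g \in G} \pi_{gh} \right) L\, \pi_h \;=\; \sum_{h \in G} L\, \pi_h \;=\; L.
\end{equation*}

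There is no real obstacle here; the statement is essentially a bookkeeping identity that follows from $\sum_{h \in G} \pi_h = \iden_A$ together with the group re-indexing. The only point to be careful about is verifying that the sums on the right-hand sides of \eqref{eq: R_g L_g} can indeed be freely rearranged, which is immediate since $G$ is finite and everything is $F$-linear.
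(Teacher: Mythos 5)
Your proof is correct and follows essentially the same route as the paper: both arguments rest on $\sum_{h\in G}\pi_h=\iden_A$ together with the re-indexing $hG=G$ (respectively $Gh=G$), the only cosmetic difference being that the paper carries out the computation pointwise on an element $a\in A$ while you work directly at the operator level. No gaps.
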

\begin{proof}
We first show that $R = \sum_{g \in G} R_g$. To this end, let $a=\sum_{h\in G}\pi_h(a)\in A$ and remark that on one side
$$
R(a) = \sum_{g\in G}\pi_g R(a) = \sum_{g\in G}\pi_gR\left(\sum_{h\in G}\pi_h(a)\right) = \sum_{g,h\in G}\pi_gR\pi_h(a).
$$
On the other, since $hG= G$ we have
$$
\sum_{g\in G}R_g(a) = \sum_{g\in G}\left(\sum_{h\in G}\pi_{hg}R\pi_h(a)\right)=\sum_{h\in G}\left(\sum_{g\in G}\pi_{hg} R\pi_h(a)\right) = \sum_{g,h\in G}\pi_gR\pi_h(a),
$$
thus $R(a) = \sum_{g\in G} R_g(a)$ as claimed. Analogously, we obtain $L = \sum_{g \in G} L_g.$
\end{proof}

\begin{proposition}
\label{gradazionediW}
    Let $A$ be a $G$-graded algebra. Then the multiplier algebra $\mathcal{M}(A)$ of $A$ is graded as follows
    $$\mathcal{M}(A)=\bigoplus_{g \in G}\mathcal{M}(A)^g,$$
    where, for each $g\in G$,
    $$
     \mathcal{M}(A)^g=\{(R,L) \in \mathcal{M}(A)\ | \ R(A^h) \subseteq A^{hg} \ \text{and} \ L(A^h) \subseteq A^{gh} \ \textnormal{for all $h \in G$}\}.
    $$
\end{proposition}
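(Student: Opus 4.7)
The plan is to bootstrap the decomposition of $\mathcal{M}(A)$ directly from Lemmas~\ref{multipliersgh} and \ref{LemmatecnicopergradsuM(A)} and then verify the four standard conditions for a $G$-grading of an algebra: (i) each candidate piece lies in the putative homogeneous component, (ii) the homogeneous components span $\mathcal{M}(A)$, (iii) their sum is direct, and (iv) multiplication respects the grading.

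First, I would check that $(R_g,L_g)\in\mathcal{M}(A)^g$ for every $g\in G$ and every $(R,L)\in\mathcal{M}(A)$. Membership in $\mathcal{M}(A)$ is precisely Lemma~\ref{multipliersgh}. For the graded condition, take a homogeneous element $a=\pi_k(a)\in A^k$ and apply \eqref{eq: R_g L_g}: by \eqref{eq: prop 1 pi} only the summand with $h=k$ survives, giving $R_g(a)=\pi_{kg}(R(a))\in A^{kg}$ and analogously $L_g(a)=\pi_{gk}(L(a))\in A^{gk}$. Combining this with the pointwise decomposition $R=\sum_g R_g$, $L=\sum_g L_g$ of Lemma~\ref{LemmatecnicopergradsuM(A)} yields $\mathcal{M}(A)=\sum_{g\in G}\mathcal{M}(A)^g$.

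Next I would establish directness. Suppose $\sum_{g\in G}(R^{(g)},L^{(g)})=0$ with $(R^{(g)},L^{(g)})\in\mathcal{M}(A)^g$. Evaluating on an arbitrary homogeneous element $a\in A^h$ gives $\sum_g R^{(g)}(a)=0$ with $R^{(g)}(a)\in A^{hg}$. Since $g\mapsto hg$ is a bijection on $G$, the summands lie in pairwise distinct homogeneous components of $A$, so the decomposition $A=\bigoplus_{g}A^{hg}$ forces each $R^{(g)}(a)=0$. As $h$ and $a$ were arbitrary this gives $R^{(g)}=0$ for all $g$; the same argument applied to the second components yields $L^{(g)}=0$, so the sum is direct.

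Finally I would check compatibility with the multiplication. The product in $\mathcal{M}(A)\subseteq \End_F(A)^{\op}\times\End_F(A)$ is componentwise, so $(R_1,L_1)(R_2,L_2)=(R_2R_1,L_1L_2)$ because the first slot uses $\cdot^{\op}$. For $(R_1,L_1)\in\mathcal{M}(A)^g$, $(R_2,L_2)\in\mathcal{M}(A)^k$ and $a\in A^h$, successive applications give $R_2R_1(a)\in A^{(hg)k}=A^{h(gk)}$ and $L_1L_2(a)\in A^{g(kh)}=A^{(gk)h}$, so the product lies in $\mathcal{M}(A)^{gk}$.

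There is no serious obstacle in this argument; the only delicate point is keeping track of the side on which each projection is inserted and remembering that the first coordinate composes in the opposite order, but once the identities \eqref{eq: prop 1 pi} and \eqref{eq: prop 2 pi} are invoked the verification is essentially bookkeeping.
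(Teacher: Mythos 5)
Your proof is correct and follows essentially the same route as the paper: both use Lemma~\ref{multipliersgh} and Lemma~\ref{LemmatecnicopergradsuM(A)} to show the components $(R_g,L_g)$ lie in $\mathcal{M}(A)^g$ and span $\mathcal{M}(A)$, then argue directness from the decomposition $A=\bigoplus_h A^{h}$. Your version is, if anything, slightly more careful than the paper's: the directness is argued directly from a vanishing sum rather than by contradiction, and the compatibility $\mathcal{M}(A)^g\mathcal{M}(A)^k\subseteq\mathcal{M}(A)^{gk}$ (including the reversal of composition in the $\End_F(A)^{\op}$ coordinate) is verified explicitly where the paper declares it clear by construction.
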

\begin{proof}
Let $(R,L)\in \mathcal{M}(A)$ and $R_g$ and $L_g$ be the endomorphism of $A$ defined in \eqref{eq: R_g L_g}. Then by Lemma \ref{LemmatecnicopergradsuM(A)} and the definition of sum in $\mathcal{M}(A),$
it follows that $(R,L)=\sum_{g \in G} (R_g,L_g).$ Moreover, by Lemma \ref{multipliersgh} and using \eqref{eq: prop 1 pi} 
we get $(R_g,L_g) \in \mathcal{M}(A)^g$ and, as a consequence, we obtain 
$\mathcal{M}(A)=\sum_{g \in G}\mathcal{M}(A)^g$.

We now prove that this sum is direct. Suppose, by contradiction, that there exists $0\neq (R,L) \in \mathcal{M}(A)^g \cap \sum_{h\neq g}\mathcal{M}(A)^h$ for some $g\in G.$ Then there must exist $h_0\neq g$ such that $R(A^k) \subseteq A^{kg} \cap A^{kh_0} = 0$ for all $k\in G,$  which implies that $R=0.$ By a similar argument, it follows that $L=0$, and thus $(R, L) =0$, contradicting our assumption. Therefore, we have proved that $\mathcal{M}(A)$ is the direct sum of the subspaces $\mathcal{M}(A)^g$ for all $g\in G$. 

Finally, it is clear that by construction
\[
\mathcal{M}(A)^g \mathcal{M}(A)^h \subseteq \mathcal{M}(A)^{gh},
\]
for all $g,h\in G,$ and the statement is proved.
\end{proof}

\begin{corollary}
    If $A$ is a $G$-graded algebra, then $\mathcal{IM}(A)$ is a graded ideal of $\mathcal{M}(A).$ In particular, $\mathcal{IM}(A)=\bigoplus_{g\in G} \mathcal{IM}(A)^g $ with $\mathcal{IM}(A)^g=\{(R_m,L_m) \, | \, m\in A^g\}$ for all $g\in G$.
\end{corollary}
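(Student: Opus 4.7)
The plan is to leverage Proposition~\ref{gradazionediW}: since $\mathcal{IM}(A)$ is already known to be a two-sided ideal of $\mathcal{M}(A)$, it suffices to show that the decomposition $\mathcal{M}(A)=\bigoplus_{g\in G}\mathcal{M}(A)^g$ restricts to one of $\mathcal{IM}(A)$, and to identify the homogeneous pieces as $\{(R_m,L_m)\mid m\in A^g\}$.

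First I would check the inclusion $\{(R_m,L_m)\mid m\in A^g\}\subseteq \mathcal{IM}(A)\cap\mathcal{M}(A)^g$. This is immediate: for $m\in A^g$ and $a\in A^h$, the definition of the $G$-grading on $A$ gives $R_m(a)=am\in A^{hg}$ and $L_m(a)=ma\in A^{gh}$, so the description of $\mathcal{M}(A)^g$ in Proposition~\ref{gradazionediW} applies. Next, given an arbitrary inner multiplier $(R_m,L_m)\in\mathcal{IM}(A)$, I would decompose $m=\sum_{g\in G}\pi_g(m)$ into its homogeneous components and use $F$-linearity of $m\mapsto R_m$ and $m\mapsto L_m$ to write
\[
(R_m,L_m)=\sum_{g\in G}(R_{\pi_g(m)},L_{\pi_g(m)}),
\]
where each summand lies in $\{(R_n,L_n)\mid n\in A^g\}\subseteq \mathcal{M}(A)^g$ by the previous step. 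Hence $\mathcal{IM}(A)=\sum_{g\in G}\{(R_m,L_m)\mid m\in A^g\}$, and this sum is automatically direct because it sits inside the direct sum $\bigoplus_{g\in G}\mathcal{M}(A)^g$ supplied by Proposition~\ref{gradazionediW}.

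To finish, one checks $\mathcal{IM}(A)^g := \mathcal{IM}(A)\cap \mathcal{M}(A)^g=\{(R_m,L_m)\mid m\in A^g\}$: the $\supseteq$ direction is the opening step, and for $\subseteq$, any $(R_m,L_m)\in \mathcal{M}(A)^g$ satisfies, by uniqueness of the homogeneous decomposition, $(R_{\pi_h(m)},L_{\pi_h(m)})=0$ for $h\neq g$, so $(R_m,L_m)=(R_{\pi_g(m)},L_{\pi_g(m)})$ with $\pi_g(m)\in A^g$. I do not anticipate a real obstacle here; the only mild subtlety is that $m\mapsto(R_m,L_m)$ need not be injective, but that is harmless because the homogeneous component is described as a set of pairs, not parametrised by the $m$'s themselves.
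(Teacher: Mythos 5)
Your argument is correct and is essentially the proof the paper intends (the corollary is stated without proof as an immediate consequence of Proposition~\ref{gradazionediW}): decompose $m$ into homogeneous components, use linearity of $m\mapsto(R_m,L_m)$ to land each piece in $\mathcal{M}(A)^g$, and invoke the directness of $\bigoplus_{g\in G}\mathcal{M}(A)^g$ to conclude. Your closing remark about the non-injectivity of $m\mapsto(R_m,L_m)$ is a sensible precaution and does not affect the argument.
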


As in \cite{MR2}, now we define the acting homomorphism.
The right and left actions of the algebra $W$ on $A$ naturally define the following homomorphisms of $F$-algebras:
\begin{equation*} 
    \rho \colon W \to \End_F(A)^{\op} \qquad \text{and} \qquad \lambda \colon W \to \End_F(A)
\end{equation*}
such that $\rho(w)(a) = a w$ and $\lambda(w)(a) = w a$ for all $w \in W$ and $a \in A$.

Assume now that $A$ is a $G$-graded $W$-algebra. In this setting, $A$ admits both left and right graded $W$-module structures, and one may then consider the corresponding representations $\lambda$ and $\rho$. From identity~$\eqref{abw}$, it immediately follows that, for any $w \in W$,
\[
(\rho(w), \lambda(w)) \in \mathcal{M}(A).
\]
This leads to the definition of the following homomorphism of $F$-algebras
$$
\begin{aligned}
\Phi: W &\longrightarrow \mathcal{M}(A) \\
w &\longmapsto (\rho(w), \lambda(w))
\end{aligned}
$$
which is called the acting homomorphism of $W$ on $A$. Notice that, since \eqref{Wgraded} holds, $\Phi$ is a graded homomorphism; therefore, in particular, $\Phi(W)$ is a graded subalgebra of $\mathcal M(A).$ Moreover, for all $w\in W$ by~$\eqref{condizione 2.1},$ $\Phi(W)$ satisfies the following compatibility condition
\[
\rho(w_2)\lambda(w_1) = \lambda(w_1)\rho(w_2) 
\]
for all $ w_1, w_2 \in W$. As a consequence, $A$ becomes a $G$-graded $\Phi(W)$-algebra with the action given by:
\begin{equation}
\label{Actionofphi(W)onA}
(R,L) \cdot a = L(a) \qquad a \cdot (R,L) = R(a),
\end{equation}
for any $a \in A$ and for any $(R,L) \in \Phi(W)$.

\begin{proposition}
\label{pro: unital graded W-alg}
    Let $W$ be a $G$-graded $F$-algebra. If $A$ is a unital graded $W$-algebra, then the action of $W$ on $A$ is equivalent to the action of a suitable unital graded subalgebra $B$ of $A$ by left and right multiplication.
\end{proposition}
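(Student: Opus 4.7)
The approach I would take is to internalize the $W$-action on $A$ using the unit $1_A$. Since $A$ is unital as a graded algebra, a standard decomposition argument forces $1_A\in A^{e_G}$. I would then define
$$\phi:W\to A,\qquad \phi(w)=w\cdot 1_A,$$
and set $B:=\phi(W)$. The plan is to show that $\phi$ is a unital graded algebra homomorphism, so that $B$ is a unital graded subalgebra of $A$, and that the original $W$-action factors through $\phi$ and becomes left/right multiplication inside $A$ by elements of $B$.

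The first key step is a symmetry observation: applying the mixed identity $(a_1 w)a_2=a_1(wa_2)$ from \eqref{abw} with $a_1=a_2=1_A$ gives $w\cdot 1_A = 1_A\cdot w$, so $\phi(w)$ is the same whether produced via the left or right action. This freedom to slide $1_A$ across $w$ is what drives everything. The second key step is the multiplicativity of $\phi$: starting from $\phi(w_1 w_2)=w_1\cdot(w_2\cdot 1_A)$ (left-module associativity) and then applying $w(a_1 a_2)=(wa_1)a_2$ from \eqref{abw} with $a_1=1_A$ and $a_2=\phi(w_2)\in A$, one collapses this to $\phi(w_1)\phi(w_2)$. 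Unitality $\phi(1_W)=1_A$ is immediate, and \eqref{Wgraded} applied with $a=1_A\in A^{e_G}$ yields $\phi(W^g)\subseteq A^g$, so $B=\bigoplus_{g\in G}\phi(W^g)$ is a unital graded subalgebra of $A$.

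To finish, I would read off the equivalence of actions. For any $w\in W$ and $a\in A$, the relation $w(a_1 a_2)=(wa_1)a_2$ applied to $a_1=1_A$, $a_2=a$ gives
$$w\cdot a \;=\; w\cdot(1_A\cdot a) \;=\; (w\cdot 1_A)\,a \;=\; \phi(w)\,a,$$
and symmetrically $a\cdot w = a\,\phi(w)$. Thus the left and right $W$-actions on $A$ are precisely left and right multiplication by the corresponding element of $B$. In terms of the acting homomorphism constructed earlier, this amounts to $\Phi(w)=(R_{\phi(w)},L_{\phi(w)})\in\mathcal{IM}(A)$, so $\Phi$ factors through the canonical map $B\to\mathcal{IM}(A)$, $b\mapsto(R_b,L_b)$, and this is the sought equivalence.

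The main — and rather mild — obstacle I anticipate is notational hygiene: expressions such as $w\cdot 1_A$ sit ambiguously between the $W$-action symbol and the algebra product inside $A$, and the whole argument depends on applying the right axiom from \eqref{abw} at the right moment to turn one into the other. Once that bookkeeping is under control, the unit $1_A$ together with the compatibility relations in \eqref{abw}--\eqref{Wgraded} do all the work automatically, and there is no deeper structural difficulty.
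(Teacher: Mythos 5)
Your argument is correct, and it arrives at the same subalgebra $B$ as the paper, but by a genuinely more direct route. The paper first invokes \cite[Corollary 2.5]{MR2} to get $\mathcal{M}(A)=\mathcal{IM}(A)$ together with the isomorphism $\mu:A\to\mathcal{IM}(A)$, $m\mapsto(R_m,L_m)$, checks that $\mu$ is graded, and then defines $B$ as the preimage $\mu^{-1}(\Phi(W))$ of the image of the acting homomorphism; the internalization is thus delegated to the multiplier-algebra machinery. You instead build the internalizing map $\phi(w)=w\cdot 1_A$ by hand and verify multiplicativity, unitality, gradedness, and the identity $w\cdot a=\phi(w)a$, $a\cdot w=a\,\phi(w)$ directly from \eqref{abw} and \eqref{Wgraded}. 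Since $\mu^{-1}(\Phi(w))=\lambda(w)(1_A)=w\cdot 1_A$, the two constructions literally coincide; what the paper's detour buys is consistency with the framework it has already set up (the grading on $\mathcal{M}(A)$ and the factorization of every action through $\Phi$, which are reused later), while your version is shorter and self-contained, needing no external citation. Two points worth making explicit in a written-up version: that $1_A\in A^{e_G}$ (you flag this, and it is the standard support argument for graded unital algebras), and that the $W$-bimodule structure is unital, i.e.\ $1_W\cdot a=a=a\cdot 1_W$ --- this is implicitly assumed throughout the paper (its own proof uses $\Phi(1_W)=(R_{1_A},L_{1_A})$) and is exactly what makes $\phi(1_W)=1_A$ ``immediate.''
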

\begin{proof}
If $A$ has unity, then by \cite[Corollary 2.5]{MR2} follows that $\mathcal{M}(A)=\mathcal{IM}(A)$ and  $A\cong \mathcal{IM}(A)$ as $F$-algebras using the isomorphism of $F$-algebras $\mu: A \to \mathcal{IM}(A)$ such that
$
\mu(m) = (R_m, L_m)
$
for any $m\in A$.
Notice that if $m \in A^g$, then for any $a\in A^h$, $R_m(a) = a m \in A^{hg}$ and $L_m(a) = m a \in A^{gh}$, so $\mu(m) \in \mathcal{IM}(A)^g$. Then $\mu(A^g)\subseteq \mathcal{IM}(A)^g$ for all $g\in G$, and $\mu$ is a $G$-graded isomorphism.
Thus, $A\cong \mathcal{IM}(A)$ as  $G$-graded algebras.

Let $\Phi$ be the acting homomorphism of $W$ on $A$. Since both $W$ and $A$ are unital, we have that $\Phi(1_W)=(R_{1_A}, L_{1_A}).$  Now, if we consider $\Phi(W) \subseteq \mathcal{M}(A)=\mathcal{IM}(A)$, then the action of $W$ on $A$ is equivalent to the action of $\Phi(W)$ on $A$ via \eqref{Actionofphi(W)onA}. Therefore, since $\Phi$ is also a graded homomorphism, it follows that the unital graded algebra $\Phi(W)$ is, through $\mu$, isomorphic to a unital graded subalgebra $B$ of $A$ that acts on $A$ by right and left multiplication. 
\end{proof}

\section{On the free $G$-graded $W$-algebra}

In this section, we first construct the free $W$-algebra. Then, we define the generalized graded codimension sequence and present some basic results that will be useful in the next section, where we will prove the first main result of the paper.

Let $ X = \{x_1, x_2, \ldots \} $ be an infinite countable set and $ W\langle X \rangle $ be the free $W$-algebra. Recall that we set 
\begin{equation}
\label{1Wx}
1_Wx1_W=x
\end{equation}
for any $x \in X$.
Also, if we fix a a vector spaces basis  $\mathcal{B}_W = \{w_i\}_{i \in \mathcal{I}}$ of $W$, then a vector space basis of $ W\langle X \rangle $ consists of the following monomials
\[
w_{i_0}x_{j_1}w_{i_1}\cdots w_{i_{n-1}}x_{j_n}w_{i_n}, \quad n \geq 1,
\]  
where $ w_{i_0}, \ldots, w_{i_n} $ are from the basis $ \mathcal{B}_W $. The product of two monomials is given first by juxtaposition and then by multiplication in $W$, and the $W$-action is given by the multiplication of $W$.

Next, we introduce a grading on $W\langle X \rangle$. We write $X=\bigcup_{g \in G} X^g$, where $X^g=\{x_1^g,x_2^g,\ldots\}$ are disjoint sets and we are requiring that the variables in $X^g$ have homogeneous degree $g,$ for all $g\in G.$ If $w_{i_0},w_{i_1},\ldots,w_{i_t}$ are homogeneous elements of $W$, the homogeneous degree of a monomial $w_{i_0}x_{j_1}^{g_{k_1}}w_{i_1}\cdots w_{i_{t-1}}x_{j_n}^{g_{k_n}}w_{i_n}$ is 
$$
|w_{i_0}|\cdot g_{k_1}\cdot |w_{i_1}|\cdots | w_{i_{n-1}}|\cdot g_{k_n}\cdot |w_{i_n}|.
$$
If we denote by $W\langle X\rangle^g$ the subspace of $W\langle X \rangle$ generated by all the monomials having homogeneous degree $g,$ then $W\langle X \rangle^gW\langle X \rangle^h \subseteq W\langle X\rangle ^{gh}$, for every $g,h$ in $G$. Therefore, it follows that
\[
W\langle X \rangle =\bigoplus_{g \in G} W \langle X \rangle ^g
\]
is a $G$-grading on $\W.$ We denote such graded $W$-algebra as $W\langle X  \rangle^G$ and we call it free $G$-graded $W$-algebra of countable rank over $F$. As free algebra, $W\langle X\rangle^G $ has the following universal property: given any $G$-graded $W$-algebra $A$, then any set theoretical map $\psi: X \to A$ such that $\psi(X^g) \subseteq A^g$ extends uniquely to a $G$-graded $W$-homomorphism $\overline{\psi}: W\langle X \rangle^G \to A$.

An element $f \in W\langle X \rangle^G$ is called $G$-graded $W$-polynomial, or graded generalized polynomial if the roles of $W$ and $G$ are clear.  We say that $f \in W \langle X \rangle^G$ is $W$-trivial if $f = 0$; otherwise, we say $f$ is $W$-nontrivial.
By naturally extending the ordinary case (when $G$ is trivial and $W=F$), the total degree of a monomial $M$ in a variable $x \in X$, is defined as the number of times the variable $x$ appears in $M$ (regardless of the exponent in $G$ and the elements of $W$).

Given a $G$-graded $W$-algebra $A$, an element $f=f(x_1^{g_1},\ldots,x_{t_1}^{g_1},\ldots,x_1^{g_s},\ldots,x_{t_k}^{g_s})$ of $W\langle X \rangle ^G$ is called $G$-graded $W$-identity of $A$, or graded generalized identity (if the roles $W$ and $G$ are clear), and we write $f \equiv 0$, if $f$ vanishes under all evaluations $x_i^{g_j} \to a_{g_j,i} \in A^{g_j}, 1 \leq j \leq s$. The set of all graded generalized identities of $A$ is an ideal denoted by
 \[
 \gid(A)=\{f \in W\langle X \rangle^G \ |\ f \equiv 0 \ \text{on} \ A \}
 \]
  which is invariant under all $G$-graded $W$-endomorphisms of $W\langle X \rangle^G$. In this case, we say that $\gid(A)$ is a $T_{G,W}$-ideal.
For any $n \geq 1$ we define
\[
 \gpn=\spen_F \{w_{i_0}x_{\sigma(1)}^{g_{j_1}}w_{i_1}\cdots w_{i_{n-1}}x_{\sigma(n)}^{g_{j_n}}w_{i_n} \ | \ \sigma \in S_n,\ g_{j_1},\ldots, g_{j_n} \in G,\ w_{i_0},\ldots,w_{i_n} \in \bi_W\}
 \]
 the space of multilinear $G$-graded generalized polynomials in the graded variables $x_1^{g_{j_1}},\ldots,x_{n}^{g_{j_n}}, g_{i_l} \in G$. Here $S_n$ stands for the symmetric group of order $n$. Since the characteristic of the base field is zero, by standard arguments, we have that the $G$-graded $W$-identities follow from the multilinear ones, and we study
 \[
 \gpn(A)=\frac{\gpn}{\gpn \cap \gid(A)}.
 \]
 The dimension
 \[
 \gcn(A)=\dim_F \gpn(A), \quad n \geq 1,
 \]
 is called the $n$th $G$-graded $W$-codimension, or graded generalized codimension, of $A$ and, roughly speaking, it gives an idea of how many  generalized polynomial identities are satisfied by $A.$

 \smallskip

  Recall that the graded codimension sequence, in the case of ordinary polynomial identities, is defined similarly. More in detail, let $F\langle X \rangle^G$ be the ordinary free $G$-graded algebra and $\id(A)=\{f \in F\langle X \rangle^G | f \equiv 0 \ \text{on} \ A\}$ be the $T_G$-ideal of ordinary $G$-graded polynomial identities of $A$. If $P_n^G$ is the space of the ordinary multilinear $G$-graded polynomials in the variables $x_1^{g_{j_1}},\ldots,x_{n}^{g_{j_n}},$ $ g_{j_l} \in G$, then $c_n^G(A)=\dim_F \frac{P_n^G}{P_n^G \cap \id(A)}$ is the $n$th ordinary graded codimension of $A$.

   If we consider $W\cong F$ then $\gpn = P_n^G$. Hence, in general, since $F \cong F1_W$ is a subalgebra of $W$, we have that $P_n^G \subseteq \gpn$. Therefore, if $A$ is a $G$-graded $W$-algebra, by following the argument of \cite[Lemma 10.1.2]{GZ} with the necessary modifications, we obtain the following relation between the ordinary graded codimensions and the generalized graded codimensions:
    \begin{equation} \label{lower}
        c_n^G(A) \leq \gcn(A), \quad n \geq 1.
    \end{equation}

 In \cite{AG,GL} the authors proved that if $A$ is a $G$-graded PI-algebra, i.e., if $A$ is a $G$-graded algebra that satisfies a non-trivial $G$-graded identity, then the limit $$\expG(A):=\lim_{n \to \infty} \sqrt[n]{c_n^G(A)}$$ 
 exists and is an integer called the $G$-graded exponent of $A$. Similarly, we define
 $$\gexpG(A):= \lim_{n \to \infty} \sqrt[n]{\gcn(A)}.$$
 One of our main goals will be to prove that this limit exists and coincides with $\expG(A)$ when $A$ is a finite-dimensional $G$-graded $W$-algebra.

 \smallskip

In the last part of this section we find a formula that simplifies the computation of the generalized graded codimension sequence.  To this end, for $n \geq 1$, consider  $ 0\leq n_1,\ldots,n_s\leq n$ such that $n_1+n_2+\cdots+n_s=n$ and
\begin{equation}
\label{LaformadelleI}
    I_1,\ldots,I_s \subseteq \{1,\ldots,n\}, \quad
    I_i \cap I_j = \emptyset \ \text{ for all } i \neq j,\quad
    |I_i|=n_i.
\end{equation}
If $I_1=\{i_1,\ldots,i_{n_1}\},\ I_2=\{i_{n_1+1},\ldots,i_{n_1+n_2}\}, \ldots,\ I_s=\{i_{n_1+\cdots+n_{s-1}+1},\ldots,i_{n_1+\cdots+n_s}\}$, we denote by $\gpi$ the vector subspace of $\gpn$ of multilinear graded generalized polynomials in the variables $x_{i_1}^{g_1},\ldots,x_{i_{n_1}}^{g_1},$ 
 $x_{i_{n_1+1}}^{g_2},\ldots, x_{i_{n_1+n_2}}^{g_2},\ldots, x_{i_{n_1+\cdots+n_{s-1}+1}}^{g_s},\ldots,x_{i_{n_1+\cdots+n_s}}^{g_s}$. Then it is clear that $\gpn$ has the decomposition
 \begin{equation}
    \label{nonso2}
\gpn = \bigoplus_{I_1,\ldots,I_s}\gpi,
\end{equation}
with $I_1,\ldots,I_s$ as in \eqref{LaformadelleI}.

Given a $G$-graded $W$-algebra $A$, if we consider $\gpr:= P^{G,W}_{\{1,\ldots,n_1\}, \ldots, \{n_1+\cdots + n_{s-1}+1, \ldots, n_1+\cdots+n_s\}}$, i.e., the vector subspace of $\gpn$ of multilinear generalized graded polynomials in the variables $x_{1}^{g_1},\ldots,x_{n_1}^{g_1},\ldots,$ $ x_{n_1+\cdots+n_{s-1}+1}^{g_s}, \ldots , x_{n_1+\cdots+n_s}^{g_s}$, then, clearly, 
    \begin{equation}
    \label{gpr}
    \frac{\gpi}{\gpi \cap \gid(A)} \cong \frac{\gpr}{\gpr \cap \gid(A)},
    \end{equation}
for all $I_1, \ldots, I_s$ as in \eqref{LaformadelleI}. 

For $n=n_1+ \cdots + n_s$, let us define
\[
c_{n_1,\ldots,n_s}^{G,W}(A):=\dim_F \frac{\gpr}{\gpr \cap \gid(A)}
\]
and denote by $\binom{n}{n_1,\ldots,n_s}=\frac{n!}{n_1! \cdots n_s!}$ the corresponding multinomial coefficient. Then we have the following result.
\begin{proposition}
\label{relg}
Let $A$ be a $W$-algebra over a field $F$ of characteristic zero. Then, for any $n \geq 1$,
$$
\gcn(A)=\sum_{\substack{n_1,\ldots,n_s\\ n_1+\cdots+n_s=n}}\binom{n}{n_1,\ldots,n_s}c_{n_1,\ldots,n_s}^{G,W}(A).
$$
\end{proposition}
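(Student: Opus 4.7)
The plan is to combine the direct sum decomposition \eqref{nonso2} with the isomorphism \eqref{gpr} and a combinatorial count of the number of ways to partition $\{1,\ldots,n\}$ into parts of prescribed sizes.

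First I would observe that the decomposition
$$\gpn = \bigoplus_{I_1,\ldots,I_s}\gpi$$
passes to the quotient by $\gid(A)$. Since each $\gpi$ is spanned by monomials in which the variables have a specific, fixed assignment of homogeneous degrees (the variable indexed by $j\in I_k$ carries the label $g_k$), and since any multilinear graded generalized polynomial is uniquely a sum of polynomials of these fixed ``types'', we obtain
$$\gpn\cap\gid(A)\;=\;\bigoplus_{I_1,\ldots,I_s}\bigl(\gpi\cap\gid(A)\bigr),$$
because a sum $f=\sum f_{I_1,\ldots,I_s}$ with $f_{I_1,\ldots,I_s}\in\gpi$ vanishes under every graded evaluation if and only if each summand does (the evaluations respect the type, the variables $x_j^{g_k}$ being substituted only by elements of $A^{g_k}$). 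Consequently
$$\gpn(A)\;\cong\;\bigoplus_{I_1,\ldots,I_s}\frac{\gpi}{\gpi\cap\gid(A)}.$$

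Next I would group the summands according to the sizes $n_i=|I_i|$. For a fixed tuple $(n_1,\ldots,n_s)$ of non-negative integers with $n_1+\cdots+n_s=n$, the number of ordered partitions $(I_1,\ldots,I_s)$ of $\{1,\ldots,n\}$ with $|I_i|=n_i$ is exactly the multinomial coefficient $\binom{n}{n_1,\ldots,n_s}$. Moreover, by \eqref{gpr}, each corresponding quotient $\gpi/(\gpi\cap\gid(A))$ is isomorphic to $\gpr/(\gpr\cap\gid(A))$, which by definition has dimension $c_{n_1,\ldots,n_s}^{G,W}(A)$.

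Taking dimensions on both sides then yields
$$\gcn(A)\;=\;\dim_F\gpn(A)\;=\;\sum_{\substack{n_1,\ldots,n_s\\ n_1+\cdots+n_s=n}}\binom{n}{n_1,\ldots,n_s}\,c_{n_1,\ldots,n_s}^{G,W}(A),$$
as claimed. There is no real obstacle: the only point requiring a line of justification is the compatibility of $\gid(A)$ with the direct sum decomposition, which is immediate from the fact that graded evaluations are ``type preserving'' and that multilinearity makes the argument fully transparent.
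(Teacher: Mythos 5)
Your proposal is correct and follows essentially the same route as the paper: decompose $\gpn$ as in \eqref{nonso2}, show the intersection with $\gid(A)$ respects this decomposition (the paper dispatches this with ``by standard arguments,'' while you spell out the type-preservation/multilinearity justification), and then count the $\binom{n}{n_1,\ldots,n_s}$ tuples $(I_1,\ldots,I_s)$ of fixed sizes using \eqref{gpr}. No gaps.
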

\begin{proof}
Let $f\in \gpn \cap \gid(A)$. By \eqref{nonso2} we can write $f=\sum_{I_1,\ldots,I_s}f_{I_1,\ldots,I_s}$,  with $I_1,\ldots,I_s$ as in \eqref{LaformadelleI} and $f_{I_1,\ldots,I_s}\in \gpi.$ Then by standard arguments it is clear that $f_{I_1,\ldots,I_s}\in \gid(A).$
Hence
\begin{equation}
\label{nonso}
\gpn \cap \gid(A) = \bigoplus_{I_1,\ldots,I_s} \Big(\gpi \cap \gid(A)\Big),
\end{equation}
with $I_1,\ldots,I_s$ as in \eqref{LaformadelleI}.
Combining \eqref{nonso2} and \eqref{nonso} we have that
\begin{equation}
\label{nonso1}
\frac{\gpn}{\gpn \cap \gid(A)} \cong \bigoplus_{I_1,\ldots,I_s}\frac{\gpi}{\gpi \cap \gid(A)}.
\end{equation}
Since for any partition $n_1,\ldots,n_s$ of $n,$ we have $\binom{n}{n_1,\ldots,n_s}$ possibilities of $I_1,\ldots,I_s$, the claim is proved using $\eqref{gpr}$ and $\eqref{nonso1}$.
\end{proof}

\section{Generalized $G$-graded $W$-exponent}

Throughout this section, $A$ will be a finite dimensional $G$-graded $W$-algebra.
In what follows, it will also be useful the following notation: if $B\subseteq A$, then 
\[
WB  W=\left\{\sum w_i b_k  w_j \ | \  b_k \in B, \ w_i,w_j \in W\right\}.
\]
Thus, an ideal (subalgebra) $I$ of $A$ is called $W$-ideal ($W$-subalgebra) if it is $W$-invariant, i.e., $WI W\subseteq I.$ Moreover, we say that $I$ is a $G$-graded $W$-ideal ($G$-graded $W$-subalgebra) if it is a $G$-graded ideal (subalgebra)  $W$-invariant.

By the Wedderburn-Malcev Theorem for finite dimensional $G$-graded algebras (see \cite{WM}), we write
\[
A=B+J
\]
where $B$ is a maximal semisimple subalgebra of $A$, which we may assume to be $G$-graded, and $J=J(A)$ is its Jacobson radical, which is a graded ideal of $A$. Moreover, if $F$ is algebraically closed, we can write 
$$
    B=B_1\oplus  \cdots \oplus B_t
$$
where $B_1,\ldots, B_t$ are simple $G$-graded algebras, i.e., they have no proper graded ideals. Notice that in such decomposition, the semisimple part is not in general $W$-invariant. In fact, for instance, if $W\cong A$ and the action is given by left and right multiplication, since $J$ is an ideal, we have that $J B,\ B J \subseteq J$. Nevertheless, the Jacobson radical is a $W$-invariant, as proved in \cite[Theorem 3.3]{MR2} which statement, in the setting of graded algebras, is the following.
    \begin{proposition}
    Let $W$ be a $G$-graded algebra over a field $F$ of characteristic zero and let $A$ be a $G$-graded finite dimensional $W$-algebra. Then the Jacobson radical $J$ of $A$ is a $G$-graded $W$-ideal of $A$. Moreover, if $F$ is algebraically closed, then $A=B_1\oplus \cdots \oplus B_t +J,$ where $B_1,\ldots, B_t$ are $G$-graded simple algebras and $W B_iW\subseteq B_i+J$ for all $1\leq i\leq t.$
    \end{proposition}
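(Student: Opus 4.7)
The plan is to transport the argument of \cite[Theorem 3.3]{MR2} to the graded setting, using the graded acting homomorphism $\Phi\colon W \to \mathcal{M}(A)$ constructed after Proposition~\ref{gradazionediW}. First I would recall that for a finite-dimensional $G$-graded associative algebra over a field of characteristic zero, the Jacobson radical coincides with the largest nilpotent two-sided ideal and is automatically a $G$-graded ideal. This settles the gradedness of $J$ independently of the $W$-structure.

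Next I would prove the $W$-invariance. The set $WJW$ is an ideal of $A$ thanks to the compatibility relations \eqref{abw}. A short induction, reassociating products such as $(w_1 j_1 w_2)(w_3 j_2 w_4)$ via the three identities in \eqref{abw} together with the bimodule axiom \eqref{condizione 2.1}, yields
\[
(WJW)^k \subseteq W J^k W
\]
for every $k \geq 1$. Since $J$ is nilpotent, so is $WJW$, and by maximality $WJW \subseteq J$. Because $\Phi$ is a graded homomorphism by \eqref{Wgraded}, the $W$-action respects the $G$-grading, so $J$ is in fact a graded $W$-ideal.

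For the decomposition part, apply the graded Wedderburn--Malcev theorem over the algebraically closed field $F$ to write $A = B + J$ with $B = B_1 \oplus \cdots \oplus B_t$ and each $B_i$ a graded simple finite-dimensional algebra, hence unital with a homogeneous unit $e_i \in B_i$. In $\bar A := A/J$ the images $\overline{e_i}$ are pairwise orthogonal central idempotents with $B_iB_j = 0$ for $i \neq j$. For $w\in W$ and $b \in B_i$, and for every $j \neq i$, the identities \eqref{abw} give
\[
\overline{wb}\cdot\overline{e_j} \;=\; \overline{(wb)e_j} \;=\; \overline{w(be_j)} \;=\; 0,
\]
so $\overline{wb} \in \overline{B_i}$, i.e.\ $wb \in B_i + J$; the symmetric computation yields $bw \in B_i + J$. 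Combining these with the $W$-invariance of $J$ already established gives $WB_iW \subseteq W(B_i + J) \subseteq B_i + J + WJW \subseteq B_i + J$.

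I expect the main technical obstacle to be the reassociation step $(WJW)^k \subseteq W J^k W$: one must carefully rewrite alternating strings of elements of $W$ and of $A$ into the form $w \cdot a \cdot w'$ by iterated application of \eqref{condizione 2.1} and \eqref{abw}, making sure that every intermediate $W$-factor can be absorbed into a neighbouring one until only one $W$-element is left on each side of a pure $J$-product. A secondary, essentially bookkeeping, point is to justify invoking graded Wedderburn--Malcev with algebraically closed $F$ and to record that the unit idempotents $e_i$ of the graded simple blocks lift to homogeneous elements in $B$, which is needed to make the centrality and orthogonality argument in $\bar A$ work cleanly with the grading.
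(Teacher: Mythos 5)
Your overall architecture (gradedness of $J$, then $W$-invariance, then the block decomposition via the central idempotents of $A/J$) matches what the paper intends: the paper simply imports \cite[Theorem~3.3]{MR2} together with the graded Wedderburn--Malcev theorem, and your treatment of the second half --- $\overline{wb}\cdot\overline{e_j}=\overline{w(be_j)}=0$ for $j\neq i$, hence $\overline{wb}\in\overline{B_i}$ and then $WB_iW\subseteq B_i+J$ --- is correct. The genuine problem is the step you yourself single out as the main obstacle: the inclusion $(WJW)^k\subseteq WJ^kW$ cannot be obtained by reassociation, and the induction is circular. Already for $k=2$, the relations \eqref{abw} give
\[
(w_1j_1w_2)(w_3j_2w_4)=w_1\bigl(j_1\,(w_2w_3)\,j_2\bigr)w_4,
\]
and the middle factor equals $(j_1v)j_2=j_1(vj_2)$ with $v=w_2w_3\in W$. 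This element lies in $(JW)J=J(WJ)\subseteq J\cdot A\subseteq J$, but to place it in $J^2$ you would need $JW\subseteq J$ (or $WJ\subseteq J$) --- which is exactly the $W$-invariance of $J$ you are trying to prove. No re-bracketing removes the interior $W$-factors, so nilpotency of $WJW$ cannot be derived this way.

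The invariance has to come from the semisimplicity of $A/J$ rather than from nilpotency. For a multiplier $(R,L)\in\mathcal{M}(A)$ and $j\in J$, the relation $R(a)b=aL(b)$ gives $A\,L(j)=R(A)\,j\subseteq Aj\subseteq J$; since $A/J$ is a finite-dimensional semisimple algebra, it is unital and has zero left annihilator, so $A\,L(j)\subseteq J$ forces $L(j)\in J$. Symmetrically, $R(j)\,A=j\,L(A)\subseteq jA\subseteq J$ gives $R(j)\in J$. Applying this to $\Phi(w)=(\rho(w),\lambda(w))$ yields $WJ\subseteq J$ and $JW\subseteq J$, hence $WJW\subseteq J$, and compatibility with the grading follows from \eqref{Wgraded} as you say. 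With this substitution the remainder of your argument goes through.
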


Since the main goal of this section is to prove the existence and the integrality of the generalized graded exponent, we give the following definition that one can find in \cite[Definition 2]{GL}.

\begin{definition}
    Let $A$ be a finite dimensional $G$-graded $W$-algebra such that $A=B_1 \oplus \cdots \oplus B_t + J$ where $B_1,\ldots, B_t$ are $G$-graded simple algebras. We say that a $G$-graded subalgebra $B_{i_1} \oplus \cdots \oplus B_{i_k}$, where $B_{i_1},\ldots,B_{i_k}$ are distinct simple $G$-graded algebras, is admissible if for some permutation $(l_1,\ldots,l_k)$ of $(i_1,\ldots,i_k)$ we have that $B_{l_1}JB_{l_2}J\cdots JB_{l_k}\neq 0$.
\end{definition}
In the same paper the authors showed that $\expG(A)$ exists and equals the maximal dimension of an admissible subalgebra (see \cite[Theorem 2]{GL}). Here, we want to prove that $\gexpG(A) =\expG(A).$ To this end, let us compute an upper bound for $\gcn(A).$

Recall that $G=\{g_1,\ldots, g_s\}.$ Moreover, from now on, we let $\bi_A=\bi_{g_1} \cup \cdots \cup \bi_{g_s}$ be a basis of $A$ such that the elements of $\bi_{g_l}$ have homogeneous degree $g_l$, $1\leq l \leq s$, and $\bi_{g_1}=\{a_1,\ldots,a_{r_1}\}, \ldots, \bi_{g_s}=\{a_{r_1+\cdots +r_{s-1}+1},\ldots,a_{r_1+\cdots+r_s}\}.$ Moreover, let $n \geq 1$ and $\xi_{i,j}$, $1 \leq i \leq n, \ 1 \leq j \leq  r_1+\cdots+r_s=\dim_F A$, be commutative variables. Then we define the generic elements
\begin{equation}
    \xi_i^{g_l}=\sum_{j=r_1+\cdots+r_{l-1}+1}^{r_1+\cdots+r_l} a_j \otimes \xi_{i,j}, \ 1 \le i \le n, \ 1 \le l \le s.
    \label{eq2024}
\end{equation}
Let $\mathcal{H}$ be the algebra generated by the elements in \eqref{eq2024}. Then $\mathcal{H}$ is a $G$-graded $W$-algebra, where $\xi_i^{g_l}$ has homogeneous degree $g_l$ and, for any $w\in W$,
\begin{flalign*}
    & w  \xi_i^{g_l}=\sum_{j=r_1+\cdots+r_{l-1}+1}^{r_1+\cdots+r_l} wa_j \otimes \xi_{i,j}, \ 1 \le i \le n, \ 1 \le l \le s. \\
    &\xi_i^{g_l} w=\sum_{j=r_1+\cdots+r_{l-1}+1}^{r_1+\cdots+r_l} a_jw \otimes \xi_{i,j}, \ 1 \le i \le n, \ 1 \le l \le s.
\end{flalign*}
Notice that $\mathcal{H} \subseteq A\otimes F[\xi_{i,j}].$

\begin{proposition}\label{pro: relative graded W-algebra}
The $G$-graded $W$-algebra $\mathcal{H}$ is isomorphic to the a relatively free $G$-graded $W$-algebra of $A$ in $n\cdot s$ graded generators. 
\end{proposition}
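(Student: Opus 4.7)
The plan is to invoke the universal property of the free $G$-graded $W$-algebra to define a graded $W$-homomorphism and then identify its kernel with the $T_{G,W}$-ideal of graded $W$-identities of $A$ in these finitely many variables. Concretely, let $X_{n,s} = \{x_i^{g_l} : 1 \le i \le n,\ 1 \le l \le s\}$ and let $W\langle X_{n,s}\rangle^G$ denote the free graded $W$-algebra on this finite set. The set-theoretic map $x_i^{g_l} \mapsto \xi_i^{g_l}$ preserves homogeneous degrees, so by the universal property it extends uniquely to a $G$-graded $W$-homomorphism
\[
\varphi \colon W\langle X_{n,s}\rangle^G \longrightarrow \mathcal{H}.
\]
Surjectivity of $\varphi$ is immediate since, by construction, the generic elements $\xi_i^{g_l}$ generate $\mathcal{H}$ as a $W$-algebra.

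The crux is therefore to show $\ker\varphi = \gid(A) \cap W\langle X_{n,s}\rangle^G$. The inclusion $\gid(A) \cap W\langle X_{n,s}\rangle^G \subseteq \ker\varphi$ goes as follows: if $f$ is a graded $W$-identity, expand $f(\xi_1^{g_1}, \ldots, \xi_n^{g_s})$ in $A \otimes F[\xi_{i,j}]$ as $\sum_{k} a_k \otimes p_k(\xi)$ where the $a_k$ range over the fixed basis $\mathcal{B}_A$. Since $F$ has characteristic zero, and hence is infinite, if some $p_k$ were nonzero as a polynomial, we could specialize each $\xi_{i,j}$ to a scalar $\beta_{i,j} \in F$ making that particular coefficient nonvanishing; but this specialization corresponds precisely to the graded evaluation $x_i^{g_l} \mapsto \sum_j \beta_{i,j} a_j \in A^{g_l}$, at which $f$ would be nonzero, contradicting $f \in \gid(A)$.

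Conversely, given $f \in \ker\varphi$, any graded evaluation $x_i^{g_l} \mapsto b_i^{g_l} \in A^{g_l}$ can be written as $b_i^{g_l} = \sum_j \beta_{i,j} a_j$ with $\beta_{i,j} \in F$ and $a_j$ of degree $g_l$; this evaluation is exactly the image of $f(\xi_1^{g_1}, \ldots)$ under the specialization $\xi_{i,j} \mapsto \beta_{i,j}$, which is zero. Hence $f \in \gid(A)$. Combining the two inclusions and passing to the quotient yields
\[
\mathcal{H} \cong \frac{W\langle X_{n,s}\rangle^G}{\gid(A) \cap W\langle X_{n,s}\rangle^G},
\]
which is by definition the relatively free $G$-graded $W$-algebra of $A$ in $n\cdot s$ graded generators.

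The only genuine subtlety, and thus the main obstacle, is justifying the implication ``$f(\xi_1^{g_1},\ldots)=0$ in $\mathcal{H}$ entails $f\in\gid(A)$'' uniformly in $f$ (regardless of multi-linearity or multi-homogeneity), which is handled cleanly by the infiniteness of $F$ together with the $F[\xi_{i,j}]$-freeness of $A\otimes F[\xi_{i,j}]$ on $\mathcal{B}_A$; once this observation is in hand, the rest of the argument is a direct application of the universal property and routine bookkeeping with the grading and $W$-action.
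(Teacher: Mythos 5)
Your proposal is correct and follows essentially the same route as the paper: both define the graded $W$-homomorphism onto $\mathcal{H}$ via the universal property and identify its kernel with $\gid(A)$ restricted to the $n\cdot s$ variables, with the converse inclusion handled by specializing the commuting variables $\xi_{i,j}$ to scalars. The only cosmetic difference is that where the paper cites the graded analogue of \cite[Lemma 1.4.2]{GZ} (identities of $A$ pass to $A\otimes_F C$) to get $\gid(A)\subseteq\ker\psi$, you inline the underlying infinite-field argument directly.
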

\begin{proof}
Let 
$$\psi: \ W\langle x_1^{g_1},\ldots,x_n^{g_1},\ldots,x_1^{g_s},\ldots,x_n^{g_s} \rangle^G \to \mathcal{H}$$
be the $G$-graded $W$-homomorphism induced by mapping $x_i^{g_j} \to \xi_i^{g_j}$, $g_j \in G$, $1 \leq i \leq n$. We will prove that $\ker \psi=\gid(A)$.

Remark that if $C$ is a commutative $F$-algebra, 
then $A \otimes_F C$ is also a $G$-graded $W$-algebra where the $W$-action is defined by
    \[
     w \Big(\sum_{i \in \mathcal{I}}(b_i \otimes c_i)\Big)= \sum_{i \in \mathcal{I}} wb_i \otimes c_i \qquad \textnormal{and} \qquad \Big(\sum_{i \in \mathcal{I}}(b_i \otimes c_i) \Big) w= \sum_{i \in \mathcal{I}} b_iw \otimes c_i,
    \]
    with $b_i \in A, c_i \in C,$ for all $i\in\mathcal{I},$ and the grading is given by setting
    $
    (A \otimes_F C)^g= A^g \otimes_F C,
    $
    for all $g\in G.$
Since the characteristic of $F$ is zero, following step by step \cite[Lemma 1.4.2]{GZ}, with the necessary changes, we have that if $f$ is a graded generalized identity for $A$, then $f$ is also a graded generalized identity for $A \otimes_F C$. As a consequence, $\mathcal{H} \subseteq A\otimes F[\xi_{i,j}]$ implies that $\gid(A)=\gid(A\otimes F[\xi_{i,j}]) \subseteq \gid(\mathcal{H})$. Thus $\gid(A)  \subseteq\ker\psi$. 

Suppose now that $g=g(x_1^{g_{1}},\ldots,x_n^{g_1}, \ldots, x_1^{g_s},\ldots,x_n^{g_s}) \in \ker \psi,$ i.e., $g(\xi_1^{g_1},\ldots,\xi_n^{g_1}, \ldots, \xi_1^{g_s},\ldots,\xi_n^{g_s})=0$ in $\mathcal{H}$, and let $b_1^{g_1},\ldots,b_n^{g_1}, \ldots, b_1^{g_s},\ldots,b_n^{g_s}$ be arbitrary elements of $A$ such that $b_i^{g_l} \in A^{g_l}$, $1 \leq i \leq n, \ 1\leq l\leq s$. Write each element $b_i^{g_l}$ as a linear combination of the basis $\bi_{g_l}=\{a_{r_1+\cdots +r_{l-1}+1},\ldots,a_{r_1+\cdots+r_l}\}$ of $A^{g_l}$ and consider 
$$
b_i^{g_l}=\sum_{j=r_1+\cdots+r_{l-1}+1}^{r_1+\cdots+r_l} \mu_{i,j} a_j , \quad 1 \le i \le n, \ 1 \le l \le s,
$$
with $\mu_{i,j} \in F$. Since $F[\xi_{i,j}]$ is the free commutative algebra of countable rank, any set-theorical map $\xi_{i,r_1+\cdots+r_{l-1}+q} \to \mu_{i,r_1+\cdots+r_{l-1}+q}$  extends to a homomophism $F[\xi_{i,j}] \to F$. Hence, due to the universal property of the tensor product, the map
\[
a \to a, \ a \in A,\quad \xi_{i,r_1+\cdots+r_{l-1}+q} \to \mu_{i,r_1+\cdots+r_{l-1}+q}, \ 1 \leq i \leq n,\ 1 \leq l \leq s, \ 1 \leq q \leq r_l,
\]
extends to a homomorphism $\phi: A \otimes F[\xi_{i,q}] \to A$ such that $\phi (\xi_i^{g_l})=b_i^{g_l}, \ 1 \leq i \leq n$. Hence
\begin{align*}
    0 &= \phi (g(\xi_1^{g_1},\ldots,\xi_n^{g_1}, \ldots, \xi_1^{g_s},\ldots,\xi_n^{g_s}))=g(\phi(\xi_1^{g_1}),\ldots,\phi(\xi_n^{g_1}),\ldots,\phi(\xi_1^{g_s}),\ldots,\phi(\xi_n^{g_s}))\\
    & = g(b_1^{g_1},\ldots,b_n^{g_1}, \ldots, b_1^{g_s},\ldots,b_n^{g_s}).
\end{align*}
Since $b_1^{g_1},\ldots ,b_n^{g_s}$ are homogeneous arbitrary elements of $A$, $g(x_1^{g_1},\ldots,x_n^{g_1}, \ldots, x_1^{g_s},\ldots,x_n^{g_s}) \equiv 0$ is a graded generalized identity of $A$ and $\ker \psi\subseteq\gid(A)$ follows. 
\end{proof}

As a consequence, we have that a graded generalized polynomial $f$ is a graded generalized identity of $A$ if and only if it vanishes on the generic elements $\xi_{i}^{g}$, $g\in G$. 
Next, we will use this fact to determine an upper bound for the graded generalized codimensions.

\begin{lemma}
\label{upper}
Let $A$ be a finite dimensional $G$-graded $W$-algebra over an algebraically closed field $F$. Then there exist constants $C, u$ such that
\[
\gcn(A) \le Cn^ud^n,
\]
where $d$ is the maximal dimension of an admissible $G$-graded subalgebra.
\end{lemma}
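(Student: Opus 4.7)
The plan is to adapt the standard ``generic elements'' counting argument (analogous to the ordinary graded case of \cite{GL} and the ordinary $W$-case of \cite{MR2}) to the $G$-graded $W$-setting. By Proposition~\ref{pro: relative graded W-algebra}, the multilinear quotient $P_n^{G,W}(A)/(P_n^{G,W}\cap \gid(A))$ is identified with the $F$-span of the evaluations of multilinear graded monomials on the generic elements $\xi_i^{g_l}$ inside $\mathcal H \subseteq A\otimes F[\xi_{i,j}]$. Thus it is enough to bound the dimension of this span by $Cn^u d^n$.

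First I would replace $W$ by $\Phi(W)\subseteq \mathcal M(A)$, which is finite dimensional because $A$ is. After this harmless reduction, I fix a finite homogeneous basis $\mathcal B_W=\{w_1,\ldots,w_m\}$ of $W$. Then I apply the Wedderburn--Malcev decomposition
\[
A=B_1\oplus\cdots\oplus B_t + J,
\]
where each $B_i$ is graded simple, $J$ is graded, $W$-invariant, and nilpotent of some index $q$, and the crucial compatibility $WB_iW\subseteq B_i+J$ holds. Fix a graded basis of $A$ which is the union of graded bases of each $B_i$ and of $J$, and expand each generic element $\xi_i^{g_l}$ as the corresponding linear combination. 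The evaluation of a multilinear monomial $w_{i_0}x_{\sigma(1)}^{g_{j_1}}w_{i_1}\cdots x_{\sigma(n)}^{g_{j_n}}w_{i_n}$ then becomes a sum of tensors of the form $(w_{i_0}a_{k_1}w_{i_1}\cdots a_{k_n}w_{i_n}) \otimes \xi_{\sigma(1),k_1}\cdots \xi_{\sigma(n),k_n}$, so the dimension we want to bound is at most the number of \emph{distinct nonzero} products in $A$ of this shape (times a factor accounting for the symmetric group, absorbed into the final $n^u$).

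The core counting step is to classify the nonzero products by the type (some $B_i$, or $J$) of each basis factor $a_{k_l}$. Using the invariance $WB_iW\subseteq B_i+J$, every left/right $W$-insertion around a factor $a_{k_l}\in B_i$ contributes a term in $B_i$ and a term in $J$; therefore each nonzero product decomposes into a sum of ``pure'' products in which each $a_{k_l}$ is effectively moved into a single block $B_i$ or into $J$. Since $J^q=0$, at most $q-1$ radical factors can appear, and by the usual block-zero relation $B_iB_j=0$ (for $i\neq j$), the distinct simple blocks visited in a nonzero product must, up to permutation, form an admissible subalgebra $B_{l_1}\oplus\cdots\oplus B_{l_k}$. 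For a fixed admissible subalgebra of dimension $d'\le d$ and a fixed distribution of the $n$ indices among its blocks (with at most $q-1$ slots reserved for the radical), the number of nonzero evaluations is bounded by
\[
\binom{n}{n_1,\ldots,n_k,n_{k+1}}(\dim B_{l_1})^{n_1}\cdots(\dim B_{l_k})^{n_k}(\dim J)^{n_{k+1}},
\]
with $n_{k+1}\le q-1$. Summing over $n_1+\cdots+n_k=n-n_{k+1}$ bounds this by $n^{q-1}(\dim J)^{q-1}(d')^n\le n^{q-1}(\dim J)^{q-1}d^n$. Summing further over the finitely many admissible subalgebras, the $m^{n+1}$ choices of $W$-insertions (which contribute only a fixed exponential factor but can be absorbed into a polynomial bound per admissible block once we note that the $W$'s can only decrease the number of essentially distinct products), and the $s^n$ choices of homogeneous degrees, we obtain $\gcn(A)\le Cn^u d^n$ for suitable constants $C,u$.

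The main obstacle is precisely the interaction between the $W$-insertions and the block decomposition: a priori an element $w\cdot b$ with $b\in B_i$ could sit outside $B_i$, ruining the admissible-block control on nonzero products. The property $WB_iW\subseteq B_i+J$ (established in the graded analogue of \cite[Theorem~3.3]{MR2} quoted in the previous section) is exactly what rescues the argument, reducing the $W$-weighted counting to the classical admissible-block counting at the cost of absorbing the bounded data coming from $W$ and from the grading into the polynomial factor $n^u$.
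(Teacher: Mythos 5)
Your overall strategy coincides with the paper's: identify the multilinear quotient with a span of evaluations on the generic elements of Proposition~\ref{pro: relative graded W-algebra}, use the graded Wedderburn--Malcev decomposition together with $WJW\subseteq J$ and $WB_iW\subseteq B_i+J$, classify the basis factors appearing in a nonzero product as radical or semisimple, force the semisimple blocks into an admissible subalgebra, and count via multinomial coefficients. The per-admissible-subalgebra estimate and the $n^{q-1}(\dim J)^{q-1}(d')^n$ bound are correct.

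The genuine gap is in the final assembly. You propose to sum, in addition, over ``the $m^{n+1}$ choices of $W$-insertions'' and ``the $s^n$ choices of homogeneous degrees''; taken literally these are multiplicative factors and the resulting bound is of order $n^u(msd)^n$, which is strictly larger than $Cn^ud^n$ and useless for the intended conclusion $\gexpG(A)=\expG(A)$. Neither factor should appear, and the assertion that they ``can be absorbed'' needs an actual mechanism, which the paper supplies: the dimension of the span is at most $\dim A$ times the number of \emph{distinct commutative monomials} $\xi_{1,j_1}\cdots\xi_{n,j_n}$ occurring with nonzero coefficient. The $W$-insertions and the permutation only affect which element of the fixed finite-dimensional space $A$ sits in the $A$-tensor factor, so they contribute the constant $\dim A$ rather than an exponential; and the homogeneous degree of each position is already determined by the basis element $a_{j_i}$, so the degree choices are already counted inside the factors $(\dim B_{l_j})^{n_j}$ and must not be counted a second time. (The paper in fact refines the count by degree, proving $c_{n_1,\ldots,n_s}^{G,W}(A)\le C n^u e_1^{n_1}\cdots e_s^{n_s}$ with $e_1+\cdots+e_s=d$ and recombining via Proposition~\ref{relg} and the multinomial theorem to land exactly on $d^n$; your degree-unrefined count would also work, but only after the spurious $s^n$ is deleted.) As written, the last step does not deliver the stated bound.
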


\begin{proof}
   Let $\psi$ be the homomorphism defined in Proposition \ref{pro: relative graded W-algebra}. Since $\ker \psi= \gid(A) ,$ it is clear that,
for any $n_1,\ldots,n_s \geq 0$ such that $n_1+\cdots+ n_s=n$,
$$
c_{n_1,\ldots,n_s}^{G,W}(A)= \dim_F \mathcal{P}_{n_1,\ldots,n_s}^{G,W},
$$
where 
\begin{align*}
    \mathcal{P}_{n_1,\ldots,n_s}^{G,W}=\spen_F\{ w_{q_0}\eta_{\sigma(1)}w_{q_1} \cdots w_{q_{n-1}}\eta_{\sigma(n)}w_{q_n}\ | \ & \sigma \in S_n,\  w_{q_0},\ldots,w_{q_n} \in \bi_W, \\ & \ \eta_i=\xi_i^{g_l} \ \text{ if } n_1+\cdots + n_{l-1}+1 \leq i \leq n_1+\cdots + n_l\},
\end{align*}
i.e, $
\mathcal{P}_{n_1,\ldots,n_s}^{G,W}
$ is the vector space of multilinear graded generalized polynomials in variables
$\xi_{1}^{g_1},\ldots,\xi_{n_1}^{g_1},$ 
 $\xi_{n_1+1}^{g_2},\ldots, \xi_{n_1+n_2}^{g_2}, \ldots , \xi_{n_1+\cdots+n_{s-1}+1}^{g_s},\ldots, \xi_{n_1+\cdots+n_s}^{g_s}$.
Thus, our first aim is to compute an upper bound of $\dim_F \mathcal{P}_{n_1,\ldots,n_s}^{G,W}$. 

Taking a monomial  $ w_{q_0}\eta_{\sigma(1)}w_{q_1} \cdots w_{q_{n-1}}\eta_{\sigma(n)}w_{q_n}$ in $\mathcal{P}_{n_1,\ldots,n_s}^{G,W}$, then, by means of the definition \eqref{eq2024}, we can write it as a linear combination of elements of the type $c\otimes \xi_{1,j_1} \cdots \xi_{n,j_{n}}$, where $c$ is an element of the above basis $\mathcal{B}_A$ of $A$. Thus we shall estimate $c_{n_1,\ldots,n_s}^{G,W}(A)$ through an estimate of the number of possible monomials $ \xi_{1,j_1} \cdots \xi_{n,j_{n}}$ which can appear as coefficients of $ w_{q_0}\eta_{\sigma(1)}w_{q_1} \cdots w_{q_{n-1}}\eta_{\sigma(n)}w_{q_n}$ in the chosen basis.

Write $A=B+J$ where $B$ is a maximal semisimple $G$-graded subalgebra of $A$ and $J=J(A)$ is its Jacobson radical.  Since $F$ is algebraically closed, the semisimple part decomposes as a direct sum of simple $G$-graded algebras: $B=B_1 \oplus \cdots \oplus B_t$.  Suppose, as we may, that the above basis $\mathcal{B}_A$ of $A$  is a basis of $J$ and a basis for each of the $B_i$.
Then, by abuse of notation, since each variable $\xi_{i,j}$ in \eqref{eq2024} is attached to a basis element of some homogeneous degree, we will say that $\xi_{i,j}$ is a radical variable or a semisimple variable of some homogeneous degree.

Take a non-zero monomial $\Xi:= \xi_{1,j_1} \cdots \xi_{n,j_{n}}$ of degree $n$ that contains $i$ radical variables. 
Write $i =i_1+\cdots+i_s$ where $i_l\leq n_l$ is the number of radical variables of homogeneous degree $g_l$, for $1\leq l \leq s$. If $J^u \neq 0$ and $J^{u+1}=0$, then $\Xi$ is non-zero only if $i\leq u$ since $WJW\subseteq J$.
Thus the number of possible distributions of radical $i$ variables in a non-zero monomial is at most $\binom{n_1}{i_1}\cdots \binom{n_s}{i_s}(\dim J)^i\leq C_1 n^u$.
Now each such monomial involves $n-i$ semisimple variables with $ n_l - i_l$ variables of
homogeneous degree $g_l$, $1\leq l \leq s,$ which come from some distinct $G$-graded simple components $B_{m_1}, \ldots, B_{m_k}.$ Since $W B_k W\subseteq B_k+J$, this means that  there exists distincts $p_1, \ldots, p_q \in \{m_1,\ldots, m_k\}$ with $1\leq q \leq k$ such that the subalgebra $D=B_{p_1}\oplus \cdots \oplus B_{p_q}\subseteq B$ is admissible.

Now let $D$ be an admissible $G$-graded subalgebra of $B$ and let $d_1=| \bi_{g_1} \cap D|, d_2=|\bi_{g_2} \cap D|,\ldots, d_s=| \bi_{g_s} \cap D |$. If there are $i=i_1+\cdots+i_s$ radical variables, the number of non-zero monomials with semisimple variables coming from $D$ is at most $\binom{n_1}{i_1}\cdots \binom{n_s}{i_s}(\dim J)^i d_1^{n_1-i_i} \cdots d_s^{n_s-i_s}.$ Hence such admissible subalgebra $D$ may contribute with at most $C_2n^ud_1^{n_1}d_2^{n_2}\cdots d_s^{n_s}$ possible monomials, with $C_2$ a constant. 

Now, if $M$ is the number of admissible $G$-graded subalgebras of $B$ and $E$ is an admissible $G$-graded subalgebra of maximal dimension such that $e_1=|\mathcal{B}_1 \cap E|, \ e_2=|\mathcal{B}_2 \cap E|, \ldots, e_s=|\mathcal{B}_s \cap E|$, then we have that $e_1+\cdots+e_s=d$ and an upper bound for the number of possible non-zero monomials is $MC_2n^ue_1^{n_1}\cdots \ e_s^{n_s}$. Taking into account that we rewrote any product of $n$ basis elements of $A$ as a linear combination of basis elements, we get that
\[
c_{n_1,\ldots,n_s}^{G,W} (A) \le C_3n^ue_1^{n_1}\cdots e_s^{n_s},
\]
where $C_3$ is a constant. Thus, by Proposition \ref{relg} we have that 
\[
\gcn(A)= \sum_{\substack{n_1,\ldots,n_s \\ n_1+\cdots+n_s=n}}  \displaystyle \binom{n}{n_1,\ldots,n_s} c_{n_1,\ldots,n_s}^{G,W} (A) \le C_3n^u \sum_{\substack{n_1,\ldots,n_s\\n_1+\cdots+n_s=n}}\displaystyle \binom{n}{n_1,\ldots,n_s} e_1^{n_1}\cdots e_s^{n_s}=C_3n^ud^n,
\]
and the proof is complete.
\end{proof}

As in the ordinary case, following step by step \cite[Theorem 4.1.9]{GZ} with the necessary changes, we have that the codimensions do not change upon extension of the base field $F$ provided $F$ is infinite. So, combining \eqref{lower} and Lemma \ref{upper} with \cite[Theorem 2.3]{AG}, we have the following.
\begin{theorem}
Let $A$ be a finite dimensional $G$-graded $W$-algebra over a field $F$ of characteristic zero. Then there exists constants $C_1,C_2,r_1,r_2$ such that $C_1 \neq 0$ and 
\[
C_1n^{r_1}d^n \leq \gcn(A) \leq C_2n^{r_2}d^n,
\]
where $d=\exp^G(A)$ is the maximal dimension of an admisible $G$-graded subalgbera of $A$.
As a consequence the $G$-graded $W$-exponent $\gexpG(A)$ exists and is equal to $\exp^G(A).$ 
\end{theorem}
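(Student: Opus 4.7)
The plan is to sandwich $\gcn(A)$ between two expressions of the form $Cn^{r}d^{n}$ and then pass to the limit. First I would reduce to the case in which the base field $F$ is algebraically closed. This is needed because Lemma~\ref{upper} was proved under that hypothesis, and the ordinary graded result \cite[Theorem 2.3]{AG} is also typically stated there. To perform this reduction, I would follow the argument of \cite[Theorem 4.1.9]{GZ} verbatim, with the straightforward modifications that replace ordinary multilinear spaces by the graded generalized ones $\gpn$: extending scalars from $F$ to its algebraic closure $\overline{F}$ turns $A$ into a $G$-graded $W\otimes_F\overline{F}$-algebra, and since the characteristic is zero the dimension of $\gpn/(\gpn\cap\gid(A))$ is unchanged. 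The only point requiring care is checking that the universal property of the free $G$-graded $W$-algebra is compatible with scalar extension, which is routine.

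Once $F$ is algebraically closed, the upper bound is immediate: Lemma~\ref{upper} gives constants $C_2,r_2$ with
\[
\gcn(A)\le C_2\,n^{r_2}d^{n},
\]
where $d$ is the maximal dimension of an admissible $G$-graded subalgebra of $A$. For the lower bound I would invoke the inequality \eqref{lower}, namely $c_n^{G}(A)\le \gcn(A)$, together with \cite[Theorem 2.3]{AG} applied to the ordinary $G$-graded algebra $A$. That theorem furnishes constants $C_1\neq 0$ and $r_1$ such that $c_n^{G}(A)\ge C_1 n^{r_1}\,(\exp^{G}(A))^{n}$, and by \cite[Theorem 2]{GL} the integer $\exp^{G}(A)$ coincides with the maximal dimension $d$ of an admissible $G$-graded subalgebra. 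Combining these yields $C_1 n^{r_1}d^{n}\le \gcn(A)$.

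Putting the two bounds together produces exactly the double inequality in the statement. Taking $n$-th roots gives $\sqrt[n]{C_1 n^{r_1}}\,d\le \sqrt[n]{\gcn(A)}\le \sqrt[n]{C_2 n^{r_2}}\,d$, and since both outer factors tend to $d$ as $n\to\infty$, the limit $\gexpG(A)=\lim_n\sqrt[n]{\gcn(A)}$ exists and equals $d=\exp^{G}(A)$.

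I do not expect any serious obstacle, since all of the heavy lifting has already been done: Lemma~\ref{upper} handles the delicate combinatorial counting of non-zero evaluations via admissible subalgebras, and \cite[Theorem 2.3]{AG} supplies the matching lower bound in the ordinary graded setting. The only mildly subtle point is verifying the field-extension step, in particular that admissible $G$-graded subalgebras and the Wedderburn--Malcev decomposition behave well under extending $F$ to $\overline{F}$; this follows from the $G$-graded Wedderburn--Malcev theorem used earlier in the section and the fact that $d$ is defined in terms of a geometric quantity (non-vanishing of products $B_{l_1}JB_{l_2}\cdots JB_{l_k}$) preserved under base change.
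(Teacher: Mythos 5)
Your proposal matches the paper's proof essentially step for step: reduce to an algebraically closed field via the codimension-invariance argument of \cite[Theorem 4.1.9]{GZ}, take the upper bound from Lemma~\ref{upper}, obtain the lower bound by combining \eqref{lower} with \cite[Theorem 2.3]{AG} and the identification of $\exp^G(A)$ with the maximal dimension of an admissible graded subalgebra from \cite{GL}, and conclude by taking $n$-th roots. The argument is correct and no further comment is needed.
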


\section{Generalized graded identities of $UT_2(F)$}
From now on, we carry out an exhaustive study of the generalized graded polynomial identities of $UT_2:=UT_2(F)$, the algebra of $2 \times 2$ upper triangular matrices over a field $F$ of characteristic zero, together with the variety it generates. In particular, in this section we begin by classifying all possible graded $W$-actions on $UT_2$ and by computing the corresponding ideal of identities. As a consequence, we obtain the generalized graded codimension sequences and the generalized graded PI-exponents for each case under consideration.

Recall that, given a finite group $G$, the canonical grading on $UT_2$ is  defined as follows:
let $g\in G,$ $g\neq 1_G$, and set $UT_2=UT_2^1 \oplus UT_2^g$ 
where $UT_2^{1}=F e_{11} \oplus F e_{22}$ and $UT_2^{g}=Fe_{12}$.
According to \cite[Theorem 1]{VA}, up to isomorphism, $UT_2$ admits only two distinct gradings, the trivial grading and the canonical one. This readily implies that we can regard $UT_2$ as a superalgebra, i.e., graded by $\mathbb Z_2$, the cyclic group of order two.

Hence, since in what follows we will work exclusively with superalgebras, we adopt the convention that variables $y$ represent elements of $X$ of homogeneous degree $0$,  while the variables $z$ correspond to those of homogeneous degree $1$.

As is well-known, studying $UT_2$ with the trivial grading is equivalent to study the ungraded case, which has been extensively discussed in \cite{MR2, MR}. So we assume that the algebra $UT_2$ is endowed with the canonical grading.

Since the unital subalgebras of $UT_2$ are isomorphic to either $F1_{UT_2}$ or $D=Fe_{11}\oplus Fe_{22}$ or $C=F 1_{UT_2}+ Fe_{12}$ or $UT_2$ itself (see \cite[Remark 5.1]{MR2}), and each of these are also $\mathbb{Z}_2$-graded subalgebras of $UT_2,$ it follows from Proposition~\ref{pro: unital graded W-alg} that the graded algebra $\Phi(W)$ must be isomorphic to one of $F,$ $D$, $C$, or $UT_2$.
Consequently, we can define four non-equivalent structures of $\mathbb{Z}_2$-graded $W$-algebras on $UT_2$, each determined by the choice of the graded subalgebra of $UT_2$ acting by left and right multiplication.
We will denote them by $UT_2^F, UT_2^D, UT_2^C$ and simply $UT_2$ depending on whether $\Phi(W)$ is isomorphich to $F$, $D$, $C$ or the full algebra $UT_2$, respectively.

\smallskip

We start by considering $\Phi(W)\cong UT_2$ and, for simplicity of notation, we let $W=UT_2$. In other words, we are considering $UT_2$ with the action of the whole algebra $UT_2$ on itself by left and right multiplication.

 We consider as a basis the set $\bi_{UT_2}=\{1 := e_{11}+e_{22},e_{22},e_{12}\}$ and we recall that 
\[
z_1z_2 \equiv 0 \quad \text{ and } \quad [y_1,y_2] \equiv 0  
\]
are ordinary $\mathbb{Z}_2$-graded identities of $UT_2$ and by \eqref{1Wx} are also $\mathbb{Z}_2$-graded generalized identities.

Now, following the same reasoning as in \cite[Proposition 2.2]{MR}, we determine under which condition a polynomial in $1$ variable is $UT_2$-trivial.

In particular, if $\sum_{i=1}^m L_{v_i}R_{u_i}=0$ where $L_{v_i}$ and $R_{u_i}$ denote the left and right multiplication by $v_i, u_i\in W$, then $\sum_{i=1}^m v_i y u_i$ and $\sum_{i=1}^m v_i z u_i$ are $W$-trivial graded generalized polynomials. Conversely, if both  $\sum_{i=1}^m v_i y u_i$ and $\sum_{i=1}^m v_i z u_i$ are $W$-trivial for some $v_i,u_i\in W$, then $\sum_{i=1}^m L_{v_i}R_{u_i}=0$.

   According to this, the polynomials $e_{22}y-ye_{22}$, $e_{22}z$ and $ze_{22}-z$  are $UT_2$-nontrivial while $e_{12}ye_{12}$ and $e_{12}ze_{12}$ are $UT_2$-trivial. 
Moreover, a straightforward computation shows that 
$ e_{22}y-ye_{22}, \ e_{22}z, \ ze_{22}-z\in  \gids(UT_2)$.
\begin{lemma}\label{lem: cosequence}
    The generalized graded polynomials $ z_1z_2, \ e_{12}yz, \ zye_{12},\ e_{12}z$ and $ze_{12}$ are consequences of $e_{22}z$ and $ze_{22}-z$.
\end{lemma}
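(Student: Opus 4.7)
The plan is to exhibit each of the five polynomials as an element of the $T_{\mathbb{Z}_2, W}$-ideal generated by $e_{22} z$ and $z e_{22} - z$, using only left/right multiplication in $W\langle X\rangle^{\mathbb{Z}_2}$ together with invariance under graded $W$-endomorphisms. Throughout, I would exploit the following relations in $W = UT_2$: $e_{12} = e_{12} e_{22} = e_{11} e_{12}$, $e_{11} = 1 - e_{22}$, and $e_{22} e_{12} = 0$.

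First I would derive $e_{12} z \equiv 0$ and $z e_{12} \equiv 0$ as one-sided $W$-multiples of the generators. For $e_{12} z$, the identity $e_{12} = e_{12} e_{22}$ in $W$ rewrites the polynomial as $e_{12} \cdot (e_{22} z)$, a left multiple of $e_{22} z$. For $z e_{12}$, a short calculation using $e_{22} e_{12} = 0$ gives
\[
(z e_{22} - z) e_{12} = z (e_{22} e_{12}) - z e_{12} = -z e_{12},
\]
so $z e_{12} = -(z e_{22} - z) e_{12}$, a right multiple of $z e_{22} - z$.

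Next, $z_1 z_2 \equiv 0$ is handled by using both generators at once. Writing $z_1 = z_1 e_{22} - (z_1 e_{22} - z_1)$ yields
\[
z_1 z_2 = z_1 (e_{22} z_2) - (z_1 e_{22} - z_1) z_2,
\]
a sum of a left multiple of $e_{22} z_2$ and a right multiple of $z_1 e_{22} - z_1$, both of which lie in the ideal. This step uses the associativity $(z_1 e_{22}) z_2 = z_1 (e_{22} z_2)$ from the bimodule axiom \eqref{abw}.

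Finally, $e_{12} y z$ and $z y e_{12}$ would be obtained by applying graded $W$-endomorphisms to the consequences already established. Since $yz$ and $zy$ are polynomials of $\mathbb{Z}_2$-degree $1$, the substitutions $z \mapsto y z'$ in $e_{12} z$ and $z \mapsto z' y$ in $z e_{12}$ are valid graded $W$-endomorphisms and produce $e_{12} y z' \equiv 0$ and $z' y e_{12} \equiv 0$, respectively. No step in the argument presents a genuine difficulty: the only real care required lies in verifying that each rewrite in the free $W$-algebra is legitimate under the multiplication rule of $W$ and the bimodule axioms, after which the proof is essentially routine bookkeeping.
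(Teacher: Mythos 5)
Your proof is correct and follows essentially the same route as the paper's: the same decomposition $z_1z_2 = z_1(e_{22}z_2) - (z_1e_{22}-z_1)z_2$, the same one-sided multiplications by $e_{12}$ (using $e_{12}e_{22}=e_{12}$ and $e_{22}e_{12}=0$), and the same graded endomorphisms $z\mapsto yz$ and $z\mapsto zy$ to obtain $e_{12}yz$ and $zye_{12}$. The only difference is that you correctly multiply $ze_{22}-z$ by $-e_{12}$ on the \emph{right}, where the paper's text says ``left'' (evidently a typo).
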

\begin{proof}
From $ze_{22}-z$ it follows that $z_1e_{22}z_2-z_1z_2$, then by $e_{22}z$ we obtain $z_1z_2$.
Now, by multiplying $e_{22}z $ by $e_{12}$ on the left we get $e_{12}z $. By multiplying $ze_{22}-z $ by $(-e_{12})$ on the left we get $ze_{12}$. 
    Finally, the $\mathbb{Z}_2$-graded $UT_2$-endomorphisms $\varphi_1$ and $\varphi_2$ of the free graded algebra $UT_2\langle Y,Z \rangle$ such that $\varphi_1(z)=yz$ and $\varphi_2(z)=zy$ transform the $\mathbb{Z}_2$-graded generalized polynomials $e_{12}z$ and $ze_{12}$ into $e_{12}yz$ and $zye_{12}$, respectively.
\end{proof} 

\begin{theorem}
\label{Tideale}
Let $UT_2$ be the $2\times 2$-upper triangular matrix $UT_2$-superalgebra with canonical grading, i.e., $UT_2^0= Fe_{11}\oplus Fe_{22},$ $UT_2^1= Fe_{12},$ and $UT_2$ acts on itself by left and right multiplication. 
Then
\begin{equation*}
\IsUT(UT_2)=\langle [y_1,y_2], \  e_{22}y-ye_{22}, \ ze_{22}-z, \ e_{22}z\rangle_{T_{\mathbb{Z}_2,UT_2}}
\end{equation*}
and $c_n^{\mathbb{Z}_2,UT_2}(UT_2)=2^{n-1}(n+2)+2$.
\end{theorem}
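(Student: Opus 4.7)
The inclusion $I := \langle [y_1,y_2],\,e_{22}y-ye_{22},\,ze_{22}-z,\,e_{22}z\rangle_{T_{\mathbb{Z}_2,UT_2}} \subseteq \IsUT(UT_2)$ has already been checked in the paragraphs preceding the statement. My plan for the reverse inclusion (and simultaneously for the codimension formula) is to exhibit explicit normal forms for a spanning set of each $\gpns/(\gpns\cap I)$, evaluate them on $UT_2$ to certify linear independence, and then sum via Proposition~\ref{relg}. As a preliminary I would strengthen the toolkit: multiplying $ze_{22}-z$ on the right by $e_{12}$ gives $ze_{12}\in I$, and substituting $z\mapsto e_{12}y$ (which is again homogeneous of degree $1$) then yields $e_{12}ye_{12}\in I$. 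By substitution of $y$ with any multilinear polynomial in the $y$-variables and by the bimodule closure of $I$, I obtain $e_{12}fe_{12},\,e_{12}fz,\,zfe_{12}\in I$ for every multilinear polynomial $f$ in $y$-variables.

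For the normal-form analysis, the identity $z_1z_2\in I$ gives $c^{\mathbb{Z}_2,UT_2}_{n_1,n_2}(UT_2)=0$ whenever $n_2\geq 2$. For $n_2\in\{0,1\}$ every multilinear monomial has the form $w_0 x_1 w_1\cdots x_n w_n$ with $w_i\in\{1,e_{22},e_{12}\}$. Combining $[y_i,y_j]\equiv 0$, $e_{22}y\equiv ye_{22}$, the arithmetic $e_{22}e_{12}=0$ and $e_{12}e_{22}=e_{12}$ in $UT_2$, and the extended consequences above, I would argue that: (i) any monomial containing two or more $e_{12}$ letters yields, after absorbing intermediate $e_{22}$'s and rearranging intervening $y$'s, a factor of the form $e_{12}fe_{12}\in I$; (ii) when $n_2=1$, any single $e_{12}$ yields, after the same absorption/rearrangement, a factor of the form $e_{12}fz$ or $zfe_{12}$ in $I$; (iii) any $e_{22}$ strictly to the left of some $e_{12}$ or of the $z$-variable slides right and annihilates the monomial, while any $e_{22}$ to the right of $e_{12}$ or $z$ is absorbed via $e_{12}e_{22}=e_{12}$ or $ze_{22}-z$. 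After these reductions, the surviving normal forms are: for $n_2=0$, the monomials $y_1\cdots y_{n_1}$, $y_1\cdots y_{n_1}e_{22}$, and $y_{\sigma(1)}\cdots y_{\sigma(k)}\,e_{12}\,y_{\sigma(k+1)}\cdots y_{\sigma(n_1)}$ indexed by a subset $S\subseteq\{1,\ldots,n_1\}$ placed to the left of the unique $e_{12}$ (totaling $2^{n_1}+2$); and for $n_2=1$, the monomials $y_{\sigma(1)}\cdots y_{\sigma(k)}\,z\,y_{\sigma(k+1)}\cdots y_{\sigma(n_1)}$ similarly indexed (totaling $2^{n_1}$). I expect step~(ii) to be the main obstacle, since an $e_{12}$ may appear far from the $z$ and one needs to show that the intervening block of $y$'s and $e_{22}$'s can always be reduced to a pure $y$-product, at which point $e_{12}fz\in I$ (or $zfe_{12}\in I$) applies.

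Linear independence then follows by evaluating generic elements $y_i\mapsto\alpha_ie_{11}+\beta_ie_{22}$ and $z\mapsto\gamma e_{12}$: the $e_{12}$-normal forms give $\bigl(\prod_{i\in S}\alpha_i\bigr)\bigl(\prod_{j\notin S}\beta_j\bigr)e_{12}$, multiplied by $\gamma$ when $n_2=1$, which are pairwise distinct monomials in the commuting variables, while $y_1\cdots y_{n_1}$ and $y_1\cdots y_{n_1}e_{22}$ span linearly independent directions in $Fe_{11}\oplus Fe_{22}$. Hence $c^{\mathbb{Z}_2,UT_2}_{n_1,0}(UT_2)=2^{n_1}+2$ and $c^{\mathbb{Z}_2,UT_2}_{n_1,1}(UT_2)=2^{n_1}$, and Proposition~\ref{relg} gives
\[
c_n^{\mathbb{Z}_2,UT_2}(UT_2) \;=\; (2^n+2) + n\cdot 2^{n-1} \;=\; 2^{n-1}(n+2)+2,
\]
while the matching of the spanning-set size with the codimension forces $I=\IsUT(UT_2)$.
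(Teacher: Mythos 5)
Your proposal is correct and follows essentially the same route as the paper: derive the auxiliary consequences $z_1z_2$, $e_{12}z$, $ze_{12}$, $e_{12}yz$, $zye_{12}$ of the generators, reduce each multilinear component ($n_2=0$ and $n_2=1$) to the same normal forms $y_{i_1}\cdots y_{i_r}e_{12}y_{j_1}\cdots y_{j_{n-r}}$, $y_1\cdots y_n$, $e_{22}y_1\cdots y_n$ and $y_{i_1}\cdots y_{i_r}zy_{j_1}\cdots y_{j_{n-1-r}}$, certify linear independence by evaluation on diagonal elements, and assemble the codimension via Proposition~\ref{relg}. The only differences are cosmetic (generic diagonal substitutions $\alpha_i e_{11}+\beta_i e_{22}$ instead of the paper's specific evaluations at $1$, $e_{11}$, $e_{22}$, and an explicit derivation of $e_{12}ye_{12}\in I$ where the paper simply observes this polynomial is $UT_2$-trivial), and the step you flag as the main obstacle in (ii) goes through exactly as you expect, since any intervening $e_{22}$ either annihilates against $z$ or is absorbed by $e_{12}$ after commuting past the $y$'s.
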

\begin{proof}
Let $T=\langle [y_1,y_2],\ e_{22}y-ye_{22}, \ e_{22}z, \ ze_{22}-z\rangle_{T_{\mathbb{Z}_2,UT_2}}$. It is clear that $T \subseteq \IsUT(UT_2),$ thus in order to reach our goal, suppose by contradiction that there exists a multilinear polynomial $f\in P_n^{\mathbb{Z}_2,UT_2}\cap\IsUT(UT_2)$ such that $f\notin T,$ i.e., $f$ is a non-zero polynomial modulo $T.$
     
     By Lemma \ref{lem: cosequence}, $z_1z_2\in T$ so we can assume that 
    \[
    f\equiv f_1(y_1,\ldots,y_n)+f_2(y_1,\ldots,y_{n-1},z) \quad (\mad T).
    \]
    Since $\cher\ F=0$, every multihomogeneous component of $f$ is still a generalized graded polynomial identity for $UT_2$, therefore $f_1$ and $f_2$ are both graded generalized identities of $UT_2$. 
    Since $[y_1,y_2]\equiv 0,$  $e_{22}y-ye_{22} \equiv 0$ and  $e_{12}ye_{12}  = 0,$ it follows that
    $$
    f_1(y_1,\ldots,y_n)\equiv \alpha y_1 \cdots y_n+ \beta e_{22} y_1\cdots y_n + \sum_{\mathcal{I},\mathcal{J}}\gamma_{\mathcal{I},\mathcal{J}}y_{i_1}\cdots y_{i_r}e_{12}y_{j_1}\cdots y_{j_{n-r}} (\mad T),
    $$
    where $\mathcal{I}=\{i_1,\ldots,i_r\}$ and $\mathcal{J}=\{j_1,\ldots,j_{n-r}\}$ are disjoint subsets of $\{1,\ldots,n\}$ such that $i_1<i_2<\ldots < i_r,$ $j_1 < j_2< \ldots < j_{n-r}$ and $0 \leq r \leq n.$

    If we substitute $y_1,\ldots,y_n$ with $1=e_{11}+e_{22}$ we get $\alpha 1 + \beta e_{22} + \sum_{\mathcal{I},\mathcal{J}}\gamma_{\mathcal{I},\mathcal{J}}e_{12}=0$, then $\alpha=\beta=0$. So 
    \[
    f_1(y_1,\ldots,y_n)\equiv \sum_{\mathcal{I},\mathcal{J}}\gamma_{\mathcal{I},\mathcal{J}}y_{i_1}\cdots y_{i_r}e_{12}y_{j_1}\cdots y_{j_{n-r}} \  (\mad T).
    \]
    For any fixed $\mathcal{I}$ and $\mathcal{J}$, if we replace $y_{i_1},\ldots,y_{i_r}$ with $e_{11}$ and $y_{j_1},\ldots,y_{j_{n-r}}$ with $e_{22}$, we get $\gamma_{\mathcal{I},\mathcal{J}}e_{12}=0$. Therefore $\gamma_{\mathcal{I},\mathcal{J}}=0$ for all $\mathcal{I}$ and $\mathcal{J},$ thus $f_1 =0$ modulo $T.$ 
    
    Now let focus our attention on $f_2.$ By Lemma \ref{lem: cosequence} $ e_{12}z, \ ze_{12},\ e_{12}yz,\ zye_{12} \in T  $.  Then from $e_{22}z \equiv 0$, $e_{22}y-ye_{22} \equiv 0$, $ e_{12}z\equiv 0$, $ e_{12}yz \equiv 0$, $ze_{22}-z \equiv 0$, $ ze_{12}\equiv 0$ and $ zye_{12}\equiv 0$ we can suppose that $e_{22}$ and $e_{12}$ do not appear in $f_2.$ Moreover $[y_1,y_2]\equiv 0,$ thus 
$$
f_2\equiv \sum_{\mathcal{I},\mathcal{J}}\alpha_{\mathcal{I},\mathcal{J}} y_{i_1}\cdots y_{i_r}zy_{j_1}\cdots y_{j_{n-1-r}} \ (\mad T),
$$
where as before $\mathcal{I}=\{i_1,\ldots,i_r\}$ and $\mathcal{J}=\{j_1,\ldots,j_{n-1-r}\}$ are disjoint subsets of $\{1,\ldots,n-1\}$ such that $i_1<i_2<\ldots < i_r,$ $j_1 < j_2< \ldots < j_{n-1-r}$ and $0 \leq r \leq n-1.$

For any fixed $\mathcal{I}$ and $\mathcal{J},$ if we substitute $y_{i_1},\ldots,y_{i_r}$ with $e_{11}$, $y_{j_1},\ldots,y_{j_{n-1-r}}$ with $e_{22}$ and $z$ with $e_{12},$ we get $\alpha_{\mathcal{I},\mathcal{J}}e_{12}= 0$. Thus $\alpha_{\mathcal{I},\mathcal{J}}=0$ for all $\mathcal{I}$ and $\mathcal{J}$ and $f_2=0$ modulo $T.$ 

This implies that $f\equiv 0 \ (\mad T),$ a contradiction. Therefore $T=\IsUT(UT_2).$

We have also proved that the following monomials form a basis of $P_n^{\mathbb{Z}_2,UT_2}$ modulo $P_n^{\mathbb{Z}_2,UT_2} \cap \IsUT(UT_2)$:
\begin{equation*}
\label{gruppodi3}
\begin{split}
        &y_1\cdots y_n,\\
       & e_{22}y_1\cdots y_n,\\
&y_{i_1}\cdots y_{i_r}e_{12}y_{j_1}\cdots y_{j_{n-r}}, \\
&y_{l_1}\cdots y_{l_u}z y_{q_1} \cdots y_{q_{n-u-1}},
\end{split}
\end{equation*}
where $0\leq r \leq n$ and $0\leq u \leq n-1$ such that $i_1<\ldots<i_r,\ j_1<\ldots<j_{n-r},\ l_1<\ldots<l_u,\ q_1<\ldots<q_{n-u-1}$.  Hence, by counting them, we get 
\begin{equation}
\label{spezzetaton}
    c_{n,0}^{\mathbb{Z}_2,UT_2}(UT_2)=1+1+ \sum_{r=0}^{n}\binom{n}{r}=2^n+2,
\end{equation}
and 
\begin{equation}
\label{spezzetaton-11}
    c_{n-1,1}^{\mathbb{Z}_2,UT_2}(UT_2)= \sum_{r=0}^{n-1}\binom{n-1}{r}=2^{n-1}.
\end{equation}
As a cosequence, by Proposition \ref{relg}, we have that
$c_n^{\mathbb{Z}_2,UT_2}(UT_2)=2^{n-1}(n+2)+2$.
\end{proof}

We now translate all the previous results in terms of the $\mathbb{Z}_2$-graded $W$-algebra $UT_2$ where $W$ acts on it as the algebra $UT_2$ by left and right multiplication. To do so, we establish the following notation.

\begin{notation}\label{notation}
    We fix an ordered basis $\bi_W=\{w_i\}_{i \in \mathcal{I}}$ of the graded algebra $W$ over the field $F$, choosing it so that the first element is $w_0=1_W$, the unity of $W.$ Whenerver we deal with a  finite-dimensional unital $G$-graded $W$-algebra $A$, where the action of $W$ is given by left and right multiplication by elements of a unital graded subalgebra $B \subseteq A$, we also fix an ordered basis $\bi_B=\{b_0=1_A,b_1,\ldots,b_n\}$ of $B$ over $F$. Under these assumptions, we assume, as we may, that the action homomorphism $\Phi$ maps each $w_i$ to the pair $(R_{b_i},L_{b_i})$ in the multiplier algebra $\mathcal{M}(A)$, for all $0\leq i\leq n$. Furthermore, for all $i \geq n+1$, we assume that $w_i$ lies in the kernel of $\Phi$, meaning $\Phi(w_i)=0$.
\end{notation}

Accordingly, fixing the ordered basis $\bi_{UT_2}=\{1 := e_{11}+e_{22},e_{22},e_{12}\}$ for $UT_2$, the following result holds.

\begin{corollary}
    Let $UT_2$ be the $\mathbb{Z}_2$-graded $W$-algebra $UT_2$ endowed with the canonical grading where $W$ acts on it as $UT_2$ itself by left and right multiplication. Then $\gids(UT_2)$ is generated, as $T_{\mathbb{Z}_2,W}$-ideal, by the following polynomials:
    \[
    [y_1,y_2], \quad   w_1y-yw_1,  \quad zw_1-z, \quad w_1z, \quad yw_i,  \quad  w_iy, \quad zw_i,  \quad w_iz 
    \]
for any $i \geq 3$. Moreover, $\gcns(UT_2)=2^{n-1}(n+2)+2$.
\end{corollary}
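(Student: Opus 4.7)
The plan is to reduce to Theorem \ref{Tideale} by transferring everything through the acting homomorphism. By Proposition \ref{pro: unital graded W-alg} and the hypothesis that $W$ acts as $UT_2$ via left and right multiplication, the acting homomorphism $\Phi \colon W \to \mathcal{M}(UT_2)$ has image isomorphic to $UT_2$. Using Notation \ref{notation} with ordered basis $\{b_0 = 1,\, b_1 = e_{22},\, b_2 = e_{12}\}$, we have $\Phi(w_0) = (R_1,L_1)$, $\Phi(w_1) = (R_{e_{22}}, L_{e_{22}})$, $\Phi(w_2) = (R_{e_{12}}, L_{e_{12}})$, and $\Phi(w_i) = 0$ for $i \geq 3$. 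This $\Phi$ extends to a surjective $\mathbb{Z}_2$-graded $F$-algebra homomorphism $\tilde\Phi \colon W\langle X\rangle^{\mathbb{Z}_2} \to UT_2\langle X\rangle^{\mathbb{Z}_2}$ which is the identity on $X$ and substitutes each $w_i$ by its image in $UT_2$. Its characterizing property is that $f \in \gids(UT_2)$ if and only if $\tilde\Phi(f)$ vanishes on $UT_2$ as a $UT_2$-algebra; equivalently, $\gids(UT_2) = \tilde\Phi^{-1}(\IsUT(UT_2))$.

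Next, I would establish two compatibility statements. First, the kernel of $\tilde\Phi$ is precisely the $T_{\mathbb{Z}_2, W}$-ideal $T_0 := \langle yw_i,\, w_i y,\, zw_i,\, w_i z : i \geq 3\rangle_{T_{\mathbb{Z}_2, W}}$: on the monomial basis, $\tilde\Phi$ annihilates exactly those monomials containing at least one factor $w_i$ with $i \geq 3$, and every such multilinear monomial factors through one of the listed generators sandwiched between suitable subwords. Second, for any graded generalized polynomial $f$, one has the correspondence $\tilde\Phi(\langle f\rangle_{T_{\mathbb{Z}_2,W}}) = \langle \tilde\Phi(f)\rangle_{T_{\mathbb{Z}_2,UT_2}}$; this follows because every $\mathbb{Z}_2$-graded $UT_2$-endomorphism $\tilde\sigma$ of $UT_2\langle X\rangle^{\mathbb{Z}_2}$ lifts (via surjectivity of $\tilde\Phi$) to some $\mathbb{Z}_2$-graded $W$-endomorphism $\sigma$ of $W\langle X\rangle^{\mathbb{Z}_2}$ with $\tilde\Phi \circ \sigma = \tilde\sigma \circ \tilde\Phi$, and conversely every $W$-endomorphism descends to a $UT_2$-endomorphism of the target.

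Combining these with Theorem \ref{Tideale}, which identifies the $T_{\mathbb{Z}_2, UT_2}$-generators of $\IsUT(UT_2)$, we get
\[
\gids(UT_2) = \tilde\Phi^{-1}(\IsUT(UT_2)) = \langle [y_1,y_2],\, w_1 y - y w_1,\, z w_1 - z,\, w_1 z\rangle_{T_{\mathbb{Z}_2,W}} + T_0,
\]
which yields the claimed list of generators after noting that both parts of the sum already lie in the asserted $T_{\mathbb{Z}_2, W}$-ideal. For the codimension, since $\tilde\Phi$ is surjective, preserves multilinearity, and satisfies $\ker\tilde\Phi \subseteq \gids(UT_2)$, the induced map
\[
\frac{P_n^{\mathbb{Z}_2, W}}{P_n^{\mathbb{Z}_2, W} \cap \gids(UT_2)} \longrightarrow \frac{P_n^{\mathbb{Z}_2, UT_2}}{P_n^{\mathbb{Z}_2, UT_2} \cap \IsUT(UT_2)}
\]
is a vector space isomorphism, hence $\gcns(UT_2) = c_n^{\mathbb{Z}_2, UT_2}(UT_2) = 2^{n-1}(n+2) + 2$. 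The main technical hurdle is the second compatibility claim of the second paragraph: verifying carefully that $\tilde\Phi$ commutes with the endomorphism closure defining $T_{\mathbb{Z}_2,W}$- and $T_{\mathbb{Z}_2,UT_2}$-ideals, rather than only with the two-sided ideal structure; once this is in place, everything else reduces to bookkeeping and an appeal to Theorem \ref{Tideale}.
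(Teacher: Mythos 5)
Your proposal is correct and follows essentially the route the paper intends: the paper states this corollary without a separate proof, treating it as the direct translation of Theorem \ref{Tideale} through the acting homomorphism $\Phi$ and the basis conventions of Notation \ref{notation}, which is exactly what your map $\tilde\Phi$ formalizes (kernel generated by the $w_i$-annihilating polynomials for $i\geq 3$, compatibility with $T$-ideal closure, and the induced isomorphism on multilinear quotients giving the codimension). The only point worth tightening is that $\ker\tilde\Phi$ is spanned by \emph{all} basis monomials containing some $w_i$ with $i\geq 3$, not just the multilinear ones, but your factorization argument applies verbatim to those as well.
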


Now analyze the case of $\Phi(W) \cong D=Fe_{11}\oplus Fe_{22}$ and we choose the ordered basis of $D$ as the following
$$\bi_{D}=\{1_{UT_2},e_{22}\}.$$
With this assumption and similar argumentation as in Theorem \ref{Tideale}, the following theorem can be proven.
\begin{theorem}
\label{TheoremUT_2^D}
    Let $UT_2^{D}$ be the $\mathbb{Z}_2$-graded $W$-algebra $UT_2$ endowed with the canonical grading where $W$ acts on it as the algebra $D$ by left and right multiplication. Then $\gids(UT_2^D)$ is generated, as $T_{\mathbb{Z}_2,W}$-ideal, by the following polynomials:
    \[
    [y_1,y_2],  \quad w_1y-yw_1,   \quad  zw_1-z, \quad w_iz, \quad zw_j , \quad w_jy ,  \quad  yw_j
    \]
    for all $i \geq 1$, $j\geq 2$. Moreover, $\gcns(UT_2^D)=n2^{n-1}+2$.
\end{theorem}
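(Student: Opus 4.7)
The plan is to mirror Theorem \ref{Tideale} closely, exploiting the fact that the acting subalgebra no longer contains the off-diagonal generator $e_{12}$, so the basis of $P_n^{\mathbb{Z}_2,W}$ modulo identities becomes drastically smaller. First I would let $T$ denote the $T_{\mathbb{Z}_2,W}$-ideal generated by the listed polynomials and check $T\subseteq \gids(UT_2^D)$ by evaluating each generator: Notation \ref{notation} gives $\Phi(w_1)=(R_{e_{22}},L_{e_{22}})$ and $\Phi(w_j)=0$ for $j\geq 2$, so $[y_1,y_2]$ vanishes on the commutative component $UT_2^0=D$, the polynomials $w_1y-yw_1$, $zw_1-z$, and $w_1z$ vanish because $e_{22}\in Z(D)$, $e_{12}e_{22}=e_{12}$, and $e_{22}e_{12}=0$, and the remaining generators involving $w_j$ with $j\geq 2$ are automatic.

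For the reverse inclusion I would argue by contradiction with a nonzero multilinear $f\in P_n^{\mathbb{Z}_2,W}\cap\gids(UT_2^D)\setminus T$. Exactly as in Lemma \ref{lem: cosequence}, combining $w_1z\in T$ (left-multiplied by $z_1$ after $z\to z_2$) with $zw_1-z\in T$ (right-multiplied by $z_2$ after $z\to z_1$) yields $z_1z_2\in T$, so that $f\equiv f_1(y_1,\ldots,y_n)+f_2(y_1,\ldots,y_{n-1},z)\pmod T$ and each multihomogeneous component is itself an identity since $\cher F=0$.

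Next I would analyse $f_1$ and $f_2$ separately. For $f_1$, the identities $w_jy,\,yw_j\in T$ for $j\geq 2$ delete every monomial containing any $w_j$ with $j\geq 2$, while $w_1y\equiv yw_1$ and $[y_1,y_2]\equiv 0$ let me commute and reorder all $y$'s and $w_1$'s. The delicate point is handling the arbitrary powers $w_1^k$ that arise when substituting $\mathbb{Z}_2$-graded $W$-endomorphisms: since $\Phi(w_1^2)=\Phi(w_1)$, the element $w_1^2\in W$ decomposes in $\mathcal{B}_W$ as $w_1+\sum_{l\geq 2}c_lw_l$, so invoking $w_ly\in T$ for $l\geq 2$ collapses $w_1^ky_1\cdots y_n$ to $w_1y_1\cdots y_n$ modulo $T$ for every $k\geq 1$. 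Hence $f_1\equiv \alpha y_1\cdots y_n+\beta w_1y_1\cdots y_n\pmod T$, and the substitution $y_i\to 1_{UT_2}$ forces $\alpha=\beta=0$. For $f_2$, the same cleanup kills every $w_j$ with $j\geq 2$; every $w_1$ occurring to the left of $z$ is annihilated by $w_1z\in T$ after commuting past the $y$'s, while every $w_1$ on the right of $z$ is absorbed by $zw_1\equiv z$; reordering the $y$'s then gives $f_2\equiv \sum_{\mathcal{I},\mathcal{J}}\alpha_{\mathcal{I},\mathcal{J}}\,y_{i_1}\cdots y_{i_r}\,z\,y_{j_1}\cdots y_{j_{n-1-r}}\pmod T$ with ordered indices, and the usual evaluation $y_{i_k}\to e_{11}$, $y_{j_k}\to e_{22}$, $z\to e_{12}$ forces each $\alpha_{\mathcal{I},\mathcal{J}}=0$, contradicting $f\not\equiv 0\pmod T$.

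The same analysis exhibits explicit bases of the relative spaces, giving $c^{\mathbb{Z}_2,W}_{n,0}(UT_2^D)=2$ (the classes of $y_1\cdots y_n$ and $w_1y_1\cdots y_n$) and $c^{\mathbb{Z}_2,W}_{n-1,1}(UT_2^D)=2^{n-1}$, with $c^{\mathbb{Z}_2,W}_{k,n-k}(UT_2^D)=0$ for $n-k\geq 2$ because $z_1z_2\in T$; Proposition \ref{relg} then yields $\gcns(UT_2^D)=\binom{n}{n}\cdot 2+\binom{n}{n-1}\cdot 2^{n-1}=n\,2^{n-1}+2$. The main obstacle I anticipate is the absorption of the powers $w_1^k$ mentioned above: they do not appear literally among the generators but are produced by the closure of $T$ under $\mathbb{Z}_2$-graded $W$-endomorphisms, and the careful bookkeeping required to collapse them via $\Phi(w_1^2)=\Phi(w_1)$ together with the annihilation identities $w_ly\in T$, $l\geq 2$, is what keeps the reduction of $f_1$ honest.
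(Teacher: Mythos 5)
Your proposal is correct and follows essentially the same route as the paper, which establishes this theorem by adapting the argument of Theorem \ref{Tideale} to the basis $\bi_D=\{1_{UT_2},e_{22}\}$ and leaves the details to the reader. The extra care you take in absorbing the powers $w_1^k$ via $\Phi(w_1^2)=\Phi(w_1)$ together with the generators $w_ly$, $l\geq 2$, is precisely the bookkeeping the omitted proof would require, and your codimension count via Proposition \ref{relg} matches the stated value $n2^{n-1}+2$.
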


We now study the algebra $UT_2^C,$ i.e. the $W$-algebra $UT_2$ with action given by left and right multiplication by elements of $C=F1_{UT_2}+ Fe_{12}.$ We highlight that the proof of the following result is easily obtained from the one of Theorem  \ref{Tideale}, with some mild and intuitive modifications.

To this end, let 
$$\bi_C=\{1_{UT_2}, e_{12}\}$$ a fixed ordered basis of $C.$ 
\begin{theorem}
\label{AlgebraCidentità}
    Let $UT_2^C$ be the $\mathbb{Z}_2$-graded $W$-algebra $UT_2$ endowed with the canonical grading where $W$ acts as the algebra $C$ by left and right multiplication. Then $\gids(UT_2^C)$ is generated, as $T_{\mathbb{Z}_2,W}$-ideal, by the following polynomials:
    $$[y_1,y_2], \quad z_1z_2, \quad yw_i, \quad w_iy, \quad zw_j, \quad w_jz$$
    for all $i \geq 2$ and $j \geq 1$. Moreover, $\gcns(UT_2^C)=2^{n-1}(n+2)+1$.
\end{theorem}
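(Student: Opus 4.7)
The plan is to adapt the proof of Theorem \ref{Tideale} to the $C$-action, replacing the role of $e_{22}$ there with $e_{12}$ here. Let $T$ denote the $T_{\mathbb{Z}_2,W}$-ideal generated by the polynomials in the statement. Direct evaluation on the generators gives $T\subseteq \gids(UT_2^C)$, so the task is the reverse inclusion. I would proceed by contradiction, assuming some multilinear $f\in \gpns \cap \gids(UT_2^C)$ is non-zero modulo $T$, and then forcing $f\equiv 0 \pmod T$ by reducing to a normal form and evaluating on carefully chosen matrices.

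Before reducing $f$, I would collect a short list of consequences of $T$ analogous to Lemma \ref{lem: cosequence}. Substituting $z\mapsto Mz$ or $z\mapsto zM$ (with $M$ an even monomial) in $w_1z$ and $zw_1$ shows that every multilinear monomial in which $w_1$ and $z$ both occur lies in $T$. Substituting $z\mapsto w_1 M$ (with $M$ even) in $zw_1$ shows that every multilinear even-variable monomial with two or more occurrences of $w_1$ is in $T$. Finally, substituting $z_2\mapsto Mz_2$ (with $M$ even) in $z_1z_2$ shows that every multilinear monomial with at least two $z$-variables lies in $T$; combined with the first consequence, this gives $c_{n_0,n_1}^{\mathbb{Z}_2,W}(UT_2^C)=0$ for $n_1\geq 2$.

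With these reductions in hand, Proposition \ref{relg} reduces the problem to $n_1\in\{0,1\}$. Writing $f\equiv f_1(y_1,\ldots,y_n)+f_2(y_1,\ldots,y_{n-1},z)\pmod T$, I would use $[y_1,y_2]$ together with the second consequence to express $f_1$ modulo $T$ as a combination of $y_1\cdots y_n$ and the monomials $y_{i_1}\cdots y_{i_r}w_1 y_{j_1}\cdots y_{j_{n-r}}$ indexed by ordered partitions of $\{1,\ldots,n\}$, and the first consequence to express $f_2$ modulo $T$ as a combination of $y_{i_1}\cdots y_{i_r} z y_{j_1}\cdots y_{j_{n-1-r}}$. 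Evaluating with $y_k=\alpha_k e_{11}+\beta_k e_{22}$ and $z=\gamma e_{12}$, the diagonal and the strictly upper triangular parts yield independent constraints, and standard $0/1$ specialisations of the $\alpha_i,\beta_j$ force every coefficient to vanish, which is the desired contradiction.

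The main obstacle, and where the argument genuinely diverges from Theorem \ref{Tideale}, is this list of consequences: in the earlier theorem both $e_{22}$ and $e_{12}$ contributed direct algebraic kills that simplified the reduction, whereas here only $w_1\leftrightarrow e_{12}$ is available, and the nilpotency and parity-flipping of $e_{12}$ require a careful choice of odd versus even substitutions to eliminate all the "bad" monomials. Once the normal forms are in place the count is immediate: there are $1+2^n$ linearly independent representatives in the $n_1=0$ component and $2^{n-1}$ in the $n_1=1$ component, so Proposition \ref{relg} yields
$$
\gcns(UT_2^C)=(1+2^n)+n\cdot 2^{n-1}=2^{n-1}(n+2)+1.
$$
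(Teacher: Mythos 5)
Your proposal is correct and follows essentially the same route as the paper, which gives no separate argument for this theorem but explicitly derives it from the proof of Theorem~\ref{Tideale} ``with some mild and intuitive modifications''; your list of consequences of $zw_1$, $w_1z$ and $z_1z_2$ is precisely the right replacement for Lemma~\ref{lem: cosequence}, your normal forms ($y_1\cdots y_n$, $y_{i_1}\cdots y_{i_r}w_1y_{j_1}\cdots y_{j_{n-r}}$, $y_{i_1}\cdots y_{i_r}zy_{j_1}\cdots y_{j_{n-1-r}}$) are the correct analogues of those in Theorem~\ref{Tideale} with the $e_{22}y_1\cdots y_n$ term rightly absent, and the evaluation on $e_{11},e_{22},e_{12}$ and the final count $c_{n,0}+nc_{n-1,1}=(2^n+1)+n2^{n-1}$ via Proposition~\ref{relg} all match.
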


As a final case, for the sake of completeness, we analyze $UT_2^F.$ Recall that in this case $\Phi(W)\cong F,$ therefore we are dealing with the ordinary graded identities of $UT_2$ and we can state the following results from \cite{VA}.

\begin{theorem}
\label{TheoremUT2F}
    Let $UT_2^F$ be the $\mathbb{Z}_2$-graded $W$-algebra $UT_2$ endowed with the canonical grading where $W$ acts on it as the algebra $F$ by left and right multiplication.
    Then $\gids(UT_2^F)$ is generated, as $T_{\mathbb{Z}_2, W}$-ideal, by the following polynomials:
    \[
    [y_1,y_2], \quad z_1z_2,   \quad yw_i, \quad w_iy, \quad zw_i, \quad w_iz
    \]
    for all $i \geq 1$. Moreover, $\gcns(UT_2^F)=n2^{n-1}+1$.
\end{theorem}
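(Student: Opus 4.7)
The plan is to mirror the strategy of Theorem~\ref{Tideale}, exploiting the fact that under Notation~\ref{notation} the action homomorphism $\Phi:W\to\mathcal{M}(UT_2)$ sends $w_0=1_W$ to the identity multiplier and kills every basis element $w_i$ with $i\geq 1$ (since the image is $\Phi(W)\cong F$, spanned by $(R_{1_{UT_2}},L_{1_{UT_2}})$). Letting $T$ denote the $T_{\mathbb{Z}_2,W}$-ideal generated by the listed polynomials, the inclusion $T\subseteq\gids(UT_2^F)$ is immediate: $[y_1,y_2]\equiv 0$ because $UT_2^0=Fe_{11}\oplus Fe_{22}$ is commutative; $z_1z_2\equiv 0$ because $UT_2^1\cdot UT_2^1=Fe_{12}\cdot Fe_{12}=0$; and $w_i y$, $yw_i$, $w_i z$, $zw_i$ vanish identically for $i\geq 1$ because $\Phi(w_i)=0$.

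For the reverse inclusion I would take a multilinear $f\in\gpns\cap\gids(UT_2^F)$ and reduce it modulo $T$ to a standard form. Any basis monomial $w_{q_0}x_{\sigma(1)}w_{q_1}\cdots w_{q_{n-1}}x_{\sigma(n)}w_{q_n}$ in which some $w_{q_j}$ has index $\geq 1$ lies in $T$, because that $w_{q_j}$ sits adjacent to a graded variable and the corresponding factor $w_{q_j}x_{\sigma(j+1)}$ (or $x_{\sigma(j)}w_{q_j}$ at the right end) equals zero modulo $T$. Thus every surviving monomial has $w_{q_j}=w_0=1_W$ for all $j$, and by \eqref{1Wx} it collapses to the ordinary graded monomial $x_{\sigma(1)}\cdots x_{\sigma(n)}$. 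Then $z_1z_2\in T$ forces at most one $z$-variable to appear, and $[y_1,y_2]\in T$ allows the $y$-variables to be reordered. Consequently, modulo $T$ the polynomial $f$ is a linear combination of $y_1\cdots y_n$ (the all-$y$ case) and, when exactly one $z$-variable is present, of the monomials
\[
y_{i_1}\cdots y_{i_r}\,z\,y_{j_1}\cdots y_{j_{n-1-r}},\qquad i_1<\cdots<i_r,\ j_1<\cdots<j_{n-1-r},
\]
for $0\leq r\leq n-1$ and $\{i_1,\ldots,i_r\}\sqcup\{j_1,\ldots,j_{n-1-r\}}=\{1,\ldots,n-1\}$.

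Linear independence modulo $\gids(UT_2^F)$ follows from the classical graded evaluations: substituting each $y_k$ by $e_{11}+e_{22}$ reads off the coefficient of $y_1\cdots y_n$ from the diagonal entries, while substituting $y_{i_k}\mapsto e_{11}$, $y_{j_k}\mapsto e_{22}$ and $z\mapsto e_{12}$ isolates the coefficient of the corresponding $z$-monomial as a multiple of $e_{12}$. This proves $T=\gids(UT_2^F)$ and simultaneously yields $c_{n,0}^{\mathbb{Z}_2,W}(UT_2^F)=1$ and $c_{n-1,1}^{\mathbb{Z}_2,W}(UT_2^F)=2^{n-1}$. Proposition~\ref{relg} then combines these into
\[
\gcns(UT_2^F)=\binom{n}{n}\cdot 1+\binom{n}{n-1,1}\cdot 2^{n-1}=1+n\cdot 2^{n-1}.
\]
The only mildly delicate point is the systematic elimination of the non-identity $w_i$'s in the first reduction step; this is strictly easier than the analogous cleanup of the factors $e_{22}$ and $e_{12}$ performed inside Theorem~\ref{Tideale}, since here the relations $w_iy\equiv yw_i\equiv w_iz\equiv zw_i\equiv 0$ hold unconditionally rather than up to a correction term.
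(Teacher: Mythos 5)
Your proof is correct, but it takes a different route from the paper: since $\Phi(W)\cong F$, the paper observes that the generalized graded identities of $UT_2^F$ are exactly the ordinary $\mathbb{Z}_2$-graded identities of $UT_2$ (once the generators $w_iy$, $yw_i$, $w_iz$, $zw_i$, $i\geq 1$, have disposed of the non-unit basis elements of $W$) and simply quotes Valenti's theorem from \cite{VA}, whereas you give a self-contained argument modeled on Theorem~\ref{Tideale}. Your two additional ingredients are sound: a monomial $w_{q_0}x_{\sigma(1)}w_{q_1}\cdots x_{\sigma(n)}w_{q_n}$ with some $q_j\geq 1$ has the instance $w_{q_j}x_{\sigma(j+1)}$ (or $x_{\sigma(j)}w_{q_j}$) of a listed generator as a factor and hence lies in $T$ --- this is exactly where Notation~\ref{notation}, with $\Phi(w_i)=0$ for $i\geq 1$, enters --- and the evaluations $y_k\mapsto e_{11}+e_{22}$, respectively $y_{i_k}\mapsto e_{11}$, $y_{j_k}\mapsto e_{22}$, $z\mapsto e_{12}$, are precisely those used for $f_1$ and $f_2$ in the proof of Theorem~\ref{Tideale} and do isolate the coefficients. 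What the citation buys the paper is brevity; what your version buys is that the reader sees explicitly why $c^{\mathbb{Z}_2,W}_{n,0}(UT_2^F)=1$ and $c^{\mathbb{Z}_2,W}_{n-1,1}(UT_2^F)=2^{n-1}$. Two points you leave implicit and should record: $[y_1,y_2]$ only permits reordering the even variables within each block on either side of the single $z$ (your normal form already reflects this correctly), and $c^{\mathbb{Z}_2,W}_{n-r,r}(UT_2^F)=0$ for $r\geq 2$, which is needed when summing via Proposition~\ref{relg} and follows from $z_1z_2\in T$ together with graded substitutions such as $z_2\mapsto yz_2$.
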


\section{Graded generalized cocharacter sequence of $UT_2(F)$}

In this section, we aim to provide a description of the space of generalized multilinear polynomial identities of $UT_2,$ for each possible action of $W$, in the language of Young diagrams through the representation theory of a Young subgroup of $S_n.$

To this end, let $A$ be a $\mathbb{Z}_2$-graded $W$-algebra. For $n \geq 1$ and $0\leq r\leq n$
we consider the left permutation action of the group $S_{r} \times S_{n-r}$ on the space $P^{\mathbb{Z}_2,W}_{r,n-r}$ by letting $S_r$ act on $y_1,\ldots,y_r$ and $S_{n-r}$ on $z_1,\ldots,z_{n-r}$. Since $\gids(A)$ is invariant under this $S_r \times S_{n-r}$ action, we have that 
$$
P^{\mathbb{Z}_2,W}_{r,n-r}(A):=\frac{P^{\mathbb{Z}_2,W}_{r,n-r}}{P^{\mathbb{Z}_2,W}_{r,n-r} \cap \gids(A)}
$$
is a $S_{r}\times S_{n-r}$ left module. The corresponding character, denoted by $\gchi(A),$ is called $(r, n-r)$th generalized graded  cocharacter of $A.$

On one hand, it is well-known that the irreducible characters of the symmetric group $S_n$ are in a one-to-one correspondence with the partitions of the integer $n,$ on the other the irreducible $S_r \times S_{n-r}$ characters are obtained by taking the outer product of $S_r$ and $S_{n-r}$ irreducible characters, respectively. Then, by complete reducibility we can write
\begin{equation*}
\gchi(A)=\sum_{\substack{\lambda \vdash r \\ \mu \vdash n-r}}m_{\lambda,\mu}(\chi_\lambda \otimes \chi_\mu)
\end{equation*}
where $\chi_\lambda$ (respectively, $\chi_{\mu}$) denotes the irreducible $S_r$-character (respectively $S_{n-r}$-character) and $m_{\lambda,\mu} \geq 0$ are the corresponding multiplicities. 

Recall that the multiplicities in the cocharacter sequence are equal to the maximal number of linearly independent highest weight vectors, according to the representation theory of $GL_n.$ We also recall that a highest weight vector is obtained from the polynomial corresponding to an essential idempotent by identifying the variables whose indices lie in the same row and alternating the ones whose indices lie in the same column of the corresponding Young tableau (see \cite{Drenskybook} for more details).
Furthermore
\begin{equation}
\label{gcrn-r}
    c_{r,n-r}^{\mathbb{Z}_2, W}(A)= \sum_{\substack{\lambda \vdash r \\ \mu \vdash n-r}}m_{\lambda,\mu}d_{\lambda}d_{\mu}
\end{equation}
where $d_{\lambda}$ and $d_\mu$ are the degrees of $\chi_\lambda$ and $\chi_\mu$, i.e., the number of standard tableaux of shape $\lambda$ and $\mu,$ respectively.

In what follows, given a partition $\lambda\vdash n,$ we define height of $\lambda,$ and we write $h(\lambda),$ as the number of rows of $\lambda.$ We can immediately remark that if $\dim A^0 = k$ and $\dim A^1= t,$ then every generalized polynomial alternating in $k$ or more even variables or $t$ or more odd variables is a polynomial identity, thus $m_{\lambda, \mu}=0$ whenever $h(\lambda)>k$ or $h(\mu)>t.$

\smallskip

Next, we compute the decomposition into irreducible components of the generalized graded cocharacter of the superalgebra $UT_2$ endowed with the canonical grading.

We start, as in the previous section, considering $\Phi(W)\cong UT_2$ and for the simplicity of the notation $W=UT_2$. Then we prove the following lemmas for the multplicities $m_{\lambda,\mu}$ of the $(r,n-r)$th generalized graded cocharacter of $UT_2$
\begin{equation}
\label{chigenerale}
\chi_{r,n-r}^{\mathbb{Z}_2,UT_2}(UT_2)=\sum_{\substack{\lambda \vdash r \\ \mu \vdash n-r}}m_{\lambda,\mu}(\chi_\lambda \otimes \chi_\mu).
\end{equation}
\begin{lemma}
\label{lemmam_1}
    If $r=n$, then in \eqref{chigenerale} we have
    $$
    m_{\lambda,\mu}=\begin{cases}
   n+3 &\text{if }\lambda=(n), \ \mu=\emptyset\\
   q+1 &\text{if }\lambda=(p+q,p), \ \mu=\emptyset \\
     0 & \text{otherwise}
\end{cases}
    $$
    for all $p \geq 1$ and $q \geq 0$ such that $2p+q=n$.
\end{lemma}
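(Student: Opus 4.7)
The plan is to decompose the $S_n$-module $P^{\mathbb{Z}_2,UT_2}_{n,0}(UT_2)$ into irreducibles using the explicit $F$-basis furnished by the proof of Theorem \ref{Tideale}, which consists of the polynomials $y_1\cdots y_n$, $e_{22}y_1\cdots y_n$, and, for each $r\in\{0,1,\ldots,n\}$, the monomials $y_{i_1}\cdots y_{i_r}e_{12}y_{j_1}\cdots y_{j_{n-r}}$ indexed by the disjoint ordered subsets $\{i_1<\cdots<i_r\}\sqcup\{j_1<\cdots<j_{n-r}\}=\{1,\ldots,n\}$. Because $[y_1,y_2]\equiv 0$ holds on $UT_2$, a direct bookkeeping shows that the $S_n$-action preserves the partition of this basis into three blocks: the two singletons and, for each $r$, the $\binom{n}{r}$-element family indexed by $r$-subsets of $\{1,\ldots,n\}$.

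Each of the two singleton blocks carries the trivial representation $\chi_{(n)}$. For fixed $r$, applying $\sigma\in S_n$ to $y_{i_1}\cdots y_{i_r}e_{12}y_{j_1}\cdots y_{j_{n-r}}$ and then invoking $[y_1,y_2]\equiv 0$ to put the $y$'s on each side of $e_{12}$ back into increasing order, produces the canonical basis element associated to the subsets $\sigma(\{i_1,\ldots,i_r\})$ and $\sigma(\{j_1,\ldots,j_{n-r}\})$. The stabilizer of the reference element $y_1\cdots y_r e_{12} y_{r+1}\cdots y_n$ is therefore $S_r\times S_{n-r}$, so this block affords the permutation character $\mathrm{Ind}_{S_r\times S_{n-r}}^{S_n}\mathbf{1}$; by Young's rule, this decomposes as $\sum_{k=0}^{\min(r,n-r)}\chi_{(n-k,k)}$.

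Summing the contributions, the multiplicity of $\chi_{(n)}$ equals $2+\sum_{r=0}^{n}1=n+3$ (two from the singletons, plus one from the $k=0$ summand in each $r$-block), while for $k\geq 1$ the multiplicity of $\chi_{(n-k,k)}$ equals the number of $r\in\{0,\ldots,n\}$ with $k\leq r\leq n-k$, namely $n-2k+1$. Rewriting $\lambda=(n-k,k)=(p+q,p)$ with $p=k$ and $q=n-2k$ yields $m_{(p+q,p),\emptyset}=q+1$ for all $p\geq 1$, $q\geq 0$ with $2p+q=n$; since $\dim UT_2^0=2$ forces $m_{\lambda,\emptyset}=0$ whenever $h(\lambda)>2$, all remaining multiplicities vanish. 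The only delicate step is justifying that the reordering on either side of $e_{12}$ is genuinely a consequence of $[y_1,y_2]\equiv 0$ and introduces no extra terms modulo $\IsUT(UT_2)$; as a final consistency check, the dimensions recombine via $\sum_{r=0}^{n}\binom{n}{r}+2=2^n+2$, in agreement with $c_{n,0}^{\mathbb{Z}_2,UT_2}(UT_2)$ computed in Theorem \ref{Tideale}.
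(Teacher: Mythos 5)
Your proof is correct, but it takes a genuinely different route from the paper's. You observe that the basis of $P_n^{\mathbb{Z}_2,UT_2}$ modulo $P_n^{\mathbb{Z}_2,UT_2}\cap\IsUT(UT_2)$ exhibited in Theorem \ref{Tideale} is permuted by the $S_n$-action (since $[y_1,y_2]\equiv 0$ lets you re-sort the $y$'s on each side of $e_{12}$ without introducing correction terms --- this is indeed not delicate, as $u(y_ay_b-y_by_a)v$ lies in the $T_{\mathbb{Z}_2,UT_2}$-ideal for any monomials $u,v$), so the module splits as two copies of the trivial module plus, for each $r$, the permutation module on $r$-subsets of $\{1,\dots,n\}$; Young's rule then gives the multiplicities in closed form, and the identification $(n-k,k)=(p+q,p)$ with $p=k$, $q=n-2k$ recovers $q+1$. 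The paper instead proves only the \emph{lower} bounds $m_{(n),\emptyset}\geq n+3$ and $m_{(p+q,p),\emptyset}\geq q+1$ by exhibiting explicit linearly independent highest weight vectors (the monomials $y^n$, $e_{22}y^n$, $y^ie_{12}y^{n-i}$ and the alternating polynomials $c^{(i)}_{p,q}$, with independence checked by Vandermonde arguments), and then forces equality by comparing $\sum m_{\lambda,\mu}d_\lambda d_\mu$ against $c_{n,0}^{\mathbb{Z}_2,UT_2}(UT_2)=2^n+2$ via the combinatorial identity \eqref{Rizzo}. Your argument is cleaner in that it needs neither the Vandermonde computations nor \eqref{Rizzo} (your dimension check $2+\sum_r\binom{n}{r}=2^n+2$ is a consistency check rather than a logical ingredient); what it does not deliver are the explicit highest weight vectors themselves, which is the form of the information the paper actually reuses later (compare Remark \ref{remarkutd} and the proof of Theorem \ref{polynomialgrotwD}, where the analysis of subvarieties proceeds by imposing linear dependences on precisely such vectors). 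Both arguments depend equally on the basis produced in Theorem \ref{Tideale}.
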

\begin{proof}
Since $\dim UT_2^0=2,$ then $m_{\lambda,\mu}=0$ whenever $h(\lambda)  >  2.$
If $\lambda=(n)$ and $\mu=\emptyset$, we consider the following tableau
\[
\begin{tabular}{|c|c|c|c|}
\hline
1 & 2 & $\cdots$ & $n$ \\
\hline
\end{tabular}
\]
and we associate to it the polynomials
\begin{equation*}
\label{eq1}
    y^n,
\end{equation*}
\begin{equation*}
\label{eq2}
e_{22}y^n,
\end{equation*}
\begin{equation*}
    \label{eq3}
    y^ie_{12}y^{n-i},
\end{equation*}
for all $0\leq i\leq n.$
It is clear that such polynomials are not generalized identities of $UT_2$ and moreover they are linearly independent modulo $\IsUT(UT_2).$ Indeed, if
\[
f(y)=\alpha y^n + \beta e_{22}y^n + \sum_{i=0}^n  \gamma_i y^i e_{12} y^{n-i}
\in\ \IsUT(UT_2)
\]
and we evaluate $y$ in 1, we have \[
\alpha + \beta e_{22} + \sum_{i=0}^n \gamma_i e_{12}=0
\]
so $\alpha=\beta=0$. Therefore, if we consider $0 \neq \delta \in F$ and $y=\delta e_{11}+ e_{22}$ then
\[
f(\delta e_{11} + e_{22})= \sum_{i=0}^n \gamma_i(\delta e_{11} + e_{22})^i e_{12} (\delta e_{11} + e_{22})^{n-i}= \sum_{i=0}^n \gamma_i \delta^i e_{12}
=0\]
Since the matrix associated to the above linear system is a Vandermonde matrix and the field $F$ is infinite, it follows that $\gamma_{i}=0$ for all $0\leq i\leq n$. This implies that $m_{\lambda,\mu} \geq n+3.$

Now let $\mu=\emptyset$ and  $\lambda=(p+q,p)$ with $p \geq 1, q \geq 0$ and $2p+q=n$. For all $0\leq i\leq q,$ we consider the following tableau:
\[
T_\lambda^i=\begin{tabular}{|c|c|c|c|c|c|c|c|c|c|}
\hline
$i+1$ & $i+2$ & $\cdots$ & $i+p$ & 1 & $\cdots$ & $i$ & $i+2p+1$ & $\cdots$ & $n$ \\
\hline
$i+p+1$ & $i+p+2$ & $\cdots$ & $i+2p$ & \multicolumn{5}{c}{} \\
\cline{1-4}
\end{tabular}
\]
and its associate polynomials
\begin{equation}
\label{eq4}
c^{(i)}_{p,q}(y_1,y_2)=y_1^i\underbrace{\overline{y}_1\cdots\tilde{y}_1}_{p-1}(y_1e_{12}y_2-y_2e_{12}y_1)\underbrace{\overline{y}_2\cdots \tilde{y}_2}_{p-1}y_1^{q-i}
\end{equation}
where the symbol $\tilde{\phantom{a}}$ or $\overline{\phantom{a}}$ means alternation on the corresponding variables. Moreover, the polynomials in \eqref{eq4} are linearly independent modulo $\IsUT(UT_2)$. Indeed if 
\[
\sum_{i=0}^q\alpha_ic_{p,q}^{(i)}(y_1,y_2) \in \IsUT(UT_2),
\]
then by substituting $y_1=\delta e_{11} + e_{22}$, where $\delta \in F,$ $\delta \neq 0$, and $y_2=e_{11}$, we get 

$$
\sum_{i=0}^q(-1)^{p}\alpha_i\delta^i=0. 
$$

Again, by a Vandermonde argument, we have that $\alpha_i=0$ for any $0\leq i\leq q.$ It follows that the polynomials in \eqref{eq4} are linearly independent (mod $\gids(A)$) and this implies that $m_{\lambda, \mu}\geq q+1.$

By taking into account \eqref{spezzetaton} and \eqref{gcrn-r}, we have proved that
$$
2^n+2=c_{n,0}^{\mathbb{Z}_2, UT_2}(UT_2)\geq (n+3)d_{(n)}d_\emptyset + \sum_{\substack{ 1 \leq p \leq \lfloor\frac{n}{2}\rfloor \\ 0 \le q \le n-2p}} (q+1) d_{(p+q,p)}d_\emptyset, 
$$
so far.

Now, in the proof of \cite[Theorem 4.4]{MR}, it was showed that
\begin{equation}
\label{Rizzo}
\sum_{\substack{1 \leq p \leq \lfloor{\frac{n}{2}}\rfloor \\ 0 \leq q \leq n-2p}} (q+1) d_{(p+q,p)}=2^n-n-1,
\end{equation}
therefore, recalling that $d_\emptyset=1,$  we get
\[
 (n+3)d_{(n)}d_\emptyset + \sum_{\substack{ 1 \leq p \leq \lfloor\frac{n}{2}\rfloor \\ 0 \le q \le n-2p}} (q+1) d_{(p+q,p)}d_\emptyset=n+3 +2^n-n-1=2^n+2.
\]
This readily implies that $m_{(n), \emptyset} = n+3$ and $m_{(p+q,p), \emptyset}= q+1,$ as required.
\end{proof}
\begin{lemma}
\label{lemmam_2}
    If $r=n-1$, then in $\eqref{chigenerale}$ we have
    $$
        \label{m_2}
         m_{\lambda,\mu}=\begin{cases}
   n &\lambda=(n-1), \ \mu=(1)\\
 q+1 &\lambda=(p+q,p), \ \mu=(1) \\
     0 & \text{otherwise}
\end{cases}
    $$
    for all $p \geq 1,q \geq 0$ such that $2p+q=n-1$
\end{lemma}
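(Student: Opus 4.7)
The plan is to mirror the structure of the proof of Lemma \ref{lemmam_1}. Since $\dim_F UT_2^0 = 2$ and $\dim_F UT_2^1 = 1$, the multiplicity $m_{\lambda,\mu}$ in $\chi_{n-1,1}^{\mathbb{Z}_2,UT_2}(UT_2)$ vanishes whenever $h(\lambda) > 2$ or $h(\mu) > 1$. Since $\mu \vdash 1$ we must have $\mu = (1)$, and $\lambda$ is either the row $(n-1)$ or of the form $(p+q,p)$ with $p \geq 1$, $q \geq 0$, $2p+q = n-1$.

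For $\lambda = (n-1)$ and $\mu = (1)$, I would associate to the single-row tableau of $\lambda$ together with the single box of $\mu$ the $n$ highest weight vectors $y^i z y^{n-1-i}$, $0 \leq i \leq n-1$. To check their linear independence modulo $\IsUT(UT_2)$, I would substitute $y = \delta e_{11} + e_{22}$ and $z = e_{12}$ with $\delta \in F$: since $(\delta e_{11} + e_{22})^k = \delta^k e_{11} + e_{22}$, and $e_{11}e_{12} = e_{12}e_{22} = e_{12}$ while $e_{22}e_{12} = e_{12}e_{11} = 0$, each monomial $y^i z y^{n-1-i}$ evaluates to $\delta^i e_{12}$. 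A Vandermonde argument then forces all coefficients to vanish, giving $m_{(n-1),(1)} \geq n$.

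For $\lambda = (p+q,p)$ with $p \geq 1$ and $\mu = (1)$, I would associate, to each of the tableaux $T_\lambda^i$ (as in the proof of Lemma \ref{lemmam_1}) paired with the single box of $\mu$ inserted at the $z$-position, the highest weight vector
\[
c^{(i)}_{p,q}(y_1,y_2,z) = y_1^i \underbrace{\overline{y}_1 \cdots \tilde{y}_1}_{p-1}(y_1 z y_2 - y_2 z y_1)\underbrace{\overline{y}_2 \cdots \tilde{y}_2}_{p-1} y_1^{q-i},
\]
obtained from \eqref{eq4} by replacing the central $e_{12}$ with the odd variable $z$. This produces $q+1$ candidates. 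Linear independence modulo $\IsUT(UT_2)$ is verified by the same substitution as in Lemma \ref{lemmam_1}, namely $y_1 = \delta e_{11} + e_{22}$, $y_2 = e_{11}$, $z = e_{12}$: the alternations collapse all but one arrangement of the row-$1$/row-$2$ copies, leaving $\sum_i \alpha_i \delta^i \cdot e_{12}$ up to a common sign, and Vandermonde closes the argument. Thus $m_{(p+q,p),(1)} \geq q+1$.

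To force these lower bounds to be equalities, I would combine \eqref{gcrn-r} with \eqref{spezzetaton-11}, which gives $c^{\mathbb{Z}_2,UT_2}_{n-1,1}(UT_2) = 2^{n-1}$, and invoke \eqref{Rizzo} applied to $n-1$:
\[
2^{n-1} \geq n \cdot d_{(n-1)}d_{(1)} + \sum_{\substack{1 \leq p \leq \lfloor (n-1)/2 \rfloor \\ 0 \leq q \leq n-1-2p}} (q+1)\, d_{(p+q,p)} d_{(1)} = n + (2^{n-1}-n) = 2^{n-1}.
\]
Equality throughout forces every other $m_{\lambda,\mu}$ to vanish and pins the asserted values exactly. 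The main technical subtlety lies in the independence check of the $c^{(i)}_{p,q}$: one must make sure that, under the chosen evaluation, the two terms $y_1 z y_2$ and $y_2 z y_1$ do not both contribute simultaneously, so that a single nonzero scalar $\delta^i$ survives for each $i$; this is exactly the same mechanism that underlies the proof of Lemma \ref{lemmam_1}.
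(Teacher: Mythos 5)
Your proposal is correct and follows essentially the same route as the paper: explicit highest weight vectors for $\lambda=(n-1)$ and $\lambda=(p+q,p)$ with $\mu=(1)$, linear independence via evaluation at $y=\delta e_{11}+e_{22}$ (resp.\ $y_1=\delta e_{11}+e_{22},\ y_2=e_{11}$), $z=e_{12}$ and a Vandermonde argument, and then the equality $2^{n-1}=n+(2^{n-1}-n)$ from \eqref{spezzetaton-11}, \eqref{gcrn-r} and \eqref{Rizzo} to pin the multiplicities down. The only cosmetic differences are that the paper proves independence of the $y^izy^{n-1-i}$ by isolating a multihomogeneous component after $y\mapsto y_1+y_2$, and for $\lambda=(p+q,p)$ it uses the representative $b_{p,q}^{(i)}=y_1^i\overline{y}_1\cdots\tilde{y}_1\,z\,\overline{y}_2\cdots\tilde{y}_2\,y_1^{q-i}$ (citing \cite{VA} for independence) rather than your equivalent tableau with the alternating column straddling $z$; both choices are valid.
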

\begin{proof}
    As in Lemma \ref{lemmam_1}, we have $m_{\lambda,\mu}=0$ as long as $h(\lambda)  >  2.$ Now, let $\lambda=(n-1).$ Then for any $0\leq i\leq n-1$ we consider the following two tableaux
\[
T_\lambda^{(i)}=\begin{tabular}{|c|c|c|c|c|c|}
\hline
$1$ & $\cdots$ & $i$ & $i+2$ & $\cdots$ & $n$ \\
\hline
\end{tabular}
\]
\[
T_\mu^{(i)}=\begin{tabular}{|c|}
\hline
$i+1$ \\
\hline
\end{tabular}
\]
and the polynomial associated to them
$$
    a^{(i)}(y,z)=y^{i}zy^{n-i-1}.
$$
The above polynomials are not generalized identities, moreover they are linearly independent modulo the generalized graded identities of $UT_2,$ in fact if we suppose the contrary, i.e. $\sum_{i=0}^{n-1}\alpha_ia^{(i)}\equiv 0 \pmod{\IsUT(UT_2)}$ with some non-zero scalar $\alpha_i,$ then we let $t$ be the greatest integer such that $\alpha_t\neq 0$ and we substitute $y$ with $y_1+y_2.$ We get
\[
\alpha_t(y_1+y_2)^tz(y_1+y_2)^{n-t-1} + \sum_{i < t}\alpha_i(y_1+y_2)^iz(y_1+y_2)^{n-i-1} \equiv 0 \pmod{\IsUT(UT_2)}.
\]
Let us consider the homogeneous component of degree $t$ in $y_1$ and $n- t -1$ in $y_2$. If we make the substitution $y_1=e_{11},$ $y_2=e_{22}$ and $z=e_{12}$, we obtain $\alpha_te_{12}=\alpha_t=0$, a contradiction. Hence the polynomials $a^{(i)}, 0\leq i\leq n-1$ are linearly independent$\pmod{\IsUT(UT_2)}$.
Therefore $m_{\lambda,\mu} \geq n.$

In the case of $\lambda=(p+q,p)$ and $\mu=(1)$ with $p\geq 1$, for any $0\leq i\leq q,$ we consider the following two tableaux:
\[
T_\lambda^{(i)}=\begin{tabular}{|c|c|c|c|c|c|c|c|c|c|}
\hline
$i+1$ & $i+2$ & $\cdots$ & $i+p$ & 1 & $\cdots$ & $i$ & $i+2p+2$ & $\cdots$ & $n$ \\
\hline
$i+p+2$ & $i+p+3$  & $\cdots$ & $i+2p+1$ & \multicolumn{5}{c}{} \\
\cline{1-4}
\end{tabular}
\]
\[
T_\mu^{(i)}=\begin{tabular}{|c|}
\hline
$i+p+1$ \\
\hline
\end{tabular}
\]
and the polynomials associated with them:
$$
    b_{p,q}^{(i)}(y_1,y_2,z)=y_1^i\underbrace{\overline{y}_1\cdots \tilde{y}_1}_p z \underbrace{\overline{y}_2 \cdots \tilde{y}_2}_py_1^{q-i}
    $$
Since the previous highest weight vectors are exactly the same that arise in the computation of the ordinary $\mathbb Z_2$-graded cocharacter of $UT_2,$ following word by word the lines of \cite[Theorem 3]{VA}, it can be proven that these polynomials are linearly independent modulo $\IsUT(UT_2),$ so $m_{\lambda,\mu} \geq q+1.$

By \eqref{spezzetaton-11}, \eqref{gcrn-r} and \eqref{Rizzo}, we have proved that
$$
2^{n-1} = c_{n-1,1}^{\mathbb{Z}_2, UT_2}(UT_2)\geq nd_{(n-1)}d_{(1)}+\sum_{\substack{1 \leq p \leq \lfloor \frac{n-1}{2} \rfloor \\ 0 \leq q \leq n-2p-1}} (q+1) d_{(p+q,p)}d_{(1)} = n  +  2^{n-1}-n+1-1 = 2^{n-1},
$$
thus $m_{(n-1),(1)} = n$ and $m_{(p+q,p), (1)}= q+1.$ The proof is now complete.
\end{proof}

Combining the previous lemmas, we obtain the following result.

\begin{theorem}\label{cocarattere}
    Let $UT_2$ be the $\mathbb{Z}_2$-graded $W$-algebra of upper $2 \times 2$ triangular matrices over a field $F$ of characteristic zero endowed with the canonical grading where $W$ acts as the full algebra $UT_2$ by left and right multiplication. Let $0\leq r \leq n$ and let
    \[
    \chi_{r,n-r}^{\mathbb{Z}_2,W}(UT_2)=\sum_{\substack{\lambda \vdash r \\ \mu \vdash n-r}}m_{\lambda,\mu}(\chi_\lambda \otimes \chi_\mu)
    \]
    be the $(r,n-r)$th generalized graded cocharacter of $UT_2$. Then either $r=n$ and 
    \[   m_{\lambda,\mu}=\begin{cases}
         n+3 & \lambda=(n), \ \mu=\emptyset\\
         q+1 & \lambda=(p+q,p), \ \mu =\emptyset\\
     0 & \text{otherwise}
\end{cases}, \]
 or $r=n-1$ and 
\[
         m_{\lambda,\mu}=\begin{cases}
   n & \lambda=(n-1),\  \mu=(1)\\
 q+1 & \lambda=(p+q,p), \ \mu=(1) \\
     0 & \text{otherwise}
\end{cases}.
\]
\end{theorem}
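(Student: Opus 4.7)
The plan is to observe that Theorem \ref{cocarattere} is essentially a repackaging of Lemmas \ref{lemmam_1} and \ref{lemmam_2}, supplemented by the elementary observation that the graded identity $z_1 z_2 \equiv 0$ of $UT_2$ forces every multilinear generalized polynomial in two or more odd variables to vanish modulo $\IsUT(UT_2)$. Consequently $\chi^{\mathbb{Z}_2,UT_2}_{r,n-r}(UT_2) = 0$ whenever $n - r \geq 2$, and all such multiplicities $m_{\lambda,\mu}$ are zero, leaving only the cases $r = n$ and $r = n-1$ to be analyzed.

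For $r = n$ I would directly invoke Lemma \ref{lemmam_1}, whose conclusion already matches the $r = n$ branch of the theorem: the constraint $h(\lambda) \leq 2$ comes from $\dim UT_2^0 = 2$; the explicit highest weight vectors $y^n$, $e_{22}y^n$, and $y^i e_{12} y^{n-i}$ for $\lambda = (n)$, together with the alternating commutator-type polynomials $c^{(i)}_{p,q}$ of \eqref{eq4} for $\lambda = (p+q,p)$, are shown linearly independent modulo $\IsUT(UT_2)$ by evaluating at $y = \delta e_{11} + e_{22}$ and invoking a Vandermonde argument; and the counting identity \eqref{Rizzo}, combined with $c^{\mathbb{Z}_2,UT_2}_{n,0}(UT_2) = 2^n+2$ from Theorem \ref{Tideale}, forces the lower bounds $n+3$ and $q+1$ to be equalities.

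For $r = n-1$ I would symmetrically invoke Lemma \ref{lemmam_2}: the $n$ highest weight vectors $y^i z y^{n-i-1}$ for $(\lambda,\mu) = ((n-1),(1))$ are shown independent by isolating a top-degree homogeneous component and evaluating at matrix units, while the $q+1$ alternating highest weight vectors $b^{(i)}_{p,q}$ for $\lambda=(p+q,p)$, $\mu=(1)$ coincide with those already appearing in the computation of the ordinary $\mathbb{Z}_2$-graded cocharacter of $UT_2$ in \cite{VA}, and so inherit their linear independence. Matching the total contribution against $c^{\mathbb{Z}_2,UT_2}_{n-1,1}(UT_2) = 2^{n-1}$, again via \eqref{Rizzo}, yields the asserted equalities.

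The heavy lifting has therefore already been done in the two lemmas; no further obstacle remains, since the theorem is obtained by simply merging the two cases of $r$ with the one-line vanishing argument for $n-r \geq 2$. If there were any subtlety, it would lie in checking that the single cell attached to $\mu = (1)$ is correctly placed relative to the alternating tableau for $\lambda$ in the $r = n-1$ case, but this is exactly what the placement of the box $i+p+1$ in the tableau $T^{(i)}_\mu$ of Lemma \ref{lemmam_2} arranges.
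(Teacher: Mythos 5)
Your proposal is correct and follows exactly the paper's own argument: the identity $z_1z_2\equiv 0$ forces $m_{\lambda,\mu}=0$ whenever $\mu\vdash k$ with $k\geq 2$, and the two remaining cases $r=n$ and $r=n-1$ are precisely Lemmas \ref{lemmam_1} and \ref{lemmam_2}. Your recap of the internal mechanics of those lemmas (Vandermonde evaluations, the counting identity \eqref{Rizzo}, and the comparison with the codimensions from Theorem \ref{Tideale}) is accurate but not needed beyond citing them.
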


\begin{proof}
    Notice that $z_1z_2\equiv 0,$ therefore $m_{\lambda, \mu}=0$ as long as $\mu\vdash k$ with $k\geq 2.$ Now the proof follows directly from Lemmas \ref{lemmam_1} and \ref{lemmam_2}.
\end{proof}

Now with similar techniques, we can also compute the $(r,n-r)$th graded generalized cocharacter of $UT_2^D$, i.e., $UT_2$ with canonical grading when $\Phi(W) \cong D=Fe_{11}\oplus Fe_{22}$.

\begin{theorem}
\label{TheoremD}
Let $UT_2^D$ be the $\mathbb{Z}_2$-graded $W$-algebra of upper $2 \times 2$ triangular matrices over a field $F$ of characteristic zero endowed with the canonical grading where $W$ acts as the algebra $D$ by left and right multiplication. Let  $0\leq r \leq n$ and let
    \[
    \chi_{r,n-r}^{\mathbb{Z}_2,W}(UT_2^D)=\sum_{\substack{\lambda \vdash r \\ \mu \vdash n-r}}m_{\lambda,\mu}(\chi_\lambda \otimes \chi_\mu)
    \]
    be the $(r,n-r)$th generalized graded cocharacter of $UT_2^D$. Then either $r=n$ and
 \[
         m_{\lambda,\mu}=\begin{cases}
         2 & \lambda=(n), \ \mu=\emptyset\\
     0 & \text{otherwise}
\end{cases},
\]
or $r=n-1$ and 
    \[
         m_{\lambda,\mu}=\begin{cases}
   n & \lambda=(n-1), \ \mu=(1)\\
 q+1 & \lambda=(p+q,p), \ \mu=(1) \\
     0 & \text{otherwise}
\end{cases}.
\]
\end{theorem}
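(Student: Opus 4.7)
The plan is to adapt the structure of the proof of Theorem~\ref{cocarattere} (Lemmas~\ref{lemmam_1}~and~\ref{lemmam_2}), exploiting the explicit set of generating identities from Theorem~\ref{TheoremUT_2^D} and the fact that many monomials that were $W$-nontrivial in the $UT_2$-action case now vanish because the generalized polynomials of $UT_2^D$ can involve at most the coefficient $w_1$ (corresponding to $e_{22}$). As preliminary reductions, the identity $z_1z_2\equiv 0$ gives $m_{\lambda,\mu}=0$ whenever $|\mu|\geq 2$, leaving only $r=n$ (with $\mu=\emptyset$) and $r=n-1$ (with $\mu=(1)$), and $\dim UT_2^0=2$ forces $h(\lambda)\leq 2$.

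For the case $r=n$, I would use the identities $[y_1,y_2]$, $w_1y-yw_1$, and $w_iy$, $yw_i$ for $i\geq 2$ (from Theorem~\ref{TheoremUT_2^D}) to reduce any monomial of $P_{n,0}^{\mathbb{Z}_2,W}$ modulo $\gids(UT_2^D)$ to one of the two polynomials
\[
y_1\cdots y_n,\qquad w_1y_1\cdots y_n,
\]
using $w_1^2=w_1$ (since $e_{22}^2=e_{22}$ and these reductions lift through $\Phi$) to collapse repeated occurrences of $w_1$. Both polynomials are symmetric in $y_1,\ldots,y_n$, so only $\lambda=(n)$ can contribute; their $F$-linear independence modulo $\gids(UT_2^D)$ follows by evaluating $y_i\mapsto 1_{UT_2}$, giving $1_{UT_2}$ and $e_{22}$ respectively. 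This yields $m_{(n),\emptyset}=2$ and matches $c_{n,0}^{\mathbb{Z}_2,W}(UT_2^D)=2$.

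For the case $r=n-1$, I would reuse exactly the same highest weight vectors $a^{(i)}(y,z)=y^izy^{n-i-1}$ (for $\lambda=(n-1)$, $0\leq i\leq n-1$) and $b_{p,q}^{(i)}(y_1,y_2,z)$ (for $\lambda=(p+q,p)$ with $p\geq 1$, $2p+q=n-1$, $0\leq i\leq q$) introduced in Lemma~\ref{lemmam_2}. The crucial observation is that none of these polynomials involves any element of $W$: they live in $F\langle Y,Z\rangle^{\mathbb{Z}_2}\subseteq W\langle X\rangle^{\mathbb{Z}_2}$, so their evaluations on $UT_2$ are insensitive to the specific $W$-action, and the Vandermonde/evaluation arguments already given in Lemma~\ref{lemmam_2} carry over verbatim. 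This gives the lower bounds $m_{(n-1),(1)}\geq n$ and $m_{(p+q,p),(1)}\geq q+1$.

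To close the argument, I would compute $c_{n-1,1}^{\mathbb{Z}_2,W}(UT_2^D)=2^{n-1}$. One way is direct: the identities in Theorem~\ref{TheoremUT_2^D} imply that no $w_i$ with $i\geq 2$ can appear, while any occurrence of $w_1$ is either killed (when it meets $z$ from the left, via $w_1z$) or absorbed (when it meets $z$ from the right, via $zw_1-z$), using $w_1y=yw_1$ to commute it into position; thus $P_{n-1,1}^{\mathbb{Z}_2,W}/(\cdot\cap\gids(UT_2^D))$ is spanned by $y_{i_1}\cdots y_{i_k}zy_{j_1}\cdots y_{j_{n-1-k}}$, yielding $\sum_{k=0}^{n-1}\binom{n-1}{k}=2^{n-1}$. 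Alternatively, Proposition~\ref{relg} and $\gcns(UT_2^D)=n2^{n-1}+2$ together with $c_{n,0}^{\mathbb{Z}_2,W}(UT_2^D)=2$ force $c_{n-1,1}^{\mathbb{Z}_2,W}(UT_2^D)=2^{n-1}$. By \eqref{Rizzo} applied to $n-1$,
\[
n\cdot d_{(n-1)}d_{(1)}+\sum_{\substack{p\geq 1\\ 2p+q=n-1}}(q+1)d_{(p+q,p)}d_{(1)}=n+(2^{n-1}-n)=2^{n-1},
\]
so all previous inequalities are equalities, establishing the claimed multiplicities. The only delicate point — essentially the whole novelty of the statement — is the $r=n$ case, where verifying that no highest weight vectors survive beyond $\lambda=(n)$ with multiplicity $2$ requires carefully exploiting that the $e_{12}$-based polynomials from Lemma~\ref{lemmam_1} are no longer available once $\Phi(W)\cong D$.
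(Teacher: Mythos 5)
Your proposal is correct and follows essentially the route the paper intends: the paper omits the proof of Theorem~\ref{TheoremD}, stating only that it follows by the same techniques as Theorem~\ref{cocarattere}, and your argument is precisely that adaptation (reusing the highest weight vectors of Lemma~\ref{lemmam_2} for $r=n-1$, reducing the $r=n$ case to the two symmetric polynomials $y_1\cdots y_n$ and $w_1y_1\cdots y_n$, and closing the count via Proposition~\ref{relg}, $\gcns(UT_2^D)=n2^{n-1}+2$, and \eqref{Rizzo}). All the individual steps check out, including the key observation that the $e_{12}$-based highest weight vectors of Lemma~\ref{lemmam_1} are unavailable once $\Phi(W)\cong D$.
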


Analogously, we can also compute the $(r,n-r)$th graded generalized cocharacter of $\mathbb{Z}_2$-graded $W$-algebra $UT_2^C$, i.e., $UT_2$ with canonical grading when $\Phi(W) \cong C=F1_{UT_2}+ Fe_{12}$.

\begin{theorem}
Let $UT_2^C$ be the $\mathbb{Z}_2$-graded $W$-algebra of upper $2 \times 2$ triangular matrices over a field $F$ of characteristic zero endowed with the canonical grading where $W$ acts as the algebra $C$ by left and right multiplication. Let $0\leq r \leq n$ and let
    \[
    \chi_{r,n-r}^{\mathbb{Z}_2,W}(UT_2^C)=\sum_{\substack{\lambda \vdash r \\ \mu \vdash n-r}}m_{\lambda,\mu}(\chi_\lambda \otimes \chi_\mu)
    \]
    be the $(r,n-r)$th generalized graded cocharacter of $UT_2^C$. Then either $r=n$ and
 \[
         m_{\lambda,\mu}=\begin{cases}
         n+2 & \lambda=(n), \ \mu=\emptyset\\
     0 & \text{otherwise}
\end{cases},
\]
or $r=n-1$ and 
    \[
         m_{\lambda,\mu}=\begin{cases}
   n & \lambda=(n-1), \ \mu=(1)\\
 q+1 & \lambda=(p+q,p), \ \mu=(1) \\
     0 & \text{otherwise}
\end{cases}.
\]
\end{theorem}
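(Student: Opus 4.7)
The plan is to mirror the framework of Theorem \ref{cocarattere} and Theorem \ref{TheoremD}, specialized to the acting subalgebra $C = F\cdot 1_{UT_2} + F\cdot e_{12}$. Two a priori vanishing conditions apply at once: the height bounds $h(\lambda)\le \dim UT_2^{0}=2$ and $h(\mu)\le \dim UT_2^{1}=1$, together with the identity $z_{1}z_{2}\equiv 0$ from Theorem \ref{AlgebraCidentità}, which sharpens the latter to $|\mu|\le 1$. Hence the analysis reduces to the two cases $(r,\mu)=(n,\emptyset)$ and $(r,\mu)=(n-1,(1))$.

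For $r=n$, I would exhibit explicit highest weight vectors attached to the single-row shape $\lambda=(n)$: namely $y^{n}$ together with the $n+1$ polynomials $y^{i}e_{12}y^{n-i}$ for $0\le i\le n$, giving $n+2$ candidates in total. Compared with Lemma \ref{lemmam_1}, the polynomial $e_{22}y^{n}$ is now unavailable because $e_{22}\notin C$, which is exactly what drops the multiplicity from $n+3$ down to $n+2$. Linear independence modulo $\gids(UT_2^{C})$ follows from the Vandermonde substitution $y\mapsto \delta e_{11}+e_{22}$ used in Lemma \ref{lemmam_1}: first evaluate at $y=1_{UT_2}$ to kill the coefficient of $y^{n}$, then vary $\delta\in F$ to force the remaining coefficients of $y^{i}e_{12}y^{n-i}$ to vanish one by one.

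For $r=n-1$, the argument of Lemma \ref{lemmam_2} transfers without change: the highest weight vectors $a^{(i)}(y,z)=y^{i}zy^{n-i-1}$ and $b^{(i)}_{p,q}(y_{1},y_{2},z)$ involve no multiplier other than $z$ itself, in particular no $e_{22}$, so they remain well-defined $W$-polynomials over $C$ and the evaluations $y_{1}\mapsto e_{11},\ y_{2}\mapsto e_{22},\ z\mapsto e_{12}$ (together with the homogeneous-degree trick obtained from $y\mapsto y_{1}+y_{2}$) prove their linear independence exactly as before. This yields the lower bounds $m_{(n-1),(1)}\ge n$ and $m_{(p+q,p),(1)}\ge q+1$.

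The closing step is a dimension match. Using \eqref{gcrn-r}, identity \eqref{Rizzo}, and the codimension formula $\gcns(UT_2^{C})=2^{n-1}(n+2)+1$ from Theorem \ref{AlgebraCidentità} together with Proposition \ref{relg}, one checks that the lower bounds produced above already saturate the total dimensions $c_{n,0}^{\mathbb{Z}_{2},W}(UT_{2}^{C})$ and $c_{n-1,1}^{\mathbb{Z}_{2},W}(UT_{2}^{C})$, so that all other multiplicities are forced to vanish. The delicate point in this program, and the one I expect to be the main obstacle, is precisely this bookkeeping: one must carefully reconcile the disappearance of the $e_{22}$-based highest weight vector in the $r=n$ case with the resulting shift in the codimension formula, and verify that no antisymmetric combination in the $y$-variables (such as $y_{1}e_{12}y_{2}-y_{2}e_{12}y_{1}$) escapes the count and produces an additional multiplicity at a two-row shape.
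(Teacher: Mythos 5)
Your setup is the natural one and matches what the paper intends (the paper gives no written proof of this theorem, only the remark that it is obtained ``analogously'' to the $UT_2$ and $UT_2^D$ cases), but the closing dimension match you defer to is exactly where the argument breaks, and the worry you flag in your final sentence is not a technicality --- it is decisive. The two-row highest weight vectors of Lemma \ref{lemmam_1},
\[
c^{(i)}_{p,q}(y_1,y_2)=y_1^i\underbrace{\overline{y}_1\cdots\tilde{y}_1}_{p-1}(y_1e_{12}y_2-y_2e_{12}y_1)\underbrace{\overline{y}_2\cdots \tilde{y}_2}_{p-1}y_1^{q-i},
\]
involve no multiplier other than $e_{12}$, which lies in $C$. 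Hence they are legitimate elements of $P^{\mathbb{Z}_2,W}_{n,0}$ for the $C$-action, and the very same evaluations used in Lemma \ref{lemmam_1} ($y_1=\delta e_{11}+e_{22}$, $y_2=e_{11}$, Vandermonde in $\delta$) show that they are not identities of $UT_2^C$ and are linearly independent modulo $\gids(UT_2^C)$; already for $n=2$ the polynomial $y_1e_{12}y_2-y_2e_{12}y_1$ evaluates to $e_{12}\neq 0$ at $y_1=e_{11}$, $y_2=e_{22}$. Consequently $m_{(p+q,p),\emptyset}\geq q+1$, and your claimed saturation at $r=n$ fails: your lower bounds account for only $n+2$ of the $c^{\mathbb{Z}_2,W}_{n,0}(UT_2^C)=2^n+1$ dimensions, whereas adding the two-row contributions gives, by \eqref{Rizzo}, exactly $(n+2)+(2^n-n-1)=2^n+1$.

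The conclusion is that the statement itself, not your strategy, is at fault: the correct multiplicities at $r=n$ are $m_{(n),\emptyset}=n+2$ \emph{and} $m_{(p+q,p),\emptyset}=q+1$. As written, the theorem is inconsistent with Theorem \ref{AlgebraCidentità}: its multiplicities would force $c^{\mathbb{Z}_2,W}_{n,0}(UT_2^C)=n+2$ and hence $\gcns(UT_2^C)=(n+2)+n2^{n-1}$, which differs from the stated value $2^{n-1}(n+2)+1$ by $2^n-n-1>0$ for all $n\geq 2$. Your $r=n-1$ analysis is correct and does saturate $c^{\mathbb{Z}_2,W}_{n-1,1}(UT_2^C)=2^{n-1}$, and your observation that only the vector $e_{22}y^n$ is lost in passing from the full action to the $C$-action is the right one. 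Once the missing row $m_{\lambda,\mu}=q+1$ for $\lambda=(p+q,p)$, $\mu=\emptyset$ is restored to the $r=n$ case, your scheme (explicit highest weight vectors for the lower bounds, then the count through Proposition \ref{relg} and \eqref{gcrn-r}) closes exactly as in Lemma \ref{lemmam_1}.
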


Finally, for the sake of completeness, we also state the result for $UT_2^F$, i.e., $UT_2$ with the canonical grading and with $\Phi(W)\cong F$, whose proof can be found in \cite{VA}.

\begin{theorem}
Let $UT_2^F$ be the $\mathbb{Z}_2$-graded $W$-algebra of upper $2 \times 2$ triangular matrices over a field $F$ of characteristic zero endowed with the canonical grading where $W$ acts as the algebra $F$ by left and right multiplication. Let $0\leq r \leq n$ and let
    \[
    \chi_{r,n-r}^{\mathbb{Z}_2,W}(UT_2^F)=\sum_{\substack{\lambda \vdash r \\ \mu \vdash n-r}}m_{\lambda,\mu}(\chi_\lambda \otimes \chi_\mu)
    \]
    be the $(r,n-r)$th generalized graded cocharacter of $UT_2^F$. Then either $r=n$ and
 \[
         m_{\lambda,\mu}=\begin{cases}
         1 & \lambda=(n), \ \mu=\emptyset\\
     0 & \text{otherwise}
\end{cases},
\]
or $r=n-1$ and 
    \[
         m_{\lambda,\mu}=\begin{cases}
   n & \lambda=(n-1), \ \mu=(1)\\
 q+1 & \lambda=(p+q,p), \ \mu=(1) \\
     0 & \text{otherwise}
\end{cases}.
\]
\end{theorem}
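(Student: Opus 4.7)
The plan is to adapt the approach of Theorems~\ref{cocarattere} and \ref{TheoremD} to the simpler setting in which $\Phi(W)\cong F$, so that only $1_W$ acts nontrivially (as the identity). No scalar from $W$ can appear inside a monomial modulo the identities listed in Theorem~\ref{TheoremUT2F}, and hence the computation reduces to the ordinary $\mathbb{Z}_2$-graded cocharacter analysis of $UT_2$.

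First I would observe, using Theorem~\ref{TheoremUT2F}, that $z_1z_2\in\gids(UT_2^F)$, so $m_{\lambda,\mu}=0$ whenever $|\mu|\geq 2$; hence only the cases $r=n$ (with $\mu=\emptyset$) and $r=n-1$ (with $\mu=(1)$) can contribute. Moreover $\dim (UT_2)^0=2$ and $\dim (UT_2)^1=1$ force $h(\lambda)\leq 2$ and $h(\mu)\leq 1$. For $r=n$, the identity $[y_1,y_2]\equiv 0$ together with the absence of any nontrivial $W$-constants (which in Theorem~\ref{cocarattere} played the role of separators between alternating blocks via $e_{12}$) leaves $y_1\cdots y_n$ as the unique multilinear monomial modulo $\gids(UT_2^F)$. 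Hence $c^{\mathbb{Z}_2,W}_{n,0}(UT_2^F)=1$, and since $y^n$ is not an identity, the only surviving multiplicity is $m_{(n),\emptyset}=1$. In particular every candidate highest weight vector for a shape $(p+q,p)$ with $p\geq 1$ collapses, because without a separating scalar one cannot form a nontrivial alternator analogous to $y_1e_{12}y_2-y_2e_{12}y_1$ of Lemma~\ref{lemmam_1}.

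For $r=n-1$, I would repeat the constructions of Lemma~\ref{lemmam_2} verbatim. For $\lambda=(n-1)$, $\mu=(1)$, the polynomials
\[
a^{(i)}(y,z)=y^{i}zy^{n-1-i},\quad 0\leq i\leq n-1,
\]
provide $n$ highest weight vectors; linear independence modulo $\gids(UT_2^F)$ is shown by homogenizing $y\mapsto y_1+y_2$ and evaluating $y_1=e_{11}$, $y_2=e_{22}$, $z=e_{12}$ on the largest surviving component. For $\lambda=(p+q,p)$ with $p\geq 1$ and $2p+q=n-1$, $\mu=(1)$, the polynomials $b^{(i)}_{p,q}(y_1,y_2,z)$ of Lemma~\ref{lemmam_2} give $q+1$ highest weight vectors, linearly independent by the same Vandermonde argument; these are exactly the polynomials that also appear in the ordinary $\mathbb{Z}_2$-graded cocharacter of $UT_2$ treated in \cite{VA}.

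Finally, to upgrade the lower bounds to equalities, I would combine Theorem~\ref{TheoremUT2F} (which yields $c^{\mathbb{Z}_2,W}_{n-1,1}(UT_2^F)=2^{n-1}$ via Proposition~\ref{relg}), formula~\eqref{gcrn-r}, and \eqref{Rizzo} applied with parameter $n-1$:
\[
n\,d_{(n-1)}d_{(1)}+\sum_{\substack{1\leq p\leq\lfloor (n-1)/2\rfloor\\ 0\leq q\leq n-1-2p}}(q+1)\,d_{(p+q,p)}d_{(1)}=n+(2^{n-1}-n)=2^{n-1},
\]
which forces every inequality obtained above to be an equality. The main obstacle is the rigorous verification that in the $r=n$ case no two-row highest weight vector survives modulo $\gids(UT_2^F)$; once this rigidity is established, the rest is a straightforward bookkeeping argument parallel to the one already carried out for Theorems~\ref{cocarattere} and \ref{TheoremD}.
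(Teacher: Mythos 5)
Your argument is correct, but it is not the route the paper takes: the paper gives no proof at all for this case, observing instead that when $\Phi(W)\cong F$ the generalized graded identities of $UT_2^F$ are exactly the ordinary $\mathbb{Z}_2$-graded identities of $UT_2$, so the cocharacter decomposition is quoted directly from Valenti \cite{VA}. What you do instead is rerun the machinery of Lemmas~\ref{lemmam_1} and~\ref{lemmam_2} in the degenerate setting: for $r=n$ the count is immediate since $c^{\mathbb{Z}_2,W}_{n,0}(UT_2^F)=1=\sum_\lambda m_{\lambda,\emptyset}d_\lambda$ together with $m_{(n),\emptyset}\geq 1$ already forces the answer (your remark that two-row shapes die because $[y_1,y_2]\equiv 0$ leaves no separator is a correct, if redundant, sanity check); for $r=n-1$ the highest weight vectors $a^{(i)}$ and $b^{(i)}_{p,q}$ contain no $W$-constants, so the independence arguments of Lemma~\ref{lemmam_2} carry over verbatim, and the closing count $n+(2^{n-1}-n)=2^{n-1}=c^{\mathbb{Z}_2,W}_{n-1,1}(UT_2^F)$ via \eqref{gcrn-r}, \eqref{Rizzo} and Proposition~\ref{relg} upgrades the lower bounds to equalities exactly as in the paper's other cases. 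The trade-off is the usual one: the paper's citation is shorter and makes the conceptual point that the $F$-action case is nothing new, while your version is self-contained and makes visible that the same Vandermonde/evaluation scheme handles all four actions uniformly. The one step you flag as the "main obstacle" (no two-row vector survives when $r=n$) is in fact the easiest part, settled by the codimension identity $c^{\mathbb{Z}_2,W}_{n,0}(UT_2^F)=1$ alone.
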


\section{On almost polynomial growth}
In this section we deal with generalized varieties of superalgebras of almost polynomial growth (APG). To this end, recall that a variety of superalgebras $\mathcal{V}$ is said to be of almost polynomial growth of the codimensions, if $\gcn(\mathcal{V})$ grows exponentially, but for any proper subvariety $\mathcal{U}\subsetneq\mathcal V,$ $\gcn(\mathcal{U})$ grows polynomially.

Concerning this topic, given a class of algebras (associative, non-associative, with additional structure, etc), a central problem in the theory is twofold: on one hand, to construct algebras that are not PI-equivalent but still generate APG varieties; on the other hand, to classify, up to equivalence, all varieties exhibiting almost polynomial growth. For instance, a celebrated theorem by Kemer states that in characteristic zero up to equivalence, the only APG varieties of ordinary algebras are the ones generated by $UT_2$ and by the infinite dimensional Grassmann algebra $E$ (see \cite{Kemer1979}). Following this approach, in \cite{MR} it was proved that the generalized varieties of ordinary algebras generated by $UT_2^F$ and $UT_2^D$ are APG, while in \cite{MR2} the authors showed that up to equivalence, these are the only generalized APG varieties generated by finite dimensional algebras.

Motivated by the above results, in this section we aim to prove that in case of superalgebras, the generalized varieties generated by $UT_2^F$ and by $UT_2^D,$ both endowed with the canonical grading, are APG.

\begin{remark}
\label{remarkutd}
Recall that by Theorem \ref{TheoremD}, if $\lambda=(n)$ and $\mu=\emptyset$ then the linearly independent graded generalized highest weight vectors corresponding to $\lambda$ and $\mu$ are
    \[
    y^n \quad \text{ and } \quad
    w_1y^n,
    \]
    where $w_1$ acted on $UT_2$ as the left and right multiplication by $e_{22}.$ Moreover, if $\lambda=(n-1)$ and $\mu=(1)$ then the linearly independent graded generalized highest weight vectors are
    \[
    y^izy^{n-i-1}, \quad 0\leq i \leq n-1.
    \]
   Finally, for all $p\geq 1$, $q\geq 0$, $2p+q=n-1$, if $\lambda=(p+q,p)$ and $\mu=(1),$ the linearly independent graded generalized highest weight vectors corresponding to $\lambda$ and $\mu$ are
    \[
    y_1^i\underbrace{\overline{y}_1\cdots \tilde{y}_1}_p z \underbrace{\overline{y}_2 \cdots \tilde{y}_2}_py_1^{q-i}, \quad 0\leq i\leq q.
    \]
\end{remark}
\begin{theorem}
\label{polynomialgrotwD}
    $\VsupW (UT_2^D)$ has almost polynomial growth of the codimensions.
\end{theorem}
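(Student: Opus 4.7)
The theorem consists of two statements: $\gcns(UT_2^D)$ grows exponentially, and every proper subvariety of $\VsupW(UT_2^D)$ is polynomially bounded. The first is immediate from Theorem~\ref{TheoremUT_2^D}, which gives $\gcns(UT_2^D)=n2^{n-1}+2$, of exponential growth rate~$2^n$.

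For the second, let $\mathcal{U}\subsetneq\VsupW(UT_2^D)$, so that $\IsupW(\mathcal{U})\supsetneq\IsupW(UT_2^D)$. Since $\cher F=0$, I can pick a multilinear $f\in P^{\mathbb{Z}_2,W}_{r_0,n_0-r_0}$ with $f\in\IsupW(\mathcal{U})\setminus\IsupW(UT_2^D)$. Exploiting the $S_{r_0}\times S_{n_0-r_0}$-invariance of $T_{\mathbb{Z}_2,W}$-ideals, I decompose $f$ into its isotypic components and pass to one, of shape $(\lambda_0,\mu_0)$, that still fails to be an identity on $UT_2^D$. By Theorem~\ref{TheoremD} and Remark~\ref{remarkutd}, the three possibilities for $(\lambda_0,\mu_0)$ are $((n_0),\emptyset)$ with representatives $y^{n_0}$ and $w_1y^{n_0}$; $((n_0-1),(1))$ with representatives $y^{i}zy^{n_0-1-i}$; or $((p_0+q_0,p_0),(1))$ for some $p_0\geq 1$ with the explicit alternating representatives from Remark~\ref{remarkutd}.

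In each case I show that the cocharacter of $\mathcal{U}$ becomes polynomially controlled. For $((n_0),\emptyset)$, combining $f$ with the base identities $[y_1,y_2]\equiv 0$, $w_1y-yw_1\equiv 0$, and $z_1z_2\equiv 0$ from Theorem~\ref{TheoremUT_2^D}, together with a Wedderburn--Malcev analysis using $A^1\cdot A^1\subseteq A^0$, yields that every finite-dimensional $A\in\mathcal{U}$ is nilpotent, hence $\gcns(\mathcal{U})=0$ for $n$ large. In the other two cases, iterating the $T_{\mathbb{Z}_2,W}$-closure (multiplication by $y$ on either side, substitution of $W$-multipliers, and Pieri-type branching on the $S_n$-isotypic components) propagates $f$ to every degree $n\geq n_0$ and forces $m^{\mathcal{U}}_{(p+q,p),(1)}=0$ for all $p\geq p^{\ast}$, where $p^{\ast}$ depends only on $f$. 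Since $d_{(p+q,p)}$ is polynomial in $n$ for each fixed $p$ and only finitely many $p$ survive, formula~\eqref{gcrn-r} combined with the polynomial contributions coming from the shapes $((n),\emptyset)$ and $((n-1),(1))$ yields $\gcns(\mathcal{U})=O(n^{k})$ for some $k$.

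The main obstacle is the propagation step in the last two cases: passing from a single multilinear highest-weight identity of $\mathcal{U}$ to the vanishing of an infinite family of two-row multiplicities in all sufficiently large degrees. This is the graded $W$-counterpart of the argument from the ungraded setting treated in~\cite{MR}, and it must be carried out by carefully tracking the interplay between Young symmetrizers on the graded variables and substitutions of $W$-multipliers, exploiting the explicit form $y_1^{i}(\overline{y}_1\cdots\tilde{y}_1)z(\overline{y}_2\cdots\tilde{y}_2)y_1^{q-i}$ of the highest weight vectors recorded in Remark~\ref{remarkutd}.
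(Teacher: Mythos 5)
Your overall architecture coincides with the paper's: pass to a highest-weight identity of the proper subvariety that is not an identity of $UT_2^D$, split according to the three shapes of Remark~\ref{remarkutd}, and conclude from the two-row structure of the cocharacter via~\eqref{gcrn-r}. Your handling of the shapes $((n_0-1),(1))$ and $((p_0+q_0,p_0),(1))$ (bounding the number of surviving two-row multiplicities and deferring the propagation to the ordinary graded argument of \cite{VA}) is essentially what the paper does. The problem is your case $(\lambda_0,\mu_0)=((n_0),\emptyset)$. There the relation you obtain is $\alpha_1 y^{n_0}+\alpha_2 w_1y^{n_0}\equiv 0 \pmod{\IsupW(A)}$ with $(\alpha_1,\alpha_2)\neq(0,0)$, and this does \emph{not} imply that the algebras of $\mathcal U$ are nilpotent, nor that $\gcns(\mathcal U)$ is eventually zero. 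When $\alpha_1=0$ the relation is $w_1y^{n_0}\equiv 0$, which carries no information about powers of elements of $A$ alone: the subvariety of $\VsupW(UT_2^D)$ obtained by adjoining the identities $w_1y\equiv 0$ and $z\equiv 0$ is a proper, nonzero subvariety (its $y$-only part reduces to consequences of $[y_1,y_2]$, so $y^n\not\equiv 0$), it exhibits exactly such a drop at $((1),\emptyset)$, and it is the variety of commutative, trivially graded algebras with trivial action, with $\gcns\equiv 1$. So the conclusion ``every finite-dimensional $A\in\mathcal U$ is nilpotent, hence $\gcns(\mathcal U)=0$ for $n$ large'' is false; and even granting nilpotence of all finite-dimensional members, that alone would not bound the codimensions of the whole variety.

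What is actually needed in this case — and it is the core of the paper's proof — is to propagate the $((n_0),\emptyset)$ relation into the $\mu=(1)$ components, since by Theorem~\ref{TheoremD} those are the only components carrying exponential growth (the $\mu=\emptyset$ part contributes at most $2$ to $c^{\mathbb Z_2,W}_{n,0}$ and never needs to be killed). Concretely: multiplying the relation on the left by $z$ and using $zw_1\equiv z \pmod{\IsupW(UT_2^D)}$ gives $(\alpha_1+\alpha_2)\,zy^{n_0}\equiv 0$ on $\mathcal U$, while multiplying on the right by $z$ and using $w_1y\equiv yw_1$ and $w_1z\equiv 0$ gives $\alpha_1\,y^{n_0}z\equiv 0$; since $\alpha_1,\alpha_2$ are not both zero, at least one of $zy^{n_0}\equiv 0$ or $y^{n_0}z\equiv 0$ holds in $\mathcal U$. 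These identities annihilate all but at most $n_0$ of the vectors $y^izy^{n-i-1}$ and of the vectors $y_1^i(\overline{y}_1\cdots\tilde{y}_1)z(\overline{y}_2\cdots\tilde{y}_2)y_1^{q-i}$, and force the $(p+q,p),(1)$ multiplicities to vanish for $p\geq 2n_0$; from there your final hook-formula estimate goes through. You need to replace the nilpotence claim with this (or an equivalent) argument.
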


\begin{proof}

     Let $A$ be a $\mathbb{Z}_2$-graded $W$-algebra such that $\IsupW(UT_2^D) \subsetneq \IsupW(A).$ 
    For $n\geq 1$ and $0\leq r \leq n$, denote by $m_{\lambda,\mu}(A)$  and $m_{\lambda,\mu}(UT_2^D)$ the multiplicity of the generalized graded $(r,n-r)$th cocharacter of $A$ and $UT_2^D,$ respectively, corresponding to the partitions $\lambda\vdash r$ and $\mu\vdash n-r.$ We claim that there exists $N\geq 0$ such that $m_{\lambda,\mu}(A)\leq N$ for all $\lambda\vdash r$ and $\mu\vdash n-r.$ Moreover, if $\lambda=(p+q,p)$ with $p\geq 1$ and $q\geq 0,$ then there exists $M\geq 0$ such that $m_{\lambda,\mu}(A)=0$ when $p\geq M.$

    Since $\IsupW(UT_2^D) \subsetneq \IsupW(A),$ there exist $ n_0 \geq 1,$ $0\leq  r_0 \leq n_0,$ and partitions $\lambda'\vdash r_0$ and $\mu'\vdash n_0-r_0$ such that $m_{\lambda',\mu'}(A)< m_{\lambda',\mu'}(UT_2^D).$ By Remark \ref{remarkutd}, 
    It follows that either
\begin{equation}
\label{eq: HWV y}
\alpha_1y^{n_0}+\alpha_2w_1y^{n_0}\equiv 0 \pmod{\IsupW(A)} 
\end{equation}
with $\alpha_1,\alpha_2$ not both zero, or
\begin{equation}
\label{eq: HWV lambda=n-1}
    \sum_{i=0}^{n_0-1} \beta_iy^izy^{n_0-i-1} \equiv 0 \pmod{\IsupW(A)}
\end{equation}
with $\beta_i$ not all zero, or
\begin{equation}
\label{eq: HWV (p+q,p)}
 \sum_{i=0}^{q_0} \gamma_iy_1^i\underbrace{\overline{y}_1\cdots \tilde{y}_1}_{p_0} z \underbrace{\overline{y}_2 \cdots \tilde{y}_2}_{p_0}y_1^{q_0-i} \equiv 0 \pmod{\IsupW(A)}
\end{equation}
with $\gamma_i$ not all zero, and  where $p_0\geq 1,$ $q_0\geq 0$, $2p_0+q_0=n_0-1$. 

 If \eqref{eq: HWV lambda=n-1} or \eqref{eq: HWV (p+q,p)} hold, then since such highest weight vectors are exactly the same of the superalgebra $UT_2$ in the ordinary case, the claim follows directly from the proof of \cite[Theorem 4]{VA}.

 Suppose now that \eqref{eq: HWV y} holds.
 Since $zw_1\equiv z  \pmod{\IsupW(UT_2^D)},$ for $\alpha_1\neq - \alpha_2$ from \eqref{eq: HWV y} we get that
 \begin{equation} \label{eq: alpha1 alpha2}
     zy^{n_0} \equiv 0 \pmod{\IsupW(A)}.
 \end{equation}
Moreover, since $w_1y\equiv yw_1  \pmod{\IsupW(UT_2^D)}$ and $w_1 z\equiv 0 \pmod{\IsupW(UT_2^D)},$ for $\alpha_1\neq 0$ from \eqref{eq: HWV y} we have that
 \begin{equation} \label{eq: alpha1 0}
     y^{n_0}z \equiv 0 \pmod{\IsupW(A)}.
 \end{equation}

 Notice that by Theorem \ref{TheoremD}, to reach our goal it is enough to consider only the cases when $\lambda=(n-1)$ and $\mu=(1)$ or $\lambda=(p+q,p)$ and $\mu=(1).$

 First suppose that $\lambda=(n-1)$ and $\mu=(1).$ If $n\leq n_0,$ then again by Theorem \ref{TheoremD}, $m_{\lambda, \mu}(A)\leq n
  \leq n_0$ and we have nothing to prove. 

Now, assume that $n> n_0$. If $\alpha_1\neq - \alpha_2$, then due to the identity \eqref{eq: alpha1 alpha2}, it follows that $y^izy^{n-i-1}\equiv 0  \pmod{\IsupW(A)}$ as long as $n-i-1\geq n_0.$ Therefore for all $0\leq i\leq n-n_0-1$ the polynomials $y^izy^{n-i-1}$ are $\mathbb{Z}_2$-graded $W$-identities of $A$ and, since the remaining highest weight vectors correspond to the index $i$ running in the range from $n-n_0$ to $n-1,$ the maximal number of linearly independent highest weight vectors is at most $n_0,$ i.e., $m_{\lambda, \mu}(A)\leq n_0$ for all $\lambda=(n-1),$ $\mu=(1)$ and $n\geq 2.$

Now, if $\alpha_1=-\alpha_2,$ then $\alpha_1\neq 0$ because $\alpha_1, \alpha_2$ are not both zero. Thus by the identity \eqref{eq: alpha1 0}, we get that $y^izy^{n-i-1}\equiv 0  \pmod{\IsupW(A)}$ for all $i\geq n_0$. Since the remaining highest weight vectors correspond to the index $i$ running in the range from $0$ to $n_0-1,$ again as before, the maximal number of linearly independent highest weight vectors is at most $n_0.$ Hence $m_{\lambda, \mu}(A)\leq n_0$ for all $\lambda=(n-1),$ $\mu=(1)$ and $n\geq 2.$

We now focus our attention on the polynomials associated to partitions of the type $\lambda=(p+q,p)$ and $\mu=(1).$ First notice that, since $[y_1,y_2]\equiv 0 \pmod{\IsupW(UT_2^D)},$ if $p\geq 2n_0,$ then from either \eqref{eq: alpha1 alpha2} or \eqref{eq: alpha1 0} it follows that
$$
\underbrace{\overline{y}_1\cdots \tilde{y}_1}_p z \underbrace{\overline{y}_2 \cdots \tilde{y}_2}_p \equiv 0 \pmod{\gids(A)}
$$
because, either before or after the variable $z$, there are at least $n_0$ copies of either $y_1$ or $y_2$. Therefore $m_{\lambda, \mu}(A)=0$ when $p\geq 2n_0.$
Thus, we can assume that $p<2n_0.$ 

Now, if $q<n_0,$ then by Theorem \ref{TheoremD}, $m_{\lambda, \mu}(A)\leq q+1\leq  n_0$ and we have nothing to prove. Then assume that $q\geq n_0.$ If $\alpha_1\neq - \alpha_2$, then from \eqref{eq: alpha1 alpha2} and since $[y_1,y_2]\equiv 0 \pmod{\IsupW(UT_2^D)},$ it follows that 
$$
y_1^i\underbrace{\overline{y}_1\cdots \tilde{y}_1}_p z \underbrace{\overline{y}_2 \cdots \tilde{y}_2}_py_1^{q-i} \equiv 0 \pmod{\gids(A)}
$$ 
for all $0\leq i\leq q-n_0.$ Since the remaining highest weight vectors correspond to the index $i$ running in the range from $q-n_0+1$ to $q,$ the maximal number of linearly independent highest weight vectors is at most $n_0,$ i.e., $m_{\lambda, \mu}(A)\leq n_0$ for all $\lambda=(p+q,p),$ $\mu=(1)$ and $p\geq 1$, $q\geq 0$.

Now, if $\alpha_1=-\alpha_2,$ then $\alpha_1\neq 0$ because $\alpha_1, \alpha_2$ are not both zero. Thus by using \eqref{eq: alpha1 0} and $[y_1,y_2]\equiv 0 \pmod{\IsupW(UT_2^D)},$ we get as above that $m_{\lambda, \mu}(A)\leq n_0 $ for all $\lambda=(p+q,p),$ $\mu=(1)$ and $p\geq 1$, $q\geq 0$.

In conclusion, in all cases we have proved that $m_{\lambda,\mu}(A) \leq N= n_0$ for all partitions $\lambda,$ $\mu,$ and $m_{\lambda,\mu}(A)=0$ for all $\lambda$ with more than $M=2n_0$ boxes below the first row, as claimed. Therefore
		$$
		\chi_{r,n-r}^{\mathbb{Z}_2, W}(A) = \sum_{\substack{\lambda\vdash r,\, \mu\vdash n-r\\ r-\lambda_1 \leq M-1\\ n-r \leq 1}} m_{\lambda,\mu}(A) \,(\chi_\lambda \otimes \chi_\mu)
		$$
		where $\lambda_1$ stands for the number of boxes of the first row of $\lambda.$
		
		Since $r-\lambda_1\leq M-1,$ then $\lambda_1\geq r-(M-1) $ and by the hook formula we get 
		$$
		d_\lambda = \chi_\lambda(1) \leq \frac{n!}{(r-(M-1))!}\leq n^{M-1} .
		$$
        Thus
        \begin{align*}
            c^{\mathbb{Z}_2, W}_n(A)&=\sum_{r=0}^n \binom{n}{r} \chi_{r,n-r}^{\mathbb{Z}_2, W}(A)(1)= \sum_{r=0}^n \binom{n}{r} 
        \sum_{\substack{\lambda\vdash r,\, \mu\vdash n-r\\ r-\lambda_1 \leq M-1\\ n-r \leq 1}} m_{\lambda,\mu}d_\lambda d_\mu \\
            &=n \sum_{\substack{\lambda\vdash n-1,\\ n-\lambda_1 \leq M\\ \mu= (1)}} m_{\lambda,\mu}d_\lambda d_\mu  + 2\leq n \sum_{\substack{\lambda\vdash n-1,\\ n-\lambda_1 \leq M\\ \mu= (1)}}  N n^{M-1} +2\leq 2(M-1)^2 N n^{M}+2.
        \end{align*}
The latter inequality holds since the number of partitions $\lambda\vdash n-1$ such that $n - \lambda_1 \leq M$ is bounded by $(M-1)^2$. Therefore $c^{\mathbb{Z}_2, W}_n(A)$ is polynomially bounded and $\VsupW(UT_2^D)$ has almost polynomial growth of the codimensions, as required.
\end{proof}

With respect to the superalgebra $UT_2^F,$ recall that in this case $\Phi(W)\cong F,$ therefore we are dealing with the ordinary graded identities of $UT_2$ and by \cite[Theorem 4]{VA} we have the following.
\begin{theorem}
\label{polynomialgrowtF}
    The variety of $\mathbb{Z}_2$-graded $W$-algebras generated by $UT_2^F$ has almost polynomial growth.
\end{theorem}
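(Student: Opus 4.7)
The plan is to reduce Theorem \ref{polynomialgrowtF} to the known result \cite[Theorem 4]{VA} concerning the ordinary $\mathbb{Z}_2$-graded variety generated by $UT_2$. The key observation is that, since $\Phi(W)\cong F$, Theorem \ref{TheoremUT2F} shows that $\IsupW(UT_2^F)$ contains $w_iy$, $yw_i$, $w_iz$, $zw_i$ for every $i\geq 1$. Consequently, any $\mathbb{Z}_2$-graded $W$-algebra $A$ lying in $\VsupW(UT_2^F)$ satisfies $w_ia=aw_i=0$ for every $a\in A$ and every $i\geq 1$, so the $W$-action on $A$ factors through $\Phi(W)\cong F$, i.e., $W$ acts on $A$ only by scalars.

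Next, I would observe that for any such $A$ the graded generalized codimensions coincide with the ordinary $\mathbb{Z}_2$-graded codimensions. Indeed, modulo the identities $w_ix\equiv xw_i\equiv 0$ ($i\geq 1$) the only surviving element of $\bi_W$ inside a multilinear monomial of $\gpns$ is $w_0=1_W$, and by \eqref{1Wx} a monomial with $w_0$ inserted between variables is the same as the corresponding plain monomial. Hence a basis of $\gpns/(\gpns\cap\IsupW(A))$ is obtained directly from a basis of $P_n^{\mathbb{Z}_2}/(P_n^{\mathbb{Z}_2}\cap\id(A))$, giving $\gcns(A)=c_n^{\mathbb{Z}_2}(A)$ for every $A\in\VsupW(UT_2^F)$.

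With this reduction in place, exponential growth of $\VsupW(UT_2^F)$ is immediate from $\gcns(UT_2^F)=n2^{n-1}+1$ recorded in Theorem \ref{TheoremUT2F}. For the almost polynomial part, I would take any $A$ with $\IsupW(UT_2^F)\subsetneq\IsupW(A)$; the correspondence above turns this into a proper inclusion of ordinary $\mathbb{Z}_2$-graded $T$-ideals containing $\id(UT_2)$, so the associated ordinary subvariety is properly contained in $\mathrm{var}^{\mathbb{Z}_2}(UT_2)$, whose codimensions are polynomially bounded by \cite[Theorem 4]{VA}. Combining with the equality $\gcns(A)=c_n^{\mathbb{Z}_2}(A)$ yields polynomial growth for the generalized codimensions of every proper subvariety, which concludes the proof.

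The main obstacle, and really the only genuinely new content, is making the reduction rigorous: one must verify carefully that, once $W$ is forced to act trivially on every algebra of the variety, the generalized and ordinary $\mathbb{Z}_2$-graded $T$-ideals correspond bijectively and induce equal codimension sequences. Once this routine but necessary bookkeeping is complete, the theorem inherits everything from Valenti's classical result, and no cocharacter analysis of the kind carried out for $UT_2^D$ in Theorem \ref{polynomialgrotwD} is required.
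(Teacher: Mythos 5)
Your proposal is correct and follows essentially the same route as the paper: the paper's entire argument is the one-line observation that $\Phi(W)\cong F$ reduces the generalized graded identities of $UT_2^F$ to the ordinary $\mathbb{Z}_2$-graded identities of $UT_2$, after which \cite[Theorem 4]{VA} gives the result. Your more detailed bookkeeping (showing that every $A\in\VsupW(UT_2^F)$ has trivial $W$-action and that $\gcns(A)=c_n^{\mathbb{Z}_2}(A)$) is exactly the reduction the paper leaves implicit, so there is no substantive difference in approach.
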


  As consequence of Theorems \ref{TheoremUT_2^D} and \ref{TheoremUT2F}, we have that $\gids(UT_2^F) \not\subseteq \gids(UT_2^D)$ and $\gids(UT_2^D)\not\subseteq \gids(UT_2^F)$. Thus, by Theorems \ref{polynomialgrotwD} and \ref{polynomialgrowtF} we have the following.
\begin{corollary}
    The superalgebras $UT_2^F$ and $UT_2^D$  generate two distinct varieties of $\mathbb{Z}_2$-graded $W$-algebras of almost polynomial growth.
\end{corollary}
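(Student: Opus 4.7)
My plan is to combine two ingredients that have already been established: the almost polynomial growth of each variety (Theorem \ref{polynomialgrowtF} for $UT_2^F$ and Theorem \ref{polynomialgrotwD} for $UT_2^D$) together with an argument showing that the two $T_{\mathbb{Z}_2,W}$-ideals are incomparable, hence generate distinct varieties. There are no new codimension estimates to produce, so the only real task is the separation step.

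For the separation, the strategy is to compare the explicit lists of generators given in Theorems \ref{TheoremUT2F} and \ref{TheoremUT_2^D}, and exhibit one polynomial lying in each symmetric difference. The natural candidate on the $UT_2^F$ side is $w_1 y$: according to Notation \ref{notation}, since $\Phi(W)\cong F$ on $UT_2^F$, every basis element $w_i$ with $i\ge 1$ lies in $\ker \Phi$, so $w_1 y$ vanishes identically. On $UT_2^D$, by contrast, $w_1$ corresponds to $(R_{e_{22}}, L_{e_{22}})$, and evaluating at $y=e_{22}\in (UT_2)^0$ yields $e_{22}\cdot e_{22}=e_{22}\ne 0$. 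Thus $w_1 y\in \gids(UT_2^F)\setminus \gids(UT_2^D)$.

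On the $UT_2^D$ side, the generator $z w_1-z$ is an identity: every homogeneous odd element of $UT_2$ has the form $\alpha e_{12}$, and $\alpha e_{12}\cdot e_{22}=\alpha e_{12}$, so $z w_1-z\equiv 0$ on $UT_2^D$. However, on $UT_2^F$ the same polynomial becomes $-z$, which is not identically zero on $Fe_{12}$. Hence $z w_1-z\in \gids(UT_2^D)\setminus \gids(UT_2^F)$.

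Together these two inclusions show that neither $T_{\mathbb{Z}_2,W}$-ideal contains the other, so $\VsupW(UT_2^F)\ne \VsupW(UT_2^D)$, and combined with the APG statements above the corollary follows. I do not anticipate any serious obstacle: the whole argument amounts to a one-line check against the explicit generators of the ideals of identities from the previous two theorems, so the only thing to be careful about is the role of Notation \ref{notation} in identifying which basis elements $w_i$ of $W$ act trivially in each case.
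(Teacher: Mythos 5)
Your proposal is correct and follows essentially the same route as the paper: the paper likewise deduces the incomparability $\gids(UT_2^F)\not\subseteq\gids(UT_2^D)$ and $\gids(UT_2^D)\not\subseteq\gids(UT_2^F)$ from Theorems \ref{TheoremUT2F} and \ref{TheoremUT_2^D} and then invokes Theorems \ref{polynomialgrotwD} and \ref{polynomialgrowtF}. Your explicit witnesses $w_1y$ and $zw_1-z$, checked against Notation \ref{notation}, simply make concrete what the paper leaves as an immediate consequence.
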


Finally, notice that by Theorems \ref{Tideale} and \ref{TheoremUT_2^D},  $UT_2^D \in \VsupW (UT_2)$ and $\VsupW (UT_2^D)$ grows exponentially. Moreover, by Theorems \ref{AlgebraCidentità} and \ref{TheoremUT2F}, $UT_2^F \in \VsupW (UT_2^C)$ and $\VsupW (UT_2^F)$ grows exponentially. Then it immediately follows that 
\begin{corollary}
    $\VsupW  (UT_2)$ and $\VsupW (UT_2^C)$ do not have almost polynomial growth of the codimensions.
\end{corollary}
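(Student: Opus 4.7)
The plan is to use, in each of the two cases, the standard strategy that negates almost polynomial growth: produce a proper subvariety that still has exponentially growing codimensions. By definition of almost polynomial growth (APG), if $\VsupW(UT_2)$ had APG, then every proper subvariety of it would grow polynomially; so exhibiting a single proper subvariety of exponential growth immediately closes the case. The same applies to $\VsupW(UT_2^C)$.

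For $\VsupW(UT_2)$, I would take $\VsupW(UT_2^D)$ as the proper subvariety. Containment $\VsupW(UT_2^D)\subseteq\VsupW(UT_2)$ amounts to $\gids(UT_2)\subseteq\gids(UT_2^D)$, which one checks by inspecting the explicit generators given in Theorems \ref{Tideale} and \ref{TheoremUT_2^D}: every generator of $\gids(UT_2)$ is either itself among the generators of $\gids(UT_2^D)$, or is a trivial consequence of them (in particular $e_{22}z$, corresponding to $w_1 z$, lies in $\gids(UT_2^D)$ since $w_1 z = w_1 z \cdot w_1 + w_1 z (1-w_1)$ and $w_1 z\equiv w_1 z w_1$ reduces via $w_i z$ for $i\ge 1$). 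To see the inclusion is strict, I would simply compare codimensions: by Theorems \ref{Tideale} and \ref{TheoremUT_2^D},
\[
\gcns(UT_2) = 2^{n-1}(n+2)+2 \neq n2^{n-1}+2 = \gcns(UT_2^D),
\]
so the two $T_{\mathbb{Z}_2,W}$-ideals cannot coincide. Equivalently one can exhibit a single distinguishing identity: under Notation \ref{notation}, $w_2$ acts as multiplication by $e_{12}$ in $UT_2$ but satisfies $w_2\in\ker\Phi$ in $UT_2^D$, so $w_2 y$ is an identity of $UT_2^D$ but not of $UT_2$ (evaluate at $y=1_{UT_2}$). Since by Theorem \ref{polynomialgrotwD}, $\VsupW(UT_2^D)$ grows exponentially, $\VsupW(UT_2)$ has a proper subvariety of exponential growth and hence is not APG.

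The argument for $\VsupW(UT_2^C)$ is entirely parallel, with $UT_2^F$ playing the role previously played by $UT_2^D$: containment $\VsupW(UT_2^F)\subseteq\VsupW(UT_2^C)$ follows from comparing Theorems \ref{AlgebraCidentità} and \ref{TheoremUT2F}; strictness is visible from $\gcns(UT_2^C)=2^{n-1}(n+2)+1 \neq n2^{n-1}+1=\gcns(UT_2^F)$, or equivalently from the identity $w_1 y$ (which vanishes on $UT_2^F$ since $\Phi(w_1)=0$ there, but evaluates to $e_{12}\neq 0$ on $UT_2^C$ at $y=1_{UT_2}$); and Theorem \ref{polynomialgrowtF} supplies the required exponential growth of $\VsupW(UT_2^F)$. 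No genuine obstacle is expected: the whole proof is a short definition-chase, and the only technical point to watch is the bookkeeping of how the basis elements $w_i$ of $W$ act through $\Phi$ under the two different $W$-structures on $UT_2$, so that containment and strictness are both justified from the explicit generators and codimensions tabulated in Sections 5 and 7.
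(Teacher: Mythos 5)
Your proof is correct and follows essentially the same route as the paper: the paper likewise deduces the corollary from $UT_2^D \in \VsupW(UT_2)$ and $UT_2^F \in \VsupW(UT_2^C)$ (read off from the generating sets in Theorems \ref{Tideale}, \ref{TheoremUT_2^D}, \ref{AlgebraCidentità}, \ref{TheoremUT2F}) together with the exponential growth of $\VsupW(UT_2^D)$ and $\VsupW(UT_2^F)$ from Theorems \ref{polynomialgrotwD} and \ref{polynomialgrowtF}. The only superfluous step is your manipulation of $w_1z$: it is literally one of the generators $w_iz$, $i\geq 1$, of $\gids(UT_2^D)$, so no reduction argument is needed there.
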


\end{document}